  \newcommand{\N}{\mathbb N}
  \newcommand{\Z}{\mathbb Z}
  \newcommand{\LL}{\mathrm L}
  \newcommand{\inv}{^{-1}}
  \newcommand{\B}{\mathcal B}
  \renewcommand{\H}{\mathcal H}
  \newcommand{\BH}{\B(\H)}
  \newcommand{\eps}{\varepsilon}
  \newcommand{\ph}{\varphi}
\renewcommand{\circle}{{\mathbb S^1}}
  \renewcommand{\d}{\mathrm{d}}
  \renewcommand{\leq}{\leqslant}
  \renewcommand{\geq}{\geqslant}
  \newcommand{\abs}[1]{\left\lvert #1\right\rvert}
  \newcommand{\act}{\curvearrowright}
  \newcommand{\impl}{\Rightarrow}
  \newcommand{\la}{\left\langle}
  \newcommand{\ra}{\right\rangle}
  \newcommand{\into}{\hookrightarrow}
\newcommand{\free}{\mathrm F}
\newcommand{\covol}{\mathrm{covol}}
\DeclareMathOperator{\Aut}{\mathrm{Aut}}
\DeclareMathOperator{\RR}{\mathcal{R}}
\newtheorem{thmi}{Theorem}
\newtheorem{thm}{Theorem}[section]
\newtheorem{cor}[thm]{Corollary}
\newtheorem{lem}[thm]{Lemma}
\newtheorem{prop}[thm]{Proposition}
\theoremstyle{definition}
\newtheorem{df}[thm]{Definition}
\newtheorem{rmq}[thm]{Remark}
\newtheorem*{rmqi}{Remark}
\newtheorem{exemple}[thm]{Example}
\title{Orbit full groups for locally compact groups}
\author{A. Carderi and F. Le Ma\^itre}
\begin{document}

\maketitle

\begin{abstract}
  We show that the topological rank of an orbit full group generated by an ergodic, probability measure-preserving free action of a non-discrete unimodular locally compact Polish group is two. For this, we use the existence of a cross section and show that for a locally compact Polish group, the full group generated by any dense subgroup is dense in the orbit full group of the action of the group.

  We prove that the orbit full group of a free action of a locally compact Polish group is extremely amenable if and only if the acting group is amenable, using the fact that the full group generates the von Neumann algebra of the action. 
\end{abstract}

\tableofcontents

\section*{Introduction}

The full group of a measure-preserving action of a countable group on a standard probability space $(X,\mu)$ is the group of measure preserving transformations which preserve every orbit. It is a (complete) invariant of orbit equivalence for the action and has a natural Polish topology induced by the uniform metric $d_u(S,T):=\mu(\{x\in X: S(x)\neq T(x)\})$. This group topology encodes many interesting property of the action. For example Giordano and Pestov proved in \cite{MR2311665} that if the group acts freely, then the full group is extremely amenable if and only the acting group is amenable. Another example was provided by the second named author in \cite{gentopergo}: the cost of the action is very closely related to the topological rank of the full group, that is the minimum number of generator needed to generate a dense subgroup. 

Now let $G$ be a Polish group and consider a measure-preserving $G$-action on a standard probability space $(X,\mu)$. In \cite{Carderi:2014qr}, we  initiated the study of a Polish group topology on the associated \textit{orbit full group} $$[\RR_G]=\{T\in\Aut(X,\mu): \forall x\in X, T(x)\in G\cdot x\}$$ which coincides with the uniform topology when $G$ is countable discrete group. In this work, we want to analyse orbit full groups associated to free actions of second countable locally compact groups, which most of the time we will suppose unimodular. 

This study is motivated by the fact that for actions of locally compact groups, orbit full groups are still complete invariants of orbit equivalence \cite[Thm. 3.26]{Carderi:2014qr}, so their topological properties should reflect properties of the associated equivalence relation. Moreover, these orbit full groups are better behaved since they arise naturally as unitary groups (see Sec. \ref{sec:full group unitary group}) and ‘‘preserve density'' as follows.

\begin{thmi}[{see Thm. \ref{thm:densesub}}]\label{thmi:density}
  For every measure-preserving action of a locally compact Polish group $G$ on a probability space $(X,\mu)$ and for every dense subgroup $H\subset G$, the orbit full group $[\RR_H]$ is dense in $[\RR_G]$. 
\end{thmi}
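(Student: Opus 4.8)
The plan is to reduce the statement to an approximation of the cocycle of $T$ in measure, and then to realize that approximation while respecting the bijectivity and measure-preservation constraints via a measurable matching argument. First I would reduce to the case where $H$ is countable: any dense subgroup of the second countable group $G$ contains a countable dense subgroup $H_0$ (take the group generated by a countable dense subset of $H$), and since $[\RR_{H_0}]\subseteq[\RR_H]\subseteq[\RR_G]$, density of $[\RR_{H_0}]$ forces density of $[\RR_H]$. So assume $H$ countable, so that $\RR_H$ is a countable measure-preserving equivalence relation and $[\RR_H]$ is an honest full group. Fixing a compatible left-invariant metric $d$ on $G$ and using a measurable selection theorem for the Borel action, I write $T(x)=c(x)\cdot x$ for a Borel cocycle $c\colon X\to G$. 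By the description of the topology on $[\RR_G]$ from \cite{Carderi:2014qr} (which specializes to the uniform topology in the discrete case and for which convergence is governed by convergence in measure of the cocycles), it then suffices, given $\eps,\delta>0$, to produce $S\in[\RR_H]$ whose cocycle $c_S$ satisfies $\mu(\{x : d(c_S(x),c(x))\geq\delta\})<\eps$.

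The difficulty is that $S$ must be a measure-preserving bijection with cocycle valued in the countable set $H$, and naively replacing $c(x)$ by a nearby element of $H$ pointwise destroys injectivity. I would therefore phrase the construction as a matching problem. For $x\in X$ put
\[
  \Gamma(x)=\{h\cdot x : h\in H,\ d(h,c(x))<\delta\}\subseteq H\cdot x,
\]
which is nonempty for every $x$ since $H$ is dense in $G$. Any Borel bijection $S$ of $X$ with $S(x)\in\Gamma(x)$ a.e. automatically lies in $[\RR_H]$, is automatically measure-preserving (each piece $\{c_S=h\}$ is moved by the measure-preserving $T_h$ and the pieces tile $X$ because $S$ is a bijection), and has cocycle within $\delta$ of $c$. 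Thus such an $S$ is exactly what we need, and producing it amounts to finding a measurable perfect matching in the Borel bipartite graph on two copies of $X$ whose edges join $x$ to the points of $\Gamma(x)$.

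The main point is the existence of this matching, for which I would invoke a measurable version of Hall's marriage theorem in the measure-preserving setting. The crucial input making the Hall inequalities available is that $T$ is itself a measure-preserving bijection with $T(x)=c(x)\cdot x$, i.e. a measure-preserving injection witnessing that the slightly larger correspondence $x\mapsto\{h\cdot x : d(h,c(x))<\delta\}$ has enough room in both directions. I expect verifying this Hall condition, and doing so measurably when the action is not ergodic — working over the $\sigma$-algebra of $\RR_G$-invariant sets, equivalently on each ergodic component of the ergodic decomposition — to be the main obstacle. Since we only need an $\eps$-approximation, I would match a conull-up-to-$\eps$ portion of $X$ through $\Gamma$ and complete $S$ to a genuine bijection on the leftover set of measure $<\eps$ using any partial isomorphism of $\RR_H$ (possible because the leftover domain and range have equal measure on each invariant set), which is harmless for the estimate.

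Finally, the same construction needs no freeness: $c$ is then merely one Borel selection with $T(x)=c(x)\cdot x$, and $\Gamma$ and the matching produce $S$ verbatim. I note that the cosmetic factorization $S=T\circ R^{-1}$, where $R$ has cocycle $h(\cdot)^{-1}c(\cdot)$ for a chosen $H$-approximant $h$ and is hence close to $\id$ in measure, can be used to present the argument as a near-identity perturbation, but it does not circumvent the matching step, which remains the heart of the proof.
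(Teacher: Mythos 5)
Your overall framework --- recasting density as a measurable matching problem, solving it up to a set of measure $\eps$, and completing inside the pseudo-full group of $\RR_H$ --- has the same shape as the paper's proof, which explicitly describes itself as ``a measurable version of Hall's marriage theorem.'' But there is a genuine gap at exactly the step you flag as the main obstacle, and the input you propose for it cannot close it. You claim the Hall inequalities for $\Gamma(x)=\{h\cdot x : h\in H,\ d(h,c(x))<\delta\}$ are available because $T$ is a measure-preserving bijection. However, $T$ only witnesses a perfect matching for the $G$-labelled correspondence $x\mapsto\{g\cdot x : g\in G,\ d(g,c(x))<\delta\}$; in general $T(x)=c(x)\cdot x\notin\Gamma(x)$, and nothing transfers this matching to the $H$-labelled graph: the orbit maps $g\mapsto g\cdot x$ are merely Borel, so density of $H$ in $G$ gives no control on how the sets $h\cdot A$, $h\in H$, sit relative to $\Phi(c)(A)$, nor on their overlaps, hence no lower bound $\mu(\Gamma(A))\geq\mu(A)$. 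This is not a technicality: your argument nowhere uses local compactness (the countable dense subgroup reduction, nonemptiness of $\Gamma(x)$, the matching, and the completion all make sense for an arbitrary Polish group), so if it were valid it would apply verbatim to Kolmogorov's example recalled in the paper (\cite[Ex. 3.14]{Carderi:2014qr}): $G=\mathfrak S_\infty$ acting by shift on $\{0,1\}^\N$, $H$ the dense subgroup of finitely supported permutations, where $[\RR_H]$ is \emph{not} dense in $[\RR_G]$ even though $T$ exists and $\Gamma(x)\neq\emptyset$ a.e. So in that example either the Hall condition for $\Gamma$ fails (showing it does not follow from the existence of $T$), or it holds and no measurable almost-perfect matching exists --- recall that by Laczkovich the ``measurable Hall theorem'' is false in general and cannot be invoked as a black box. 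Either way your construction of $S$ breaks down.

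What the paper does instead is isolate the exact property that makes the matching go through: Becker's notion of a \emph{suitable} action (Definition \ref{df:suitable}), which holds for every measure-preserving action of a locally compact Polish group and fails for Kolmogorov's example. Suitability is used exactly once, in Proposition \ref{prop:ess}, to produce a ``good couple'' (a partial, injective, countable-image approximation of the cocycle) inside any positive-measure set; the augmenting-path step (Proposition \ref{prop:bitbig}) and a Zorn argument (Proposition \ref{prop:zornification}) then play the role of your measurable Hall theorem. Note also the paper's division of labour: the hard matching step (Theorem \ref{thm:fmlsui}) approximates $f$ by cocycles with countable image \emph{in $G$}, and density of $H$ enters only afterwards, in the short deduction of Theorem \ref{thm:densesubpol}, via continuity of the near-action $G\to\Aut(X,\mu)$ --- choosing $h_i$ close to $g_i$ with $\mu(g_iA_i\bigtriangleup h_iA_i)$ small, which is what restores injectivity. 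Your plan uses density of $H$ inside the matching itself, and that is precisely the move that has no justification absent such a continuity statement.
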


The above theorem is false for general actions of Polish groups: an example of Kolmogorov gives a measure-preserving action of the bijection group  of the integers  $\mathfrak S_\infty$ such that whenever $H\leq \mathfrak S_\infty$ is a dense countable group, the full group $[\RR_H]$ is  \textit{not} dense in $[\RR_{\mathfrak S_\infty}]$ (see \cite[Ex. 3.14]{Carderi:2014qr}). 

We will actually prove that Theorem \ref{thmi:density} holds for every \textit{suitable} action of a Polish group (see Definition \ref{df:suitable} and Theorem \ref{thm:densesubpol})). This notion was introduced by Becker in \cite{MR2995370}, where he proved that any measure-preserving action of a locally compact Polish group is suitable. On the other hand, general Polish groups can have actions which are suitable and actions which are not. As an example, the standard Bernoulli shift of $\mathfrak S_\infty$ on $[0,1]^\N$ is suitable (see Example \ref{example: suitable bernoulli}).\\ 




Using Theorem \ref{thmi:density}, we can then show that orbit full groups associated to actions of locally compact, non-compact and non-discrete Polish groups contain a dense $2$-generated subgroup. This is in sharp contrast with the discrete case where the topological rank reflects the cost of the equivalence relation and thus can be equal to any integer $n\geq 2$. Our result also shows that cost cannot provide a rich invariant of orbit equivalence for actions of non discrete locally compact groups (see Remark \ref{rmq: OE and cost in lc setting}).


\begin{thmi}[see Theorem \ref{thm:toporank g delta}]\label{thmi:toporanklc}
Let $G$ be a locally compact unimodular non-discrete non-compact Polish group. For every measure-preserving, essentially free and ergodic action of $G$, there is a dense $G_\delta$ of couples $(T,U)$ in $[\mathcal R_G]^2$ which generate a dense free subgroup of $[\mathcal R_G]$ acting freely. In particular, the topological rank of $[\mathcal R_G]$ is $2$. 
\end{thmi}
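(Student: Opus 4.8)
The plan is to run a Baire category argument on the Polish space $[\RR_G]^2$ of pairs, exhibiting the desired set as the intersection of two comeager subsets. Fix a compatible metric $\rho$ and a countable dense set $\{V_n\}_{n\in\N}\subset[\RR_G]$, let $\free_2$ be the free group on two generators, and write $w\mapsto w(T,U)$ for the evaluation of a reduced word at a pair $(T,U)$. Since $[\RR_G]$ is a topological group, each $(T,U)\mapsto w(T,U)$ is continuous, so
\[
D:=\big\{(T,U):\overline{\la T,U\ra}=[\RR_G]\big\}=\bigcap_{n,k}\ \bigcup_{w\in\free_2}\big\{(T,U):\rho(w(T,U),V_n)<1/k\big\}
\]
is $G_\delta$, being a countable intersection of the open sets $O_{n,k}:=\bigcup_w\{\rho(w(T,U),V_n)<1/k\}$. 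Using that $S\mapsto\mu(\mathrm{Fix}\,S)$ is upper semicontinuous,
\[
A:=\bigcap_{w\neq 1}\big\{(T,U):\mu(\mathrm{Fix}\,w(T,U))=0\big\}
\]
is also $G_\delta$. A pair in $A$ has the property that every nontrivial reduced word acts without fixed points, so $\la T,U\ra$ is free \emph{and} acts freely; thus it suffices to prove that both $D$ and $A$ are comeager and to intersect them.

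The comeagerness of $A$ is the softer point. Each $A_w:=\{(T,U):\mu(\mathrm{Fix}\,w(T,U))=0\}$ is $G_\delta$, so I only need it dense. Given $(T_0,U_0)$, a word $w$ and $\eps>0$, I would perturb $T_0$ and $U_0$ within $\rho<\eps$ so as to annihilate the fixed-point set of $w(T,U)$: essential freeness of the action and non-discreteness of $G$ provide, along each orbit, a continuum of elements into which one may push points off $\mathrm{Fix}\,w$, and ergodicity lets the correction be spread measurably over $X$. Intersecting the dense $G_\delta$ sets $A_w$ over the countably many nontrivial words $w$ yields that $A$ is comeager.

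The comeagerness of $D$ is the main obstacle, and by Baire it reduces to showing that each open set $O_{n,k}$ is \emph{dense}. Concretely, the crux is the local statement: for every pair $(T_0,U_0)$, every target $V\in[\RR_G]$, and all $\eps,\delta>0$, there exist $(T,U)$ with $\rho(T,T_0),\rho(U,U_0)<\eps$ and a word $w$ with $\rho(w(T,U),V)<\delta$. By Theorem~\ref{thmi:density} it is enough to treat targets $V$ lying in a fixed dense countable orbit full group $[\RR_H]$, for $H\le G$ dense. Here the hypotheses on $G$ are essential: passing to a cross section $Y$ with its countable, ergodic cross-section relation $\RR_Y$, the relation $\RR_G$ is built from $\RR_Y$ together with the continuous flow transverse to $Y$, a direction of total mass $\covol(Y)$ that behaves as an approximately cost-one piece. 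I would fix a first transformation $T$ realizing an ergodic dynamics along a Rokhlin tower adapted to $Y$, and then, by a marker and tower construction in the locally compact setting, encode the chosen elements of $[\RR_H]$ into words $w(T,U)$ for a suitable second transformation $U$ obtained as a small perturbation of $U_0$; the continuous transverse direction is precisely what allows two generators to suffice, rather than the $\lfloor \Cost\rfloor+1$ forced in the discrete setting. Verifying that this can be carried out within $\eps$ of any prescribed pair is the heart of the argument.

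Granting the two comeagerness statements, $D\cap A$ is a dense $G_\delta$ by the Baire category theorem, so a generic pair $(T,U)$ generates a dense free subgroup of $[\RR_G]$ acting freely; in particular the topological rank is at most $2$. For the reverse inequality, the closure of a cyclic subgroup of a topological group is abelian, whereas $[\RR_G]$ is non-abelian for an ergodic essentially free action, so it cannot be topologically generated by one element and its topological rank is therefore exactly $2$.
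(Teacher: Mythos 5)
Your Baire-category frame is sound: the good set is indeed $G_\delta$ (and your upper semicontinuity claim for $S\mapsto\mu(\mathrm{Fix}\,S)$ does hold in the convergence-in-measure topology for essentially free actions, via the identification of $[\RR_G]$ with $\widetilde{[\RR_G]}$ and a.e.\ convergence along subsequences), so the problem correctly reduces to density. But neither density statement is actually proved, and the one you yourself call ``the heart of the argument'' is exactly where the theorem's difficulty lies. For the density of the sets $O_{n,k}$ you propose to perturb an \emph{arbitrary} pair $(T_0,U_0)$ so that some word approximates a prescribed target, via an unspecified ``marker and tower construction in the locally compact setting''; this is not carried out, and it is in fact stronger than what one needs to prove. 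The paper avoids this construction entirely: by Theorem \ref{thm:OEprod} one may assume $\RR_G=\RR_{\circle\times\Gamma}$; by Theorem \ref{thm:densesub} the full group $[\RR_{\Z\times\Gamma}]$ is dense in $[\RR_G]$, where $\Z\subset\circle$ is generated by an irrational rotation; the relation $\RR_{\Z\times\Gamma}$ has cost $1$ by Gaboriau's Proposition VI.23; and the already-known two-generator theorem for cost-$1$ countable equivalence relations (Theorem 1.7 of \cite{lm14nonerg}) then produces a \emph{single} aperiodic $T$ whose set of partners $U$ with $\overline{\la T,U\ra}^{d_u}=[\RR_{\Z\times\Gamma}]$ and $\la T,U\ra\cong\free_2$ is uniformly dense, hence dense in $[\RR_G]$. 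The ingredient missing from your plan is what converts this single $T$ into density of pairs in $[\RR_G]^2$: the good set is invariant under diagonal conjugation, and by Theorem 4.4 of \cite{Carderi:2014qr} the conjugacy class of an aperiodic element is dense in $[\RR_G]$ for the topology of convergence in measure, so pairs of the form $(VTV\inv,VUV\inv)$ already sweep out a dense set. Without either this conjugation trick or a genuinely executed tower argument, your proof of comeagerness of $D$ has a hole exactly where the work is.

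The same criticism applies, less severely, to your set $A$: the claim that one can perturb $(T_0,U_0)$ within $\eps$ so that a fixed nontrivial word $w$ acts with null fixed-point set is asserted, not proved. ``Pushing points off $\mathrm{Fix}\,w$'' by modifying the generators changes the values of $w$ elsewhere and can create new fixed points; this genericity statement is nontrivial even in $\Aut(X,\mu)$, where it is a theorem of T\"ornquist \cite{MR2210067}. In the paper, freeness of the $\free_2$-action comes packaged with the cited cost-$1$ theorem, so no separate genericity argument is needed. Your closing argument that the rank is exactly $2$ (the closure of a cyclic subgroup is abelian while $[\RR_G]$ is not) is correct.
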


A key tool in the proof of Theorem \ref{thmi:toporanklc} is a well-known result of Forrest, namely the existence of a \textit{cross-section} for actions of locally compact group \cite{MR0417388}. This will roughly provide a countable group $\Gamma$ such that the cost $1$ group $\Z\times \Gamma$ is a ‘‘measurable dense subgroup'' of $G$. We can then use the results of \cite{gentopergo} along with Theorem \ref{thmi:density} to find a dense $2$-generated subgroup.

\begin{rmqi}
Theorem \ref{thmi:toporanklc} is also true in the case $G$ is compact acting ergodically on $(X,\mu)$. Indeed in this case the action is essentially transitive and $[\mathcal R_G]=\Aut(X,\mu)$ which has a dense $G_\delta$ of couples of topological generators inducing a free action of the free group on two generators by results of Prasad and Törnquist respectively \cite{MR624915,MR2210067}. We do not know whether Theorem \ref{thmi:toporanklc}  holds for non-discrete Polish groups, even in the case of suitable actions. 
\end{rmqi}
\vspace{0.2cm}

In this work, we also extend a result of Giordano and Pestov, Theorem 5.7 of \cite{MR2311665}, that says that a full group of a free, ergodic action of a countable group is \textit{extremely amenable} if and only if the acting countable group is amenable. 

Before stating the theorem, let us recall that a group is \textit{extremely amenable} if every action of the group on a compact space admits a fixed point. The first example of an extremely amenable group was given by Christensen and Herer \cite{MR0412369}. Since then several examples of extremely amenable groups have been found such as the unitary group of a separable Hilbert space \cite{MR708367} or the group of measure-preserving bijections of a standard probability space \cite{MR1891002}.

\begin{thmi}\label{thmi:extr}
  Let $G$ be a locally compact second countable unimodular group acting freely and ergodically on $(X,\mu)$. Then the full group of $G$ is extremely amenable if and only if $G$ is amenable. 
\end{thmi}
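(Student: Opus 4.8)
The plan is to prove the two implications separately. Throughout, write $M$ for the von Neumann algebra of the action (the crossed product $L^\infty(X,\mu)\rtimes G$), equipped with its canonical trace $\tau$, and represent it on $\H=L^2(M,\tau)$ so that $M\subseteq\BH$. I would use the embedding of $[\RR_G]$ as a topological subgroup of the unitary group of $M$ and, crucially, the fact established earlier that $[\RR_G]$ generates $M$ as a von Neumann algebra.

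For the implication ``extremely amenable $\Rightarrow$ $G$ amenable'', I would only use that $[\RR_G]$ is amenable as a topological group, which extreme amenability implies. Let $\U=[\RR_G]$ act on the set $K$ of states on $\BH$ extending $\tau$ on $M$ by the affine conjugation action $u\cdot\phi:=\phi(u^\ast\,\cdot\,u)$. The set $K$ is nonempty, weak-$\ast$ compact and convex, and is preserved by $\U$ since conjugation by a unitary of $M$ fixes $\tau$ on $M$; one then checks that the action is continuous from the strong topology on $\U$ to the weak-$\ast$ topology on $K$. By amenability there is an invariant state $\phi\in K$, and invariance reads $\phi(uS)=\phi(Su)$ for all $u\in\U$ and $S\in\BH$.

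It remains to upgrade this to a hypertrace, and this is where generation enters: since $\U$ generates $M$, its linear span is $\norm{\cdot}_2$-dense in $M$, so by the Kaplansky density theorem together with the Cauchy--Schwarz inequality for $\phi$ (using $\phi|_M=\tau$) the relation $\phi(xS)=\phi(Sx)$ extends from $x\in\U$ to all $x\in M$. Thus $\phi$ is an $M$-hypertrace and $M$ is injective by Connes' theorem. Finally, since the action is free, injectivity of $M=L^\infty(X,\mu)\rtimes G$ forces the action to be amenable in the sense of Zimmer, and a group admitting an amenable action with an invariant probability measure is itself amenable; hence $G$ is amenable. I expect the main obstacle to lie here: verifying the continuity of the conjugation action on $K$, and extracting amenability of the locally compact group $G$ from injectivity of the crossed product, i.e. invoking Zimmer's amenability theory in the locally compact, non-discrete setting.

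For the converse, assume $G$ is amenable and choose a countable dense subgroup $H\leq G$, which is again amenable. The countable equivalence relation $\RR_H$ is then amenable, hence hyperfinite by Connes--Feldman--Weiss, so by the theorem of Giordano--Pestov the full group $[\RR_H]$ is extremely amenable in its uniform topology. Now the topology that $[\RR_G]$ induces on $[\RR_H]$ is \emph{coarser} than the uniform one, as it only records convergence in measure of the associated $G$-valued cocycles, and extreme amenability passes from a finer to a coarser group topology, since every action continuous for the coarser topology is a fortiori continuous for the finer one and thus has a fixed point. Therefore $[\RR_H]$ is extremely amenable for the topology induced by $[\RR_G]$. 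By Theorem \ref{thmi:density}, $[\RR_H]$ is dense in $[\RR_G]$, and extreme amenability passes from a dense subgroup to the ambient group by a standard limiting argument on fixed points. Hence $[\RR_G]$ is extremely amenable.
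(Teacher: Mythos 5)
There are genuine gaps in both directions, of different natures.

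\textbf{Direction ``$G$ amenable $\Rightarrow$ $[\RR_G]$ extremely amenable''.} Your first step fails: a countable dense subgroup $H$ of an amenable locally compact group need \emph{not} be amenable as a discrete group, and --- what your argument actually needs --- the countable relation $\RR_H$ need not be amenable. For instance $\mathrm{SO}(3)$ is compact, hence amenable, but any rank-two free subgroup $\F_2\leq \mathrm{SO}(3)$ is automatically dense (its closure is a closed, non virtually abelian subgroup); its translation action on $(\mathrm{SO}(3),\mathrm{Haar})$ is free and measure-preserving, so $\RR_{\F_2}$ is a \emph{non-amenable} countable relation sitting inside the amenable (transitive) relation $\RR_{\mathrm{SO}(3)}$. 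The same happens in the non-compact unimodular setting, e.g.\ $\F_2\times\Q$ dense in $\mathrm{SO}(3)\times\R$. So amenability of a countable dense subrelation cannot be assumed; it must be \emph{produced}, and this is precisely what the cross-section theorem (Theorem \ref{thm:OEprod}) does: it yields an orbit equivalence $\RR_G\cong\RR_{\circle\times\Gamma}$ with $\Gamma$ countable and $\RR_\Gamma$ amenable if and only if $G$ is. If you replace your arbitrary $H$ by $\Z\times\Gamma$ (an irrational rotation times $\Gamma$), which is dense in $\circle\times\Gamma$ and whose relation $\RR_\Z\times\RR_\Gamma$ is hyperfinite and ergodic, then the rest of your argument --- Giordano--Pestov in the uniform topology, passing to the coarser induced topology, density via Theorem \ref{thm:densesub}, and passing extreme amenability from a dense subgroup to its closure --- is correct and gives a proof. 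This repaired route is a mild variant of the paper's, which instead exhausts $\Gamma\times\circle$ by compact subgroups $K_n$ and shows the full groups $[\RR_{K_n}]$ are extremely amenable via $\LL^0$-groups.

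\textbf{Direction ``$[\RR_G]$ extremely amenable $\Rightarrow$ $G$ amenable''.} Your outline (amenability of $[\RR_G]$, plus $\pi([\RR_G])''=\LL^\infty(X,\mu)\rtimes G$, forces injectivity of the crossed product, which forces $G$ amenable) is exactly the paper's, and the last step is indeed available here (Theorem \ref{thm: injective crossed product and amenability}). But the hypertrace implementation does not make sense in the main new case of the theorem: when $G$ is neither discrete nor compact, $M=\LL^\infty(X,\mu)\rtimes G$ is a $\mathrm{II}_\infty$ factor --- by the product decomposition it is of the form $N\,\overline{\otimes}\,\B(\LL^2(\circle))$ with $N$ of type $\mathrm{II}_1$, so it contains a unital copy of $\B(\LL^2(\circle))$ and admits no tracial state; its canonical trace is semifinite with $\tau(1)=+\infty$. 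Hence your set $K$ of states extending $\tau$ is empty and the argument never starts. (Curiously, the obstacle you anticipated is not the real one: when a tracial state does exist, Cauchy--Schwarz together with $\phi|_M=\tau$ gives $\abs{\phi(u^*Su)-\phi(v^*Sv)}\leq 2\norm{S}\norm{u-v}_2$, so the conjugation action on $K$ is even Lipschitz.) The paper's fix is to avoid states altogether and verify Schwartz's property (P), which is trace-free: for $x\in\BH$, let $[\RR_G]$ act by conjugation on the weakly compact convex hull of $\{uxu^*:u\in\mathcal U(M)\}$; this action is continuous, amenability of $[\RR_G]$ gives a fixed point, and by generation (item (3) of Proposition \ref{prop:full group gen vna}) that fixed point lies in the commutant $M'$; Connes' theorem then yields injectivity.
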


The proof of the direct implication in the theorem is an easy adaptation of Giordano and Pestov's arguments to the locally compact case, using cross-sections.

For the other direction, we follow a different path and use von Neumann algebras. We first prove that the von Neumann algebra of the action $G\act(X,\mu)$ is generated by the full group $[\mathcal R_G]$, see Proposition \ref{prop:full group gen vna}. We use this to show that if $[\RR_G]$ is extremely amenable, then the von Neumann algebra of the action is amenable and therefore the acting group is amenable.


\subsection*{Acknowledgement} 

Both authors would like to thank Damien Gaboriau, Henrik Petersen, Sven Raum and Todor Tsankov for many useful discussions around this topic. We are especially grateful to Sven Raum for explaining to us the proof of item (3) of Proposition \ref{prop:full group gen vna} in the discrete case.  The authors were partially supported by Projet ANR-14-CE25-0004 GAMME.

\section{Orbit equivalence in the locally compact case}

\subsection{Measure-preserving actions}
Whenever a group $G$ acts on a set $X$ and $x\in X$, we denote by $G_x\leq G$ the stabilizer of $x$. 
The \textbf{free part} of an action $G\act X$ is the $G$-invariant set of all $x\in X$ such that $G_x=\{e\}$. 

A standard probability space is a probability space $(X,\mathcal B,\mu)$ such that $(X,\mathcal B)$ is a standard Borel space and $\mu$ is a Borel non-atomic probability measure. All such probability spaces are isomorphic, see \cite[Thm. 17.41]{MR1321597}. A subset $A$ of $X$ is a \textbf{Borel set} if it belongs to the $\sigma$-algebra $\mathcal B$. It is called a (Lebesgue-) \textbf{measurable set} if it belongs to the $\mu$-completion of $\mathcal B$. From now on, we will drop the $\mathcal B$ and fix  a standard probability space $(X,\mu)$. 

Whenever $G$ is a Polish group, a \textbf{Borel $G$-action} is a Borel action map $\alpha: G\times X\to X$. As usual, we will often drop the letter $\alpha$ and let $g\cdot x:=\alpha(g,x)$ for every $g\in G$ and $x\in X$.
 The following lemma is  well-known, for a proof see \cite[Lem. 10]{zbMATH06244595}.

\begin{lem}\label{lem:free part is Borel}Let $G$ be a locally compact Polish\footnote{Recall that a locally compact group is Polish if and only if it is second-countable (see \cite[Theorem 5.3]{MR1321597}).} group, and consider a Borel $G$-action on a standard Borel space $X$. Then the free part of the $G$-action is a Borel subset of $X$.
\end{lem}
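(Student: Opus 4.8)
The plan is to write the complement of the free part---the set of points with a non-trivial stabilizer---as the projection of a Borel set whose vertical fibers are $\sigma$-compact, and then to deduce Borelness of this projection from a classical theorem. Since $(X,\mathcal B)$ is standard Borel I may fix a compatible Polish topology on $X$, and $G$ is Polish because it is locally compact and second countable; all the sets considered below, as well as the final conclusion, depend only on the Borel structures, so this choice is harmless.

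Consider
\[ A=\{(x,g)\in X\times G:\ g\neq e\ \text{and}\ g\cdot x=x\}. \]
This is Borel: the action map is Borel, so $(x,g)\mapsto(g\cdot x,x)$ is Borel and the preimage of the diagonal of $X\times X$ is Borel; I then intersect it with the Borel set $X\times(G\setminus\{e\})$. By definition the free part equals $X\setminus\pi_X(A)$, where $\pi_X$ is the projection to the first coordinate, so it is enough to prove that $\pi_X(A)$ is Borel.

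The crucial observation is that every fiber $A_x=G_x\setminus\{e\}$ is $\sigma$-compact. Indeed $G_x$ is a \emph{closed} subgroup of $G$, hence itself locally compact and second countable, hence $\sigma$-compact; writing $G_x=\bigcup_n K_n$ with each $K_n$ compact and fixing a compatible metric $d$ on $G$, one has $G_x\setminus\{e\}=\bigcup_{n,m}\{g\in K_n:\ d(g,e)\geq 1/m\}$, a countable union of compact sets. The Arsenin--Kunugui theorem asserts that a Borel subset of a product of two Polish spaces all of whose vertical fibers are $\sigma$-compact has a Borel projection; applying it to $A$ shows that $\pi_X(A)$ is Borel, and therefore so is its complement, the free part.

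The heart of the matter, and the only place where the hypotheses are really used, is the $\sigma$-compactness of the fibers: for a general Polish group the projection of $A$ would only be analytic, and it is local compactness of $G$ that forces the closed subgroups $G_x$ to be $\sigma$-compact. If one prefers to avoid Arsenin--Kunugui, the same conclusion follows from the more elementary fact that projections of Borel sets with \emph{compact} fibers are Borel: choosing an exhaustion $G=\bigcup_n K_n$ by compact sets and a neighborhood basis $(V_m)$ of $e$ with compact closures, one writes the non-free part as the countable union over $n,m$ of the projections of the Borel sets $\{(x,g):g\in K_n\setminus V_m,\ g\cdot x=x\}$, whose fibers $G_x\cap(K_n\setminus V_m)$ are compact.
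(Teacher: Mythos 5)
Your overall strategy is sound, and for the record the paper itself gives no proof of this lemma at all (it refers the reader to Lemma~10 of a cited reference), so there is no internal argument to compare against; I can only judge correctness. Writing the non-free part as $\pi_X(A)$ with $A=\{(x,g):g\neq e,\ g\cdot x=x\}$ Borel, and deducing Borelness of the projection from Arsenin--Kunugui (or, after chopping $G$ into compact pieces, from the projection theorem for Borel sets with compact sections) is a perfectly good route. The genuine gap is at exactly the point you yourself call the heart of the matter: the claim that $G_x$ is a \emph{closed} subgroup of $G$. For a \emph{continuous} action this is immediate, but the hypothesis is only that the action is Borel, and your opening move does not repair this: fixing an arbitrary Polish topology on $X$ compatible with its Borel structure does \emph{not} make the action continuous. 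So a priori $G_x=\{g\in G:\ g\cdot x=x\}$ is only a \emph{Borel} subgroup of $G$. Borel subgroups of locally compact Polish groups need not be closed ($\Q\subset\R$), and --- which is what your argument actually needs --- need not even be $\sigma$-compact: for instance, the subgroup of sequences converging to $0$ in the compact group $(\circle)^{\N}$ is Borel (properly $\Pi^0_3$) but not $K_\sigma$. Hence neither the $\sigma$-compactness of the fibers $A_x$ nor the compactness of the fibers $G_x\cap(K_n\setminus V_m)$ in your alternative argument is established, and both routes collapse at this step.

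The gap is fixable, but the fix is a genuine ingredient rather than a remark. One way is to invoke the Becker--Kechris topological realization theorem: for any Borel action of a Polish group on a standard Borel space, there is a compatible Polish topology on $X$ making the action \emph{continuous}; since $G_x$ is a subset of $G$ defined independently of any topology on $X$, its closedness follows, and the rest of your proof then works verbatim. Alternatively, one can prove closedness of stabilizers directly: if $G_x$ were not closed, then by Pettis' lemma it would be a dense \emph{meager} subgroup of the Polish group $P:=\overline{G_x}$; the orbit map $g\in P\mapsto g\cdot x\in X$ is Borel, invariant under right translation by $G_x$, and injective on cosets, so, since $X$ is countably separated and every right $G_x$-orbit is dense in $P$, generic ergodicity forces this map to be constant on a comeager set; that comeager set then lies in a single coset of $G_x$, contradicting the fact that all cosets of a meager subgroup are meager. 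Either way, this step must appear explicitly; as written, the key nontrivial input of the Borel (as opposed to continuous) case is missing.
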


We denote by $\Aut(X,\mu)$ the group of all measure-preserving Borel bijections of $(X,\mu)$, where we identify two such bijections if they coincide on a full measure subset of $X$. It is equipped with the \textbf{weak topology}, defined to be the coarser group topology which makes the maps $T\in\Aut(X,\mu)\mapsto \mu(T(A)\bigtriangleup A)$ continuous for every Borel set $A$. This turns $\Aut(X,\mu)$ into a Polish group (see e.g. \cite[I.1.(B)]{MR2583950}).

A \textbf{measure-preserving} $G$-action on $(X,\mu)$ is a Borel $G$-action on $X$ such that for every $g\in G$ and every Borel $A\subseteq X$, one has $\mu(g A)=\mu(A)$. If $G$ is a group, a near-$G$-action on $(X,\mu)$ is a homomorphism $G\to\Aut(X,\mu)$. Every measure-preserving action induces a near-action, and a near-action is the same as an action by $\mu$-preserving automorphisms on the measure algebra of $(X,\mu)$.

The following lemma is well-known when $G$ is locally compact (see for instance \cite[Lem. II.1.1]{MR910005}). We include a simple proof which works for all Polish groups. 

\begin{lem}Every measure-preserving action $\alpha$ of a Polish group $G$ on $(X,\mu)$ induces a continuous near-action $\rho_\alpha:G\to\Aut(X,\mu)$.
\end{lem}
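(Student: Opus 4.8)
The plan is to reduce the statement to a standard automatic-continuity theorem: a Borel homomorphism between Polish groups is automatically continuous. Thus, after checking that $\rho_\alpha$ is a well-defined group homomorphism, the entire content is to verify that $\rho_\alpha\colon G\to\Aut(X,\mu)$ is Borel measurable; continuity is then free.

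First I would record the easy points. For fixed $g\in G$, the section $\alpha(g,\cdot)\colon X\to X$ is a Borel bijection with Borel inverse $\alpha(g^{-1},\cdot)$, and it preserves $\mu$ by hypothesis, so $\rho_\alpha(g):=\alpha(g,\cdot)$ is a genuine element of $\Aut(X,\mu)$. The action axioms $\alpha(gh,\cdot)=\alpha(g,\cdot)\circ\alpha(h,\cdot)$ and $\alpha(e,\cdot)=\id$ immediately give that $\rho_\alpha$ is a homomorphism.

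The heart of the argument is Borel measurability. Since $\Aut(X,\mu)$ is a separable metrizable group whose weak topology coincides with the topology of pointwise convergence on the measure algebra $\MAlg(X,\mu)$, this topology is the initial topology induced by the maps $T\mapsto TA\in\MAlg(X,\mu)$, equivalently by the real-valued maps $\phi_{A,B}\colon T\mapsto\mu(TA\bigtriangleup B)$. Hence it suffices to let $A,B$ range over a countable family generating the measure algebra and to show that each composition $g\mapsto\mu\bigl((gA)\bigtriangleup B\bigr)$ is Borel. For this I would write $\mu\bigl((gA)\bigtriangleup B\bigr)=\int_X\bigl|\mathbf 1_A(\alpha(g^{-1},x))-\mathbf 1_B(x)\bigr|\,\d\mu(x)$, using $x\in gA\iff \alpha(g^{-1},x)\in A$. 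The integrand is a bounded Borel function of $(g,x)$, because $(g,x)\mapsto\alpha(g^{-1},x)$ is Borel (inversion is continuous and $\alpha$ is Borel) and $\mathbf 1_A,\mathbf 1_B$ are Borel. As $\mu$ is a probability measure, Fubini's theorem makes the partial integral a Borel function of $g$, as desired. Consequently $\rho_\alpha$ is Borel.

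The main obstacle is precisely this measurability step, and within it the two points that must be handled with care: the identification of the weak topology on $\Aut(X,\mu)$ with pointwise convergence on the measure algebra (so that testing against the countably many maps $\phi_{A,B}$ detects Borelness), and the clean application of Fubini to the jointly Borel, bounded integrand. Once $\rho_\alpha$ is known to be a Borel homomorphism between the Polish groups $G$ and $\Aut(X,\mu)$, the automatic-continuity theorem for Borel homomorphisms of Polish groups concludes the proof. I would remark that this route is uniform in $G$: it uses only that $\alpha$ is Borel and measure-preserving and that both groups are Polish, which is exactly why it works for all Polish groups and not merely locally compact ones.
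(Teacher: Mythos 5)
Your proof is correct and follows essentially the same route as the paper: both reduce continuity to Borel measurability via Pettis' automatic-continuity lemma, and both verify Borelness by testing against the maps $T\mapsto\mu(T(A)\bigtriangleup B)$ defining the weak topology, using the joint Borelness of the action map together with a Fubini-type argument (which the paper leaves implicit and you spell out). The only cosmetic difference is that the paper tests with $\mu(g(A)\bigtriangleup A)$ rather than with pairs $(A,B)$.
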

\begin{proof}
  By Pettis' Lemma (see \cite[Thm. 1.2.6]{MR1425877}), we only need to check that $\rho_\alpha$ is a Borel map. By definition of the weak topology on $\Aut(X,\mu)$ it is enough to show that for every Borel subset $A$ of $X$ and every $\epsilon>0$, the set \[B:=\{g\in G:\mu(g(A)\bigtriangleup A)<\epsilon\}\] is Borel. For this, observe that since the action is Borel, the subset $\Gamma:=\{(g,x)\in X\times G:\ x\in g(A)\}$ is Borel and hence $\Gamma_A:=\Gamma\bigtriangleup (A\times G)$ is also Borel. This implies that the map \[M:g\mapsto \mu\left(\left\lbrace x\in X:\ x\in A\bigtriangleup g(A)\right\rbrace\right)\] is also Borel. So we can conclude observing that $B=M^{-1}([0,\epsilon[)$. \end{proof}

For locally compact Polish  groups, measure-preserving actions and  continuous near-actions are in one-to-one correspondence.
\begin{thm}[Mackey, \cite{MR0143874}]\label{thmmac}
  Let $G$ be a locally compact Polish group and let $(X,\mu)$ be a standard probability space. Then for every continuous homomorphism $\rho:G\to \Aut(X,\mu)$ there exists a measure-preserving action $\alpha$ of $G$ on $(X,\mu)$ such that the induced homomorphism $\rho_\alpha: G\to \Aut(X,\mu)$ is equal to $\rho$.

Moreover if $\alpha$ and $\beta$ are two measure-preserving actions of $G$ such that the induced homomorphisms $\rho_\alpha$ and $\rho_\beta$ are equal, then there is a Borel $G$-invariant subset $A\subset X$ of full measure such that $\alpha\bigr|_A=\beta\bigr|_A$. 
\end{thm}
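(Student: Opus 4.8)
The plan is to extract the point map $\alpha$ from the $L^\infty$-data carried by $\rho$ and then to repair the resulting almost-everywhere cocycle identity into a genuine action on a conull invariant set. Fix a sequence $(f_n)_n$ of bounded Borel functions separating the points of $X$, so that $\iota\colon x\mapsto(f_n(x))_n$ is a Borel isomorphism of $X$ onto a Borel subset of $\R^\N$. Since $\rho(g)\in\Aut(X,\mu)$ acts on $L^\infty(X,\mu)$, the point transformation we seek must satisfy $f_n(\alpha(g,x))=\bigl(\rho(g^{-1})f_n\bigr)(x)$ for $\mu$-almost every $x$, where $\rho(g^{-1})f_n$ denotes the image of $f_n$ under the corresponding automorphism. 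Thus it suffices to build, for each $n$, a jointly measurable $F_n\colon G\times X\to\R$ with $F_n(g,\cdot)$ representing $\rho(g^{-1})f_n$, and to set $\alpha(g,x):=\iota^{-1}\bigl((F_n(g,x))_n\bigr)$ wherever the resulting sequence lands in $\iota(X)$.

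The first technical point is the joint measurability of the $F_n$. Continuity of $\rho$ in the weak topology makes each map $g\mapsto\rho(g^{-1})f_n$ continuous, hence Borel, from $G$ into $L^2(X,\mu)$; as $G$ is second countable and $L^2(X,\mu)$ separable, a standard approximation by countably many simple functions produces a jointly measurable representative. After discarding a null set I may assume that $\alpha$ is Borel on $G\times X$, that $\alpha(g,\cdot)$ is measure-preserving for every $g$ (each $\rho(g)$ preserves $\mu$), and that $\alpha(e,\cdot)=\id$ almost everywhere.

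The heart of the matter is upgrading the homomorphism identities $\rho(gh)=\rho(g)\rho(h)$ to a genuine action. For fixed $g,h$ they give $\alpha(gh,x)=\alpha(g,\alpha(h,x))$ for $\mu$-almost every $x$; fixing a left Haar measure $\lambda$ on $G$ and applying Fubini on $G\times G\times X$, this cocycle identity holds $(\lambda\times\lambda\times\mu)$-almost everywhere. The main obstacle, and the step that genuinely uses local compactness, is then to find a conull $G$-invariant Borel set $A\subseteq X$ on which $\alpha$ restricts to an everywhere-defined Borel action. This is Mackey's point-realization lemma, whose proof integrates against the Haar measure $\lambda$ to select coherent orbit maps for almost every point and then propagates that coherence through the action; it is precisely here that general Polish groups fail, for lack of a Haar measure to average against. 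Granting it, $\alpha\bigr|_A$ is a measure-preserving action inducing $\rho$, proving existence.

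For uniqueness, suppose $\alpha$ and $\beta$ both induce $\rho$. For each $g$ one has $\alpha(g,x)=\beta(g,x)$ for $\mu$-almost every $x$, so by Fubini the coincidence set $\{(g,x):\alpha(g,x)=\beta(g,x)\}$ is $(\lambda\times\mu)$-conull. The same Haar-averaging technique that underlies Mackey's lemma --- upgrading an almost-everywhere equality to one valid on a conull $G$-invariant Borel set --- then yields the desired invariant conull Borel $A$ with $\alpha\bigr|_A=\beta\bigr|_A$.
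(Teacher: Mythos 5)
The paper itself offers no proof of this statement: it is quoted as Mackey's theorem with a citation to \cite{MR0143874}, so your attempt has to stand on its own as a proof, and it does not. Your preprocessing is correct but routine: choosing a separating sequence $(f_n)$, using continuity of $\rho$ for the weak topology to get Borel maps $g\mapsto \rho(g\inv)f_n$ into $\LL^2(X,\mu)$, lifting to jointly measurable representatives, and applying Fubini to obtain the identities $\alpha(e,x)=x$ and $\alpha(gh,x)=\alpha(g,\alpha(h,x))$ almost everywhere. This is essentially von Neumann's point realization of each single automorphism $\rho(g)$, performed measurably in $g$. (Even here there is a slip: after discarding a null subset of $G\times X$ you cannot assume $\alpha(g,\cdot)$ is defined and measure-preserving \emph{for every} $g$; Fubini only gives this for almost every $g$.)

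The genuine gap is that the step you call ``Mackey's point-realization lemma'' and then proceed by ``granting'' is not an auxiliary lemma available to you --- it \emph{is} the theorem being proved. Once the a.e.-defined, a.e.-multiplicative Borel map $\alpha$ is in hand, the entire content of Mackey's result (refined later by Ramsay and Zimmer) is precisely the correction of $\alpha$ on a null set into an everywhere-defined Borel action on a $G$-invariant conull Borel set; citing it is circular. Your one-sentence description (``integrates against the Haar measure to select coherent orbit maps and then propagates that coherence'') names a strategy, not an argument: to execute it one must define a corrected map, say $\tilde\alpha(g,x):=\alpha(gh\inv,\alpha(h,x))$ for suitably generic $h$, prove via Fubini that the value is independent of $h$ off a null set, prove Borelness of $\tilde\alpha$, verify the action identity \emph{everywhere}, and exhibit the invariant conull Borel set --- none of which appears. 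The same criticism applies to your uniqueness paragraph: passing from ``$\alpha(g,x)=\beta(g,x)$ for a.e.\ $(g,x)$'' to ``for all $g$ and all $x$ in a $G$-invariant conull Borel set'' is exactly the same propagation problem, and the naive Fubini argument does not suffice because in the identity $\alpha(g,x)=\alpha(gh\inv,\alpha(h,x))$ the group element $gh\inv$ and the point $\alpha(h,x)$ vary together with $h$, so one cannot directly intersect the relevant conull sets. Without these arguments the proposal proves nothing beyond the standard reduction.
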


\begin{rmq} The above result is in sharp contrast with the following situation which was uncovered by Glasner, Tsirelson and Weiss: if $G$ a Levy Polish group, every measure-preserving $G$-action is trivial but $G$ can still have interesting continuous near actions. Examples include $\Aut(X,\mu)$ itself or the orthogonal group of an infinite-dimensional Hilbert space, see \cite{MR2191233}.
\end{rmq}

If $G$ is locally compact and Polish, then we will call the measure-preserving action associated to a near-action $\rho:G\to \Aut(X,\mu)$ a realization of the near-action. Let us recall two important definitions. 

\begin{df}
  A measure-preserving action of a Polish group $G$ on the probability measure space $(X,\mu)$ is 
  \begin{itemize}
  \item \textbf{essentially free} if the free part of the action has full measure, that it if there is a measurable subset of full measure $A\subset X$ such that for every $x\in A$ and every $g\in G$, we have that $gx\neq x$; 
  \item \textbf{ergodic} if every Borel subset $A\subset X$ which is \textbf{almost $G$-invariant} (i.e. for all $g\in G$ we have $\mu(A\bigtriangleup g(A))=0$) has measure $0$ or $1$. 
  \end{itemize}
\end{df}

\begin{rmq}
  \begin{itemize}
  \item In the definition of essential freeness, one can actually asssume that $A$ is $G$-invariant and Borel by Lemma \ref{lem:free part is Borel}.
  \item There are actions of compact groups such that for every $g\in G$, the set $\{x\in X:\ gx=x\}$ has measure $0$ but which are not essentially free.
  \item Mackey's Theorem implies that if $G$ is locally compact and Polish then if a realization of an action is essentially free, then all Borel realizations are essentially free. 
  \item Another easy consequence of Mackey's Theorem \ref{thmmac} is that a measurable  action of a locally compact Polish group is ergodic if and only if  every Borel subset $A\subset X$ which is  $G$-invariant (i.e. for all $g\in G$, we have $g(A)=A$) has measure $0$ or $1$ (see \cite[Theorem 3]{MR0143874}). This is not true for Polish groups in general, as witnessed by Kolmogorov's example \cite[Example 9]{MR1760945}.
\end{itemize}
\end{rmq}

Every locally compact Polish group admits an essentially free measure-preserving action (see e.g. Proposition 1.2 in \cite{MR1250814}). We will now give  two concrete examples of measure-preserving actions.

\begin{exemple}
Suppose that $G$ is totally disconnected and non-compact. By van Dantzig's theorem, there exists a chain $(K_n)_{n\in\N}$ of compact open subgroups of $G$ such that $\cap_n K_n=\{1_G\}$. We let now $G$ act by permutations on the countable set $\sqcup_n G/K_n$. The associated  Bernoulli shift on $[0,1]^{\sqcup_n G/K_n}$ is essentially free since the $G$-action on $\sqcup_n G/K_n$ is faithful, and ergodic because every $G$-orbit on $\sqcup_n G/K_n$ is infinite.
\end{exemple}

\begin{exemple}\label{ex:lattices1}Suppose that $G$ has a lattice $\Gamma<G$, let $\lambda$ be a Haar measure on $G$ and let $D$ be a fundamental domain of the right $\Gamma$-action on $G$. Then any probability measure-preserving action of $\Gamma$ on $(X,\mu)$ induces a measure-preserving action of $G$ on $(X\times D,\mu\times \lambda|_D)$, see Definition 4.2.21 in \cite{MR776417}.
\end{exemple}

We will see in Section \ref{sec:cross-sec} that all actions of any locally compact Polish group can be decomposed as a product equivalence relation as in the previous example.

\subsection{Orbit full groups}\label{sectfull}

Let us start by recalling Dye's definition of full groups \cite{MR0131516}. A subgroup $\mathbb G\leq \Aut(X,\mu)$ is \textbf{full} if whenever $(A_n)$ is a partition of a full measure subset of $(X,\mu)$ and $(T_n)_{n\in\N}$ is a sequence of elements of $\mathbb G$, the new element $T\in\Aut(X,\mu)$ defined by 
$$T(x)=T_n(x)\text{ for all } x\in A_n$$
actually belongs to $\mathbb G$. A full group is \textbf{ergodic} if for all $A\subseteq X$, if for all $g\in\mathbb G$ one has $\mu(gA\bigtriangleup A)=0$, then $A$ has measure $0$ or $1$. Given a group $G\leq \Aut(X,\mu)$, there is a smallest full group containing $G$, denoted by $[G]$. If the corresponding $G$-almost action is ergodic, then $[G]$ is ergodic. The following proposition is well known in the case of full groups of ergodic measure-preserving equivalence relations. Its proof in the general case can be found in \cite[Lem. 3.2]{MR0131516}.  

\begin{prop}\label{prop: ergodic implies transitive full group}Let $\mathbb G\leq \Aut(X,\mu)$ be an ergodic full group. Then for any $A,B\subseteq X$ of same measure, there is $T\in \mathbb G$ such that $T(A)=B$ up to measure zero. 
\end{prop}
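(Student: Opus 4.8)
The plan is to build the required $T$ by an exhaustion argument, gluing together countably many elements of $\mathbb{G}$ that carry small pieces of $A$ onto small pieces of $B$, together with a complementary matching of $X\setminus A$ onto $X\setminus B$. Since $\mathbb{G}$ is full, any transformation obtained from such a countable gluing over a measurable partition automatically lies in $\mathbb{G}$; so it suffices to produce measure-preserving bijections $A\to B$ and $X\setminus A\to X\setminus B$ realized in this way. We may assume $0<\mu(A)=\mu(B)<1$, the remaining cases being trivial.

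The crux is a \emph{matching lemma}, and this is where ergodicity is used: for any two positive-measure sets $A_0,B_0$ there exist $g\in\mathbb{G}$ and a positive-measure $A'\subseteq A_0$ with $g(A')\subseteq B_0$. To prove it I would construct a $\mathbb{G}$-saturation of $A_0$. Among all countable subfamilies of $\mathbb{G}$, choose one $(g_i)$ for which $\mu\left(\bigcup_i g_i(A_0)\right)$ is maximal; such a maximizer exists by uniting a sequence of families whose measures approach the supremum. Writing $W:=\bigcup_i g_i(A_0)$, for every $g\in\mathbb{G}$ the set $g(W)=\bigcup_i (g g_i)(A_0)$ is again a countable union of $\mathbb{G}$-translates of $A_0$, so it cannot increase the measure beyond the maximum; hence $\mu(g(W)\setminus W)=0$, and applying the same to $g^{-1}$ gives $\mu(g(W)\bigtriangleup W)=0$. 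Thus $W$ is $\mathbb{G}$-invariant modulo null sets and of positive measure, so ergodicity forces $\mu(W)=1$. Then $W$ meets $B_0$ in positive measure, so some $g_i$ satisfies $\mu(g_i(A_0)\cap B_0)>0$, and $A':=A_0\cap g_i^{-1}(B_0)$ does the job. I expect this saturation step — making the saturation meaningful and measurable despite $\mathbb{G}$ being uncountable — to be the main subtlety of the whole argument.

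With the matching lemma established, I would run a routine maximality argument on \emph{partial matchings}: pairs $(A^*,\phi)$ where $A^*\subseteq A$ is measurable and $\phi\colon A^*\to B$ is an injection that agrees with elements of $\mathbb{G}$ on the pieces of some countable partition of $A^*$. Note that $\phi$ is measure-preserving, so at every stage $\mu(A\setminus A^*)=\mu(B\setminus\phi(A^*))$. A standard exhaustion (repeatedly enlarging the domain by the matching lemma, each time adding a piece of nearly maximal available measure, and passing to the union) yields a partial matching $(A^*,\phi)$ of maximal domain-measure. If $\mu(A\setminus A^*)>0$, then $\mu(B\setminus\phi(A^*))>0$ as well, and the matching lemma applied to $A\setminus A^*$ and $B\setminus\phi(A^*)$ produces a positive-measure piece on which $\phi$ extends, contradicting maximality. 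Hence $\phi$ is a measure-preserving bijection of $A$ onto $B$ modulo null sets, realized by $\mathbb{G}$.

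Finally I would apply the same exhaustion to the complements $X\setminus A$ and $X\setminus B$, which have equal positive measure and to which ergodicity applies verbatim, obtaining a measure-preserving bijection $\psi\colon X\setminus A\to X\setminus B$ realized by $\mathbb{G}$. Gluing $\phi$ on $A$ with $\psi$ on $X\setminus A$ gives a measure-preserving bijection $T$ of $X$ assembled from countably many elements of $\mathbb{G}$ over a measurable partition; by fullness $T\in\mathbb{G}$, and $T(A)=B$ up to measure zero, as required.
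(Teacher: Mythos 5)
Your proof is correct. The paper itself gives no argument for this proposition --- it simply cites Dye \cite[Lem. 3.2]{MR0131516} --- and what you have written (a matching lemma proved by saturating $A_0$ under a measure-maximizing countable subfamily of $\mathbb G$ and applying ergodicity, then a greedy exhaustion producing a maximal partial matching, then gluing via fullness) is essentially a self-contained reconstruction of that classical argument; in particular, your use of a countable greedy scheme with pieces of nearly maximal measure, rather than Zorn's lemma, is the right way to make the maximality step legitimate.
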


For a measure-preserving action of a Polish group $G$ on $X$, we will denote by $\mathcal R_G$ the \textit{orbit equivalence relation}, \[\mathcal R_G:=\left\lbrace (x,y)\in X\times X:\ \text{ there exists }g\in G\text{ such that }g\cdot x=y\right\rbrace.\]

We recall now the definition of orbit full groups and their Polish topology (see \cite{Carderi:2014qr} for more details and proofs). 
\begin{df}
  Let $G$ be a Polish group. The \textbf{orbit full group} of a probability measure-preserving action of $G$ on $(X,\mu)$ is the group \[[\mathcal R_G]:=\{T\in \Aut(X,\mu):\ T(x)\in G\cdot x\text{ for every }x\in X\}.\] 
\end{df}

Let $d_G$ be a compatible, right-invariant and bounded metric on $G$. We denote by $\LL^0(X,\mu,G)$ the space of measurable functions from $X$ to $G$ which we equip with the metric \[\tilde d_G(f,g):=\int_X d_G(f(x),g(x)) d\mu(x).\]

The topology induced by this metric only depends on the topology of $G$. It is a Polish topology called the topology of \textit{convergence in measure}. 

For a probability measure-preserving action of $G$ on $(X,\mu)$, for every measurable subset $A\subset X$ and measurable function $f:A\to G$, we define \[\Phi(f):A\to X,\text{ by }\Phi(f)(x)=f(x)x,\]
and we put $\widetilde{[\mathcal R_G]}:=\Phi^{-1}([\mathcal R_G])$.

The Polish space $\widetilde{[\mathcal R_G]}$ equipped with the product $f\cdot g(x)=f(g(x)x)g(x)$ becomes a Polish group for the topology of convergence in measure. Moreover the map $\Phi:\widetilde{[\mathcal R_G]}\to[\RR_G]$ is a group homomorphism with respect to this product.

The topology of \textbf{convergence in measure} on $[\mathcal R_G]$ is the quotient topology induced by $\Phi$ and we proved in Theorem 1 of \cite{Carderi:2014qr} that is a Polish group topology. 

\begin{rmq}
  If the action of $G$ is essentially free, then the map $\Phi$ is a bijection, so the convergence in measure on $[\mathcal R_G]$ is given by the metric $\tilde d_G$. 
\end{rmq}

We also recall that full groups of locally compact Polish groups are complete invariants of orbit equivalence. 

\begin{df}
  A probability measure-preserving action of the group $G$ on $(X,\mu)$ is \textbf{orbit equivalent} to a probability measure-preserving action of the group $H$ on $(Y,\nu)$ if there exists a subset of full measure $A\subset X$ and a measure-preserving Borel bijection $\ph:A\to Y$ such that \[\ph\times \ph(\mathcal R_G\cap (A\times A))=\mathcal R_H\cap (\ph(A)\times \ph(A)).\]
\end{df}

We will also say that the equivalence relations $R_G$ and $R_H$ are \textit{isomorphic up to measure zero}. We recall the following theorem.

\begin{thm}[{\cite[Thm. 3.26]{Carderi:2014qr}}]
  Let $G$ and $H$ be locally compact Polish groups acting on the probability space $(X,\mu)$ preserving the measure. Then the actions are orbit equivalent if and only if the associated orbit full groups are isomorphic.
\end{thm}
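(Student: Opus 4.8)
For the forward implication, suppose the actions are orbit equivalent, witnessed by a full-measure subset $A\subseteq X$ and a measure-preserving Borel bijection $\ph\colon A\to Y$ with $\ph\times\ph(\RR_G\cap(A\times A))=\RR_H\cap(\ph(A)\times\ph(A))$. I would define $\Psi\colon[\RR_G]\to[\RR_H]$ by $\Psi(T):=\ph\circ T\circ\ph^{-1}$. Since $\ph$ carries $\RR_G$-classes onto $\RR_H$-classes, for a.e.\ $y$ the pair $(y,\Psi(T)(y))$ lies in $\RR_H$, so $\Psi(T)\in[\RR_H]$; clearly $\Psi$ is a group homomorphism with inverse conjugation by $\ph^{-1}$. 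The only point requiring care is that $\Psi$ is a homeomorphism for the topologies of convergence in measure, which follows from the fact that this topology is intrinsic to the measured equivalence relation and is therefore preserved by any isomorphism of equivalence relations.

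For the converse, which is the heart of the matter, I would show that any isomorphism of topological groups $\Psi\colon[\RR_G]\to[\RR_H]$ is \emph{spatial}, i.e.\ implemented by a measure isomorphism of the underlying spaces. The key is to recover the measure algebra $\MAlg(X,\mu)$ together with the action of the full group on it, purely from the abstract group $[\RR_G]$. To each measurable $A$ I associate the subgroup $\Gamma_A:=\{T\in[\RR_G]:\ T=\id\text{ off }A\}$ of transformations supported in $A$. Then $A\subseteq B$ (mod null) iff $\Gamma_A\subseteq\Gamma_B$, and---using aperiodicity of $\RR_G$ to guarantee enough room inside overlaps---$A\cap B$ is null iff $\Gamma_A$ and $\Gamma_B$ commute elementwise. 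This lets me recover the Boolean algebra $\MAlg(X,\mu)$ from the lattice of support subgroups and their commutation relations, while the topology of convergence in measure recovers the measure on this Boolean algebra. Applying the same construction to $[\RR_H]$ and transporting along $\Psi$ yields a measure-preserving Boolean isomorphism $\theta\colon\MAlg(X,\mu)\to\MAlg(Y,\nu)$, which by von Neumann's theorem is induced by a measure isomorphism $\ph\colon(X,\mu)\to(Y,\nu)$.

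It then remains to check that $\Psi$ is exactly conjugation by $\ph$ and that $\ph$ is an orbit equivalence. For the first point, I would note that the action of any $T$ on the measure algebra is encoded group-theoretically via $T\Gamma_A T^{-1}=\Gamma_{T(A)}$; comparing this identity for $\Psi(T)$ and for $\ph T\ph^{-1}$ shows that both induce the same automorphism of $\MAlg(Y,\nu)$, hence are equal since $\Aut(Y,\nu)$ embeds into $\Aut(\MAlg(Y,\nu))$. For the second point, since every $g\in G$ acts as an element of $[\RR_G]$, one has $\RR_G=\bigcup_{T\in[\RR_G]}\mathrm{graph}(T)$ up to null sets, and likewise for $H$; as $\ph\times\ph$ sends $\mathrm{graph}(T)$ onto $\mathrm{graph}(\Psi(T))$, it carries $\RR_G$ onto $\RR_H$, which is precisely orbit equivalence. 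The main obstacle is this reconstruction step: establishing the group-theoretic characterization of the support subgroups and the recovery of the Boolean algebra together with its measure, which is the locally compact (uncountable-class) analogue of Dye's theorem and where the aperiodicity of $\RR_G$ must be used carefully.
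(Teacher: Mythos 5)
A preliminary remark on the comparison itself: this paper does \emph{not} prove the statement you were given; it is quoted as an external result from \cite{Carderi:2014qr}, so there is no in-paper proof to measure your attempt against. Judged on its own terms, your proposal identifies the right family of ideas (a Dye-type reconstruction), but it has genuine gaps at exactly the two places where the real work lies.

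First, the reconstruction step. The properties you state --- ``$A\subseteq B$ mod null iff $\Gamma_A\subseteq\Gamma_B$'' and ``$A\cap B$ null iff $\Gamma_A$ and $\Gamma_B$ commute elementwise'' --- are relations \emph{among} the subgroups $\Gamma_A$; they do not characterize the family $\{\Gamma_A\}$ inside the lattice of all subgroups of $[\RR_G]$. Without a purely group-theoretic (or topological-group-theoretic) characterization of which subgroups are support subgroups, you cannot conclude that $\Psi$ carries support subgroups of $[\RR_G]$ to support subgroups of $[\RR_H]$, and the construction of the Boolean isomorphism $\theta$ never gets off the ground. That characterization is the heart of Dye's reconstruction theorem, i.e.\ essentially the theorem to be proved; asserting it is circular. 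Moreover, both of your criteria fail on any positive-measure set where $\RR_G$ is trivial (there $\Gamma_E=\{\id\}$ commutes with everything and is contained in everything), and aperiodicity is \emph{not} among the stated hypotheses: as literally quoted (no essential freeness, no ergodicity) the statement even admits counterexamples --- two actions whose orbit relations have ``trivial parts'' of different measures have isomorphic orbit full groups (extend Dye's spatial isomorphism of the ergodic parts by the identity) but cannot be orbit equivalent. So the hypotheses under which your claims can be proved must be isolated and used, not just alluded to. The same applies to your assertions that the topology of convergence in measure is ``intrinsic'' (needed in the forward direction) and that it ``recovers the measure'': for non-discrete $G$ the metric $\tilde d_G(T,\id)$ does not compute $\mu(\supp T)$, so neither claim is immediate.

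Second, and specific to the locally compact setting, the last step would fail as written even if the spatial implementation were granted. The identity $\Psi(T)=\ph\circ T\circ\ph\inv$ holds, for each single $T$, only up to a $\mu$-null set depending on $T$. You then write $\RR_G=\bigcup_{T\in[\RR_G]}\mathrm{graph}(T)$ and push the relation forward graph by graph; but this union is uncountable (already $\RR_G=\bigcup_{g\in G}\mathrm{graph}(g)$ is an uncountable union when $G$ is non-discrete), so the exceptional null sets cannot be discarded. This is precisely where the classical countable argument breaks: for a countable group $\Gamma$, $\RR_\Gamma$ is a countable union of graphs and the null sets combine into one. In the locally compact case one needs an additional argument --- e.g.\ pass to a countable dense subgroup $G_0\leq G$, obtain a single conull set on which $\ph\times\ph(\RR_{G_0})\subseteq\RR_H$, and then upgrade this to all of $\RR_G$ by a Fubini argument with respect to the Haar system $(\lambda_x)$ on $\RR_G$ together with a Steinhaus/Weil-type fact (conull subsets $E\subseteq G$ satisfy $EE=G$), since $\ph$ is only measurable and one cannot pass to limits $g_n\to g$ pointwise. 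Your sentence ``it carries $\RR_G$ onto $\RR_H$, which is precisely orbit equivalence'' conceals exactly the difficulty that distinguishes this theorem from Dye's.
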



\subsection{Cross-sections and product decomposition}\label{sec:cross-sec}

We present now the most important property of measure-preserving actions of locally compact Polish groups: the existence of a \textit{cross-section}. 

\begin{df}\label{df:crosssection}
  Consider an essentially free, measure-preserving action of a locally compact Polish group $G$ on a standard probability space $(X,\mu)$. A Borel subset $Y\subset X$ is a \textbf{cross-section} of the action if there exists a neighborhood of the identity $U\subset G$ such that the map $\theta:U\times Y\to X$ defined by $\theta(u,y):=uy$ is injective and such that $\mu(X\setminus GY)=0$. 
\end{df}

The existence of cross section was proved by Forrest in \cite[Proposition 2.10]{MR0417388} in the more general context of \textit{non-singular} actions. For a more recent proof, we invite the reader to read Theorem 4.2 of \cite{Kyed:2013lq}. The following theorem is essentially a version of \cite[Proposition 2.13]{MR0417388} in the context of a measure-preserving action of a unimodular locally compact group. 

\begin{thm}\label{thm:OEprod}
Let $G$ be a unimodular, locally compact, non-compact and non-discrete Polish group, denote by $\lambda$ a Haar measure on $G$. Consider a measure-preserving, essentially free and ergodic action of $G$ on the standard probability space $(X,\mu)$. 
\begin{enumerate}[(i)]
\item There exists a standard probability space $(Y,\nu)$ and a countable group $\Gamma$ acting on $(Y,\nu)$ by measure-preserving transformations such that the action of $G$ is orbit equivalent to the product action of $\mathbb S^1\times \Gamma$ on $(\mathbb S^1\times Y,L\times \mu)$, where $\mathbb S^1$ is the circle group\footnote{Actually, any infinite compact metrizable group will do; the point is that the orbit equivalence relation associated to $\mathbb S^1\act\mathbb S^1$ is transitive.} acting on itself by translation and $L$ is its normalized Lebesgue measure. 

 \item Identifying $G\times X$ to $\mathcal R_G$ via the map $(g,x)\mapsto (g,g\cdot x)$, one can choose an orbit equivalence map $\Theta:(\circle\times Y,h\times \nu)\to (X,\mu)$ such that the induced map between equivalence relations \begin{align*}
 \Theta\times\Theta: &(\mathcal R_\circle\times\mathcal R_\Gamma,L\times L\times\tilde{\nu})\to (\mathcal R_G,\Lambda\times\mu)\end{align*} is measure-preserving, where $\tilde{\nu}$ is the $\sigma$-finite measure induced by $\nu$ on $\RR_\Gamma$ via integration of the counting measure of the fibers. 
 
 \item The group $G$ is amenable if and only if  the orbit equivalence relation induced by $\Gamma$ on $(Y,\nu)$ is amenable.
\end{enumerate}
\end{thm}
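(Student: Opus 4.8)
The plan is to build the orbit equivalence from a cross-section and then read off the three statements. First I would invoke Forrest's theorem (\cite{MR0417388}, see also \cite{Kyed:2013lq}) to fix a cross-section $Y\subset X$ together with a neighbourhood $U$ of the identity for which $\theta\colon U\times Y\to X$, $(u,y)\mapsto uy$, is injective with $\mu(X\setminus GY)=0$. Using unimodularity of $G$, I would equip $Y$ with its cross-section measure $\nu$, characterised by the requirement that $\theta$ push $\tfrac1{\lambda(U)}\lambda|_U\times\nu$ forward to $\mu$ on the tube $\theta(U\times Y)$; unimodularity is what guarantees this $\nu$ is well defined, independent of the chosen small neighbourhood, and that the resulting transverse structure is measure preserving rather than merely nonsingular.

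Next I would analyse the transverse relation $\RR_G|_Y:=\RR_G\cap(Y\times Y)$. Because $Y$ is a cross-section, each $G$-orbit meets $Y$ in a discrete, hence countable, set, so $\RR_G|_Y$ is a countable Borel equivalence relation on $(Y,\nu)$; a short computation with the local product structure above shows it preserves $\nu$. By Feldman--Moore it is generated by a countable group $\Gamma$ acting on $(Y,\nu)$ by measure-preserving transformations, which produces the $\Gamma$ of the statement and identifies $\RR_\Gamma$ with $\RR_G|_Y$.

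The heart of the argument is the construction of the orbit equivalence $\Theta$ and the verification of (ii). Since $G$ is non-compact it has infinite Haar measure, and since it is non-discrete and unimodular each free orbit $G\cdot y\cong G$ is a non-atomic standard measure space of infinite measure; I would partition each orbit measurably into countably many blocks of equal normalised mass indexed by the transverse points $\Gamma\cdot y$, and choose for each block a measure isomorphism onto $\circle$. Transitivity of $\RR_\circle$ is exactly what lets me ignore any internal structure of a block: within a single $G$-orbit all points are already equivalent, so the circle factor only has to reproduce the continuous Haar direction. Assembling these block isomorphisms over a fundamental domain of $\RR_\Gamma$ yields $\Theta\colon(\circle\times Y,L\times\nu)\to(X,\mu)$ sending $\circle\times\{y\}$ into $G\cdot y$, and then $(s,y)\sim(s',y')$ in $\RR_\circle\times\RR_\Gamma$ iff $y\sim_\Gamma y'$ iff $\Theta(s,y),\Theta(s',y')$ lie on a common $G$-orbit, which is (i). The measure-preservation in (ii), relating $L\times L\times\tilde\nu$ on $\RR_\circle\times\RR_\Gamma$ to $\Lambda\times\mu$ on $\RR_G$, follows once the block masses are normalised by the covolume constant $\covol$; here unimodularity re-enters to ensure that Haar measure on the orbit direction matches the product of Lebesgue measure on $\circle$ with the fibre-counting measure defining $\tilde\nu$. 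This bookkeeping --- matching Haar measure on each orbit with $L$ times the counting measure of the transversal, uniformly in the orbit --- is the step I expect to be the main obstacle, and the place where every hypothesis on $G$ is used.

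Finally, (iii) is soft. Amenability of an equivalence relation is invariant under isomorphism up to measure zero, so by (i) $\RR_G$ is amenable iff $\RR_\circle\times\RR_\Gamma$ is; since $\RR_\circle$ is transitive, hence hyperfinite and amenable, the product is amenable iff $\RR_\Gamma$ is. On the other hand, for an essentially free p.m.p. action the orbit relation $\RR_G$ is amenable iff the acting group $G$ is amenable, by Zimmer's theorem. Combining the two equivalences gives that $G$ is amenable iff $\RR_\Gamma$ on $(Y,\nu)$ is amenable, which is (iii).
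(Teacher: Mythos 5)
Your strategy coincides with the paper's up to the crucial step: you fix a cross-section $Y$ with injective tube map $U\times Y\to X$, use unimodularity to produce the invariant transverse probability measure $\nu$, and identify the restricted relation $\RR_G\cap(Y\times Y)$ with an $\RR_\Gamma$ via Feldman--Moore; all of this matches the paper (which quotes these facts from \cite{Kyed:2013lq}). Your part (iii) is also correct, though it takes a slightly heavier route: the paper simply cites the equivalence ``$G$ amenable $\iff$ the cross-section relation is amenable'' from \cite{Kyed:2013lq}, whereas you re-derive it from Zimmer's theorem together with invariance of amenability under orbit equivalence and under products with the transitive relation.

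The genuine gap is exactly at the step you yourself flag as the main obstacle, and the device you propose for it fails. You want to partition each $G$-orbit into blocks of equal mass indexed by the transverse points and to assemble the block isomorphisms ``over a fundamental domain of $\RR_\Gamma$.'' But $\RR_\Gamma$ is ergodic (since the $G$-action is) with almost everywhere infinite classes (since $G$ is non-compact), hence non-smooth: it admits no Borel fundamental domain or transversal, so no measurable per-orbit choice of base point or of block isomorphisms is available. Moreover, the tube $U\cdot Y$ is in general only a positive-measure part of $X$, so the blocks cannot simply be the tube pieces $U\cdot y'$; one needs a measurable, globally defined mechanism that absorbs $X\setminus U\cdot Y$ uniformly into the blocks. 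This is the paper's key idea, which your sketch is missing: shrink $U$ so that $\mu(U\cdot Y)=1/K$ for an integer $K$, set $A=U\cdot Y$, and use ergodicity (via Proposition \ref{prop: ergodic implies transitive full group}) to find $T\in[\RR_G]$ of order $K$ such that $\{A,T(A),\dots,T^{K-1}(A)\}$ partitions $X$; then $(k,u,y)\mapsto T^k(u\cdot y)$ is an orbit equivalence of $\Z/K\Z\times U\times Y$ with $X$ (it preserves orbits because $T\in[\RR_G]$ does), and composing with a measure isomorphism $(\circle,L)\cong(\Z/K\Z\times U,c\times\lambda_U)$ yields $\Theta$. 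This construction simultaneously produces your blocks $B_{y'}=\bigcup_k T^k(U\cdot y')$ and their measurable identifications with $\circle$, and the covolume normalization of $\nu$ then makes the verification of (ii) a direct computation (or an application of the uniqueness of the Haar system, Theorem \ref{thm:haaruni}). Without this step, or some substitute for it, your map $\Theta$ has not actually been constructed.
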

\begin{proof} 
  In this proof we will use the notations and conventions of Proposition 4.3 of \cite{Kyed:2013lq}. Let $Y\subset X$ be a cross section and let $U\subset G$ be a neighborhood of the identity as in Definition \ref{df:crosssection}. We consider the restriction of $\RR_G$ to $Y$, \[\mathcal R:=\{(y,y')\in Y\times Y : \exists g\in G, y'=gy\}.\]

  By \cite[Proposition 4.3.1]{Kyed:2013lq}, $\mathcal R$ is a Borel, countable equivalence relation on $Y$. Define $\Psi:G\times X\to \RR_G$ by $\Psi(g,x)=(gx,x)$ and observe that since the action is free, $\Psi$ is a bijection. Denote by $\Psi_Y$ the restriction of $\Psi$ to $G\times Y$ and put $Z=\Psi_Y(G\times Y)$. Observe that the projection $\pi:Z\to X$ on the first coordinate is countable-to-one hence we can define a measure $\eta$ on $Z$ by integrating with respect to $\mu$ the counting measure over the projection $\pi$. By definition we have $\mu(UY)=\eta(\Psi_Y(U\times Y))$. Put $\covol(Y):=\lambda(U)/\mu(UY)$. As explained in the proof of \cite[Proposition 4.3.2]{Kyed:2013lq}, the unicity of the Haar measure on $G$ implies the existence a probability measure $\nu$ on $Y$ such that $\Psi_*(\lambda\times (\nu/\covol(Y)))=\eta$. 

Moreover by \cite[Proposition 4.3]{Kyed:2013lq}, we know that

 \begin{enumerate}[(1)]
 \item the probability measure $\nu$ is $\mathcal R$-invariant,
 \item $(\mathcal R,\nu)$ is ergodic if and only if the action of $G$ is ergodic,
 \item $(\mathcal R,\nu)$ has infinite orbits almost everywhere if and only if $G$ is non-compact,
 \item $(\RR, \nu)$ is amenable if and only if $G$ is.
 \end{enumerate}

By property $(3)$ above, we deduce that $(Y,\nu)$ is diffuse. Moreover since $\mathcal R$ is countable and measure-preserving, Feldman and Moore's result (\cite[Theorem 1]{MR0578656}) gives us a measure-preserving action of a countable group $\Gamma$ on $(Y,\nu)$ which induces the equivalence relation $\mathcal R$. 

Up to taking an open subset of $U$, we may assume that $\mu( U\cdot Y)=\frac 1 K$ for some integer $K\in\N$. Set $A=U\cdot Y$. By ergodicity of $G$, we can find $T\in[\mathcal R_G]$ of order $K$ such that $\{A,T(A),...,T^{K-1}(A)\}$ is a partition of a full measure subset of $X$. Let us denote by $c$ the counting measure on $\Z/K\Z$ and consider the equivalence relation $\mathcal S'$ on $(\Z/K\Z\times U\times Y,c\times\lambda_U\times \nu)$ defined by \[(k,u,y)\mathcal S (k',u',y')\text{ if }y\mathcal R y'.\]

The measure-preserving map 
\begin{align*}
&\Theta': (\Z/K\Z\times U\times Y,c\times\lambda_U\times \nu)\to (X,\mu)\\
&\Theta'(k,u,y):=T^k(u\cdot y),
\end{align*}
defines an orbit equivalence between $\mathcal S'$ and $\mathcal R_G$. Denote by $L$ the normalized Lebesgue measure on the circle $\mathbb S^1$ and fix a measure-preserving isomorphism \[\alpha:(\mathbb S^1,L) \to (\Z/K\Z\times U,c\times \lambda_U).\]

Let $\mathcal S$ be the equivalence relation induced by the action of $\Gamma\times\circle $ on $Y\times \circle$ where $\Gamma$ acts on $\Gamma$ and $\circle$ acts on itself by translation. Observe that $\alpha$ induces an orbit equivalence between $\mathcal S'$ and $\mathcal S$, which combined with $\Theta'$ gives an orbit equivalence $\Theta$ between $\mathcal S$ and $\RR_G$ and hence $(i)$ is proved.

Now $(ii)$ can be deduced by an easy computation; it is also a direct application of the uniqueness of the Haar measure on $\mathcal R_G$  (see Theorem \ref{thm:haaruni}). Condition $(iii)$ follows from property $(4)$.
%
%
%
\end{proof}

\subsection{Weak orbit equivalence versus orbit equivalence}

Let  $\mathcal R$ be a Borel equivalent relation on $X$ and $A\subseteq X$ be a Borel subset with positive measure. The\textbf{restriction} of $\mathcal R$ to $A$ is the equivalence relation $\mathcal R\cap (A\times A)$ on the standard probability space $(A,\mu_A)$ where the measure $\mu_A$ is defined by: for all Borel $B\subseteq A$, $\mu_A(B)=\frac{\mu(B)}{\mu(A)}$.
Let us recall two important definitions. 
\begin{df} Let $\Gamma$ and $\Lambda$ be two countable groups.
  \begin{itemize}
  \item   The groups $\Gamma$ and $\Lambda$ are \textbf{measure equivalent} if there exists a standard $\sigma$-finite measured space $(\Omega,m)$ and commuting measure-preserving actions of $\Gamma$ and $\Lambda$ on $(\Omega,m)$ which are essentially free and admit a fundamental domain with finite measure.
  \item   The groups $\Gamma$ and $\Lambda$ are \textbf{weakly orbit equivalent} if $\Gamma$ and $\Lambda$ admit measure preserving essentially free ergodic actions on probability spaces $(X,\mu)$ and $(Y,\nu)$ such that there exist measurable subsets $A\subset X$ and $B\subset Y$ such that $\RR_\Gamma$ restricted to $A$ is orbit equivalent to $\RR_\Lambda$ restricted to $B$. The quantity $\mu(A)/\nu(B)$ is called the \textit{coupling constant} of the stable orbit equivalence. 
  \end{itemize}
\end{df}

\begin{rmq}An application of the ergodic decomposition theorem yields than one can drop the ergodicity assumption in the definition of weak orbit equivalence. 
\end{rmq}

Furman proved in \cite{MR1740985} that two countable groups are measure equivalent if and only if 
they are weakly orbit equivalent. Let us now study these notions for non-discrete locally compact groups.

\begin{df}Let $G$ and $H$ be two Polish locally compact groups.
\begin{itemize}
 \item The groups $G$ and $H$ are \textbf{orbit equivalent} if they admit ergodic measure-preserving essentially free actions which are orbit equivalent. 
 \item The groups $G$ and $H$ are \textbf{weakly orbit equivalent} if they admit ergodic measure-preserving essentially free actions on $(X,\mu)$ such that there exist positive measurable subsets $A\subset X$ and $B\subset X$ such that $\RR_G$ restricted to $A$ is orbit equivalent to $\RR_G$ restricted to $B$.
 \end{itemize}
\end{df}

Note that a non-discrete locally compact group is never weakly orbit equivalent to a discrete one.

\begin{lem}Let $G$ be a non-discrete locally compact Polish group acting essentially freely ergodically in a measure-preserving manner on $(X,\mu)$. Then for every Borel subset $A\subseteq X$ of positive measure, $\mathcal R_G$ is orbit equivalent to its restriction to $A$.
\end{lem}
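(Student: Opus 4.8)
The plan is to transport the statement across the product decomposition of Theorem~\ref{thm:OEprod} and then exploit the transitivity of the circle factor. If $G$ is compact the action is essentially transitive, so $\RR_G$ and its restriction to $A$ are both the full relation (all points equivalent) and the claim is immediate; I therefore assume $G$ non-compact (recall that a free measure-preserving action of a locally compact group forces $G$ to be unimodular), so that Theorem~\ref{thm:OEprod} applies. That theorem provides an orbit equivalence $\Theta$ identifying $\RR_G$ on $(X,\mu)$ with $\RR_{\circle}\times\RR_\Gamma$ on $(\circle\times Y,L\times\nu)$, where $(Y,\nu)$ is diffuse, $\RR_\Gamma$ is a countable ergodic measure-preserving equivalence relation with infinite classes (properties (2) and (3) there), and $\RR_{\circle}$ is the \emph{transitive} relation $\circle\times\circle$. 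Writing $A':=\Theta^{-1}(A)$, the map $\Theta$ restricts to an orbit equivalence $\RR_G|_A\cong(\RR_{\circle}\times\RR_\Gamma)|_{A'}$, so it suffices to prove that this restriction is isomorphic, up to measure zero, to $\RR_{\circle}\times\RR_\Gamma$ itself.

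Next I would normalize $A'$ to a box. Since the product relation is ergodic and $(Y,\nu)$ is diffuse, choose $B\subseteq Y$ with $\nu(B)=(L\times\nu)(A')$ (if this number is $1$ there is nothing to prove). By Proposition~\ref{prop: ergodic implies transitive full group} there is an element $T$ of the full group of $\RR_{\circle}\times\RR_\Gamma$ with $T(A')=\circle\times B$ modulo null sets; as $T\times T$ preserves the ambient relation, it conjugates $(\RR_{\circle}\times\RR_\Gamma)|_{A'}$ onto $(\RR_{\circle}\times\RR_\Gamma)|_{\circle\times B}=\RR_{\circle}\times(\RR_\Gamma|_B)$. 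Thus the whole statement reduces to the single \emph{absorption} isomorphism
\[
\RR_{\circle}\times(\RR_\Gamma|_B)\;\cong\;\RR_{\circle}\times\RR_\Gamma .
\]

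To build this isomorphism I would use that each $\RR_\Gamma$-class meets $B$ in infinitely many points (ergodicity together with infinite classes). Concretely, by Feldman--Moore write $\RR_\Gamma=\bigcup_k\mathrm{graph}(\gamma_k)$ and disjointify the sets $\gamma_k^{-1}(B)$ to obtain a Borel partition $Y=\bigsqcup_k Y_k$ and partial $\RR_\Gamma$-isomorphisms $\theta_k:=\gamma_k|_{Y_k}:Y_k\to B_k\subseteq B$, whose ranges cover $B$ with total multiplicity $\int_B m\, d\nu=1$, where $m(b)=\#\{k:b\in B_k\}$. I would then define $\Phi:\circle\times Y\to\circle\times B$ so that on each slab $\circle\times Y_k$ the second coordinate is sent by $\theta_k$ into $B_k$ while the circle coordinate is shuffled measurably. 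Because every $\theta_k$ keeps points inside their $\RR_\Gamma$-class, $\Phi$ automatically carries the $\RR_\Gamma$-part of the relation to the $\RR_\Gamma|_B$-part, and any rearrangement of the circle coordinate is harmless since $\RR_{\circle}$ is the full relation on $\circle$; so $\Phi$ will be an orbit equivalence as soon as it is a measure-preserving bijection onto $(\circle\times B,L\times(\nu|_B/\nu(B)))$.

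The main obstacle is precisely this last measure-theoretic bookkeeping: the sets $B_k$ overlap inside $B$ with total multiplicity $1/\nu(B)$, which is generally not an integer, so one cannot simply tile each fibre $\circle\times\{b\}$ by finitely many full circles. This is exactly where non-discreteness enters in an essential way: since the circle factor is \emph{continuous} and its relation transitive, one may split the fibre $\{y\}\times\circle$ over $y$ into arcs distributed among the several points $\theta_k(y)\in B$ of the same $\RR_\Gamma$-class, matching the source measure $L\times\nu$ to the target measure $L\times(\nu|_B/\nu(B))$ by a Hall-type exhaustion argument with no integrality constraint. (For a discrete acting group the circle factor is absent and the statement indeed fails, the cost of $\RR_\Gamma|_B$ differing from that of $\RR_\Gamma$.) Carrying out this measurable matching produces the desired $\Phi$ and completes the proof.
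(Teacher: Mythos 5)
Your overall architecture is sound, and your side remarks (the compact case, unimodularity, and the failure for discrete groups via cost) are all correct; the normalization of $A'$ onto a rectangle via Proposition \ref{prop: ergodic implies transitive full group} is also exactly the right move. The genuine gap is the final ``absorption'' isomorphism $\RR_\circle\times(\RR_\Gamma|_B)\cong\RR_\circle\times\RR_\Gamma$, in which you have concentrated all the difficulty of the lemma and which you never actually prove. Note that this statement \emph{is} an instance of the lemma itself (take $A=\circle\times B$), so reducing to it is no reduction at all unless the construction is carried out. What you offer — the partial isomorphisms $\theta_k$ from Feldman--Moore plus a ``Hall-type exhaustion argument with no integrality constraint'' splitting circle fibres into arcs — is a plausible plan, but its substance is missing: one must produce, measurably over $Y$ (via Lusin--Novikov/Feldman--Moore enumerations of each class $[y]_{\RR_\Gamma}$ and of $[y]_{\RR_\Gamma}\cap B$), a fractional matching transporting unit mass at each point of the class to mass exactly $1/\nu(B)$ at each point of the class inside $B$; one must then convert this matching into arc decompositions in which a source arc of length $\ell$ is mapped affinely onto a target arc of length $\ell\,\nu(B)$ (this rescaling is forced by the normalization $\nu|_B/\nu(B)$ of the target measure, a point your sketch never addresses); and finally one must check that the resulting images tile $\circle\times B$ up to null sets and that the whole map is a Borel bijection. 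None of this is automatic, and as written the proof stops exactly where the lemma starts.

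The irony is that the difficulty disappears entirely if you cut the \emph{other} factor, which is what the paper does: choose $\tilde A\subseteq\circle$ with $L(\tilde A)=(L\times\nu)(A')$ and move $A'$ onto $\tilde A\times Y$ by an element of the ergodic full group. The restriction of $\RR_\circle\times\RR_\Gamma$ to $\tilde A\times Y$ is then $(\RR_\circle|_{\tilde A})\times\RR_\Gamma$, and $\RR_\circle|_{\tilde A}$ is the transitive relation on a diffuse standard probability space, hence orbit equivalent to $\RR_\circle$ via any measure-preserving isomorphism $(\tilde A,L_{\tilde A})\to(\circle,L)$; crossing with the identity of $(Y,\RR_\Gamma)$ gives the absorption with no matching argument whatsoever. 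So either switch to this normalization, or write out your measurable matching in full detail — as it stands, the key step is asserted rather than proved.
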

\begin{proof}
By Theorem \ref{thm:OEprod}, we can find a standard probability space $(Y,\nu)$ and a countable group $\Gamma$ acting on $(Y,\nu)$ by measure-preserving transformations such that the action $\mathbb S^1\times \Gamma$ on $(\mathbb S^1\times Y,L\times \mu)$ is orbit equivalent to $\mathcal R_G$. It thus suffices to show that for every measurable $A\subseteq \circle\times Y$ of positive measure, $\mathcal R_{\circle\times\Gamma}= \mathcal R_{\circle}\times\mathcal R_\Gamma$ is orbit equivalent to its restriction to $(A,\mu_A)$.

So let $A$ be a subset of $\mathbb S^1\times Y$ of positive measure. Let $\tilde A$ be a Borel subset of $\mathbb S^1$ with measure $\mu(A)$. Then since $\mathcal R_{\circle}$ is transitive it is orbit equivalent to its restriction to $\tilde A$, which in turn yields that $\mathcal R_\Gamma\times \mathcal R_{\circle}$ is orbit equivalent to its restriction to $\tilde A\times Y$.

Since the $G$-action is ergodic, there exists $\varphi\in[\mathcal R_{\circle\times \Gamma}]$ which maps a full measure subset of $\tilde A\times Y$ to a full measure subset of $A$ (see Prop. \ref{prop: ergodic implies transitive full group}), so that the restrictions of $\mathcal R_{\circle\times \Gamma}$ to $A$ and $\tilde A\times Y$ are orbit equivalent. We conclude that $\mathcal R_{\circle\times \Gamma}$ is orbit equivalent to its restriction to $A$, hence the same conclusion holds for $\mathcal R_G$.
\end{proof}

\begin{thm}\label{thm: OE equiv SOE}Let $G$ and $H$ be two non-discrete locally compact Polish groups. Then $G$ and $H$ are orbit equivalent if and only if they are weakly orbit equivalent.
\end{thm}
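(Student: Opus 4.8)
The plan is to prove the two implications separately, with the forward direction being essentially trivial and the reverse direction requiring the preceding lemma together with the product decomposition of Theorem \ref{thm:OEprod}.

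First I would observe that orbit equivalence immediately implies weak orbit equivalence: if $G$ and $H$ admit orbit equivalent ergodic essentially free actions, then taking $A=X$ and $B=X$ (the full space) exhibits a weak orbit equivalence with coupling constant $1$. This direction requires no work beyond unwinding the definitions, since a genuine orbit equivalence is just a weak orbit equivalence between restrictions to the whole space.

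For the reverse implication, suppose $G$ and $H$ are weakly orbit equivalent, so that they admit ergodic essentially free measure-preserving actions on $(X,\mu)$ and $(X',\mu')$ respectively, together with positive-measure Borel subsets $A\subset X$ and $B\subset X'$ such that $\RR_G|_A$ is orbit equivalent to $\RR_H|_B$. The key tool is the preceding lemma, which says that for a non-discrete locally compact Polish group acting essentially freely and ergodically, the orbit equivalence relation is orbit equivalent to \emph{any} of its positive-measure restrictions. Applying this lemma to $G$, we see $\RR_G$ is orbit equivalent to $\RR_G|_A$; applying it to $H$, we see $\RR_H$ is orbit equivalent to $\RR_H|_B$. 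Chaining these together with the given orbit equivalence $\RR_G|_A\sim\RR_H|_B$ and using transitivity of orbit equivalence, I obtain that $\RR_G$ is orbit equivalent to $\RR_H$, which is exactly the statement that $G$ and $H$ are orbit equivalent.

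I expect the main subtlety to be the bookkeeping around which actions witness each equivalence and the standard verification that orbit equivalence of measure-preserving equivalence relations is transitive (compositions of the orbit-equivalence isomorphisms on the relevant full-measure subsets remain measure-preserving Borel bijections conjugating the relations). The real content is entirely concentrated in the preceding lemma, whose proof relied on the product decomposition $\RR_G\cong\RR_{\circle\times\Gamma}$ and the transitivity of $\RR_\circle$; once that lemma is in hand, the present theorem is a short diagram chase through the chain of orbit equivalences. Thus there is no genuine obstacle here — the hard work was done in establishing that every positive-measure restriction recovers the full relation up to orbit equivalence.
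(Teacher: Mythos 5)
Your proof is correct and follows exactly the paper's route: the forward direction is definitional, and the converse chains the given weak orbit equivalence $\RR_G|_A\sim\RR_H|_B$ with the preceding lemma (each relation is orbit equivalent to any positive-measure restriction) via transitivity. The paper compresses this into a single sentence, so your write-up is simply a more explicit version of the same argument.
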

\begin{proof}
The direct implication is by definition and the converse is a straightforward application of the previous lemma.
\end{proof}

\begin{rmq}\label{rmq: OE and cost in lc setting}
Any reasonable definition of cost for non-discrete locally compact groups would thus only provide three distinct orbit equivalence classes: the ones with a corresponding $\mathcal R_\Gamma$ of cost 1, the ones with $\mathcal R_\Gamma$ of finite cost greater than $1$, and the ones with $\RR_\Gamma$ of infinite cost. This mirrors the first $\LL^2$ Betti number of locally compact unimodular groups, see \cite{MR3072156} and \cite{Kyed:2013lq}.
\end{rmq}

The following proposition is surely well-know to experts, but we were not able to find it in the literature. 
It guarantees that orbit equivalence for locally compact non discrete groups is at least as complicated as measure equivalence for countable groups. 

\begin{prop}\label{prop:meoe}
  Two countable groups $\Gamma$ and $\Lambda$ are measure equivalent if and only if the locally compact groups $\Gamma\times \circle$ and $\Lambda\times \circle$ are orbit equivalent.
\end{prop}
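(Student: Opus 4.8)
The strategy is to translate both sides into statements about weak orbit equivalence and to conclude with Furman's theorem \cite{MR1740985}, which identifies measure equivalence with weak orbit equivalence for countable groups. We may assume $\Gamma$ and $\Lambda$ are infinite: if both are finite then both assertions hold trivially (all finite groups are measure equivalent, and $\Gamma\times\circle$, $\Lambda\times\circle$ are compact with transitive ergodic orbit relations), while if exactly one is finite then both assertions fail (a finite and an infinite group are never measure equivalent, and a compact and a non-compact locally compact group, having respectively a transitive and a non-transitive ergodic orbit relation, are never orbit equivalent). For infinite $\Gamma$ the group $\Gamma\times\circle$ is unimodular, non-compact and non-discrete, so Theorem \ref{thm:OEprod} and the lemma preceding Theorem \ref{thm: OE equiv SOE} are available.

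For the direct implication, assume $\Gamma$ and $\Lambda$ are measure equivalent, hence weakly orbit equivalent by Furman: there are ergodic essentially free p.m.p.\ actions $\Gamma\act(W,\bar\mu)$ and $\Lambda\act(W',\bar\mu')$, positive-measure sets $A\subseteq W$ and $B\subseteq W'$, and a measure-preserving bijection $\ph\colon(A,\bar\mu_A)\to(B,\bar\mu'_B)$ with $(\ph\times\ph)(\RR_\Gamma\cap(A\times A))=\RR_\Lambda\cap(B\times B)$. I would then pass to the product actions $\Gamma\times\circle\act\circle\times W$ and $\Lambda\times\circle\act\circle\times W'$, which are ergodic and essentially free with orbit relations $\RR_{\circle}\times\RR_\Gamma$ and $\RR_{\circle}\times\RR_\Lambda$. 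The restriction of $\RR_{\circle}\times\RR_\Gamma$ to $\circle\times A$ equals $\RR_{\circle}\times(\RR_\Gamma\cap(A\times A))$, and $\id_{\circle}\times\ph$ carries it onto the restriction of $\RR_{\circle}\times\RR_\Lambda$ to $\circle\times B$. Since both product groups act ergodically and essentially freely and are non-discrete and locally compact, the lemma preceding Theorem \ref{thm: OE equiv SOE} makes each product relation orbit equivalent to its restriction to $\circle\times A$, resp.\ $\circle\times B$; chaining these three orbit equivalences yields $\RR_{\Gamma\times\circle}$ orbit equivalent to $\RR_{\Lambda\times\circle}$.

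For the converse, fix ergodic essentially free actions of $\Gamma\times\circle$ and $\Lambda\times\circle$ on $(X,\mu)$ and $(X',\mu')$ together with an orbit equivalence $\psi$ between their relations. First I would make the circle factor explicit: the restriction of the action to the direct factor $\circle$ is a free measure-preserving action of a compact group, hence admits a measurable fundamental domain and yields an isomorphism $(X,\mu)\cong(\circle\times W, L\times\bar\mu)$ with $W=X/\circle$, the circle acting by translation in the first coordinate. Because $\Gamma$ commutes with $\circle$ it descends to an ergodic, essentially free, measure-preserving action $\Gamma\act W$, and a short computation (the $\circle$-valued cocycle coupling the two factors leaves the orbit relation unchanged) gives $\RR_{\Gamma\times\circle}=\RR_{\circle}\times\RR_\Gamma$ on $\circle\times W$; symmetrically $\RR_{\Lambda\times\circle}=\RR_{\circle}\times\RR_\Lambda$ on $\circle\times W'$. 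It then suffices to show that $\RR_\Gamma$ on $W$ and $\RR_\Lambda$ on $W'$ are weakly orbit equivalent, since Furman's theorem would give $\Gamma$ measure equivalent to $\Lambda$.

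To obtain this weak orbit equivalence, note that $\{1\}\times W$ is a cross-section of the $\Gamma\times\circle$-action whose induced countable relation is exactly $\RR_\Gamma$, and likewise $\{1\}\times W'$ induces $\RR_\Lambda$ on the other side. Transporting $\{1\}\times W$ through the relation isomorphism $\psi$ produces a Borel complete section of $\RR_{\Lambda\times\circle}$ with countable restricted relation isomorphic to $\RR_\Gamma$. The key input is then that any two such sections of a fixed measure-preserving action of a locally compact Polish group give rise to stably (equivalently, weakly) orbit equivalent countable equivalence relations, which is part of the cross-section reduction theory of \cite{Kyed:2013lq}. Applied to $\psi(\{1\}\times W)$ and $\{1\}\times W'$ inside $\RR_{\Lambda\times\circle}$, it yields that $\RR_\Gamma$ and $\RR_\Lambda$ are weakly orbit equivalent. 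This last step is where I expect the main difficulty: an orbit equivalence of the products need not respect the circle direction, so one cannot simply quotient by the compact factor, and the well-definedness of the transverse countable relation up to weak orbit equivalence must be extracted from cross-section theory rather than from the elementary absorption argument used in the forward direction.
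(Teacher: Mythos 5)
Your forward implication (measure equivalence of $\Gamma,\Lambda$ implies orbit equivalence of the products) is correct and is exactly the paper's route: Furman gives weak orbit equivalence, this passes to the product actions because $\RR_\circle$ is transitive, and the restriction lemma preceding Theorem \ref{thm: OE equiv SOE} upgrades weak orbit equivalence to orbit equivalence. The converse, however, has a genuine gap at precisely the step you flagged as the ``main difficulty''. You transport the cross-section $\{1\}\times W$ through the orbit equivalence $\psi$ and then invoke, as key input, that any two Borel complete sections with countable restricted relation give stably orbit equivalent relations, attributing this to the cross-section theory of \cite{Kyed:2013lq}. But what that theory proves is uniqueness up to stable isomorphism of the relations restricted to \emph{cross-sections} in the sense of Definition \ref{df:crosssection}, i.e.\ lacunary sections: there must exist a neighborhood $U$ of the identity such that $(u,y)\mapsto uy$ is injective on $U\times Y$. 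The image $\psi(\{1\}\times W)$ has no reason to be lacunary: an orbit equivalence scrambles the parametrization of orbits, and in particular the circle direction. Writing the first coordinate of $\psi(1,w)$ as $z(w)\in\circle$, the image will typically contain pairs of points in the same orbit differing by circle elements $z(w')z(w)^{-1}$ accumulating at the identity, so no uniform $U$ exists. Hence the uniqueness theorem does not apply to the transported section, and the statement you actually need --- well-definedness up to stable orbit equivalence of the restricted relation for \emph{arbitrary} countable complete sections equipped with a transported invariant measure --- is not in the cited literature and is essentially as strong as the proposition being proved.

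The paper handles this direction with a short direct construction that needs neither Furman nor cross-sections: identify $\RR_{\Gamma\times\circle}$ with $(\Gamma\times\circle)\times X$ and equip it with its natural $\sigma$-finite (Haar) measure; let $\Gamma$ act on the left coordinate and let $\Lambda$ act on the right through the orbit equivalence. These two actions commute, are essentially free, and each admits a fundamental domain of finite measure (a copy of $\circle\times X$), so the relation itself is a measure equivalence coupling between $\Gamma$ and $\Lambda$. If you want to salvage your ``quotient by the compact factor'' picture, you would have to prove the uniqueness statement for non-lacunary sections yourself, and the natural proof of that is again this coupling argument --- so the detour through cross-section theory buys nothing here.
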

\begin{proof}
  Suppose that $\Gamma\times \circle$ admits an action on the probability space $(X,\mu)$ which is orbit equivalent to an action of $\Lambda\times\circle$. Then we can let $\Gamma$ act on $\RR_{\Gamma\times\circle}$ on the left and we can let $\Lambda$ act on the right via the orbit equivalence. These two actions commutes and in both cases a fundamental domain is given by $X\times \circle$ which has finite measure.

  Suppose now that the groups $\Gamma$ and $\Lambda$ are measure equivalent, then by \cite[Lem. 2.2.2]{MR1740986} and \cite[Lem. 3.2]{MR1740985}  they are weakly orbit equivalent. Then $\Gamma\times \mathbb S^1$ and $\Lambda\times \mathbb S^1$ are weakly orbit equivalent, hence orbit equivalent by Theorem \ref{thm: OE equiv SOE}.
  \end{proof}

Note that the groups $\Gamma\times\circle$ and $\Lambda\times\circle$ are unimodular, so Theorem \ref{thm:haaruni} applies. Therefore any orbit equivalence between them sends the Lebesgue measure of the circle group to a multiple of the Lebesgue measure of the other circle. This constant is the coupling constant of the induced measure equivalence between $\Gamma$ and $\Lambda$.

\begin{cor}There are uncountably many non-discrete locally compact groups up to orbit equivalence.
\end{cor}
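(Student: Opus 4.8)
The plan is to reduce the statement to a purely measured-group-theoretic fact via Proposition \ref{prop:meoe}. For every countable group $\Gamma$, the product $\Gamma\times\circle$ is a non-discrete (because $\circle$ is), unimodular, locally compact Polish group, and by Proposition \ref{prop:meoe} one has $\Gamma\times\circle$ orbit equivalent to $\Lambda\times\circle$ \emph{if and only if} $\Gamma$ and $\Lambda$ are measure equivalent. Hence $\Gamma\mapsto\Gamma\times\circle$ descends to an injection from the set of measure equivalence classes of countable groups into the set of orbit equivalence classes of non-discrete locally compact Polish groups. It therefore suffices to exhibit uncountably many countable groups that are pairwise \emph{not} measure equivalent.

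To produce such a family I would use Gaboriau's theorem that $\ell^2$-Betti numbers are multiplied by a common positive constant under measure equivalence: if $\Gamma$ and $\Lambda$ are measure equivalent with coupling constant $c$, then $\beta_k^{(2)}(\Gamma)=c\,\beta_k^{(2)}(\Lambda)$ for every $k$. Consequently the support $\{k\geq 2:\beta_k^{(2)}(\Gamma)\neq 0\}$, together with the projective class of the sequence $(\beta_k^{(2)}(\Gamma))_{k\geq 2}$, is an invariant of measure equivalence.

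The concrete family I would use is as follows. For a subset $S\subseteq\{2,3,4,\dots\}$ set $\Gamma_S:=\ast_{n\in S}\,\mathbb{F}_2^{\times n}$, a countable group. By the K\"unneth formula the only nonzero $\ell^2$-Betti number of $\mathbb{F}_2^{\times n}$ is $\beta_n^{(2)}=1$, and by additivity of $\ell^2$-Betti numbers over free products in degrees $\geq 2$ one obtains $\beta_k^{(2)}(\Gamma_S)=1$ when $k\in S$ and $0$ otherwise, for every $k\geq 2$. If $\Gamma_S$ and $\Gamma_{S'}$ were measure equivalent with constant $c$, then $\beta_k^{(2)}(\Gamma_S)=c\,\beta_k^{(2)}(\Gamma_{S'})$ for all $k\geq 2$; since these values lie in $\{0,1\}$, choosing $k$ in a nonempty support forces $c=1$ and hence $S=S'$. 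Restricting to the uncountably many infinite subsets $S$ thus yields uncountably many pairwise non measure equivalent countable groups, and the corollary follows.

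The main obstacle is the measured-group-theoretic input, not the reduction: one needs both Gaboriau's proportionality theorem and the $\ell^2$-Betti computation for the family, the only delicate point there being the additivity of $\ell^2$-Betti numbers over a \emph{countably infinite} free product (the K\"unneth computation for $\mathbb{F}_2^{\times n}$ being routine). Everything else is the formal passage through Proposition \ref{prop:meoe}. Alternatively, one could simply invoke the known fact that there exist uncountably many countable groups up to measure equivalence and combine it directly with Proposition \ref{prop:meoe}.
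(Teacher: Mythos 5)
Your proposal is correct and follows the paper's own route: the paper likewise deduces the corollary from Proposition \ref{prop:meoe} together with the known fact that there are uncountably many countable groups up to measure equivalence, which it simply cites (this is exactly the alternative you mention at the end). The only difference is that you additionally sketch a proof of that cited fact via Gaboriau's proportionality theorem and the $\ell^2$-Betti numbers of the free products $\ast_{n\in S}\,\F_2^{\times n}$ --- essentially the standard argument underlying the reference --- so your extra work is sound but not a genuinely different approach.
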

\begin{proof}This follows from the fact that there are uncountably many countable groups up to measure equivalence (see the paragraph preceding $\mathbf P_{\mathrm{ME}}\mathbf{15^*}$ in \cite{MR2183295}).
\end{proof}

\section{Dense subgroups in orbit full groups}

The aim of this section is to prove the following theorem.

\begin{thm}\label{thm:densesub}
  Let $G$ be a locally compact Polish group. For every measure-preserving action of $G$ on the probability space $(X,\mu)$ and for every dense subgroup $H\subset G$, we have that  $[\mathcal R_H]$ is dense in $[\mathcal R_G]$. 
\end{thm}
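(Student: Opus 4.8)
The plan is to use the cocycle description of $[\mathcal R_G]$. Since $\overline{[\mathcal R_H]}$ is a closed subgroup of the Polish group $[\mathcal R_G]$, it is enough to produce a \emph{dense} family of elements all lying in $\overline{[\mathcal R_H]}$. It is worth pinning down immediately where the difficulty is. Recall that $\Phi\colon\widetilde{[\mathcal R_G]}\to[\mathcal R_G]$ is a continuous surjective homomorphism for convergence in measure, and that writing $\widetilde{[\mathcal R_H]}$ for the subgroup of $H$-valued cocycles, a measurable selection gives $\Phi(\widetilde{[\mathcal R_H]})=[\mathcal R_H]$. Now $H$-valued functions are trivially dense in $\LL^0(X,\mu,G)$, so the only genuine issue is the constraint of staying in $\widetilde{[\mathcal R_G]}$: one must approximate a cocycle $f$ by an $H$-valued one while keeping $x\mapsto f(x)x$ a measure-preserving \emph{bijection}.

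I would build the dense family from \emph{transvections}. For $g\in G$ and a Borel set $A$ with $gA\cap A=\emptyset$, let $t_{g,A}$ be the involution of $[\mathcal R_G]$ equal to $g\cdot$ on $A$, to $g\inv\cdot$ on $gA$, and to the identity elsewhere. Two facts about them are easy. \emph{First}, every \emph{piecewise translation} --- an element $T$ admitting a finite Borel partition $\{A_i\}$ with $T|_{A_i}=g_i\cdot$ and $\{g_iA_i\}$ again a partition --- is a finite product of transvections: after subdividing the $A_i$ one may assume each piece is either fixed or moved off itself, and then $T$ is assembled from swaps of the above form. \emph{Second}, each transvection lies in $\overline{[\mathcal R_H]}$: choosing $h_n\in H$ with $h_n\to g$, continuity of the near-action gives $\mu(h_nA\bigtriangleup gA)\to0$, whence $\mu(A\cap h_nA)\to\mu(A\cap gA)=0$ and $\mu(A\cap h_n\inv A)\to 0$; deleting from $A$ the set $A\cap(h_nA\cup h_n\inv A)$ yields $A_n\subseteq A$ with $\mu(A\setminus A_n)\to 0$ and $A_n\cap h_nA_n=\emptyset$, so $t_{h_n,A_n}\in[\mathcal R_H]$, and its cocycle converges to that of $t_{g,A}$ in $\widetilde{[\mathcal R_G]}$ (the two translate by $h_n$ resp.\ $g$ on $A_n$ with $d_G(h_n,g)\to0$, and differ elsewhere only on a set whose measure tends to $0$); applying the continuous map $\Phi$ gives $t_{h_n,A_n}\to t_{g,A}$. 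Combining these two facts with joint continuity of multiplication, every piecewise translation belongs to the closed subgroup $\overline{[\mathcal R_H]}$. Note that this entire part uses nothing about $G$ beyond continuity of the near-action, and so holds for any Polish group.

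It remains --- and this is the main obstacle --- to prove that the piecewise translations are dense in $[\mathcal R_G]$. The naive attempt, approximating $f$ in measure by a simple cocycle $\sum_i g_i\mathbf 1_{A_i}$, fails precisely because the translated pieces $g_iA_i$ need not retile $X$, so bijectivity is destroyed; repairing this tiling defect is the crux, and it is exactly here that local compactness is indispensable --- indeed the theorem is false for $\mathfrak S_\infty$ by Kolmogorov's example. Here I would feed in a cross-section $Y\subseteq X$ with identity neighbourhood $U$ (Definition \ref{df:crosssection}): injectivity of $(u,y)\mapsto uy$ on $U\times Y$ realises a positive-measure ``box'' of $X$ on which the $G$-action is a genuine translation, and covering $X$ modulo null sets by countably many $G$-translates of such boxes supplies honest $G$-translations --- organised by the countable Feldman--Moore relation carried by $Y$ --- from which piecewise translations approximating a given $T$ can be assembled with control on the tiling; on the non-free part, which is Borel by Lemma \ref{lem:free part is Borel}, one treats the stabiliser directions separately. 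This density statement is exactly the content abstracted by Becker's notion of a \emph{suitable} action, every action of a locally compact Polish group being suitable, and carrying it out rigorously is the technical heart of the proof. Granting it, $\overline{[\mathcal R_H]}$ contains the dense set of piecewise translations, hence equals $[\mathcal R_G]$, which is the claim.
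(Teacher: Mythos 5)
Your Claim 2 (each transvection $t_{g,A}$ lies in $\overline{[\mathcal R_H]}$) is correct, and your final reduction is the right spirit, but your Claim 1 --- that every piecewise translation is a finite product of transvections --- is false, and this breaks the argument. Take $G=\R$ acting freely by a measure-preserving flow (e.g.\ an irrational linear flow on the $2$-torus) and let $T_1$ be the time-one map; it is a piecewise translation with a single piece. Since the action is free, every piecewise translation $T$ has a well-defined finite-image cocycle $c_T\colon X\to\R$ with $T(x)=c_T(x)\cdot x$, and the ``flux'' $\rho(T):=\int_X c_T\,d\mu$ is a group homomorphism into $(\R,+)$, because $c_{ST}(x)=c_S(T(x))+c_T(x)$ and $T$ preserves $\mu$. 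Every transvection satisfies $\rho(t_{g,A})=g\mu(A)-g\mu(gA)=0$, so every finite product of transvections has flux $0$; but $\rho(T_1)=1$. Concretely, the step ``$T$ is assembled from swaps'' is where this fails: after subdividing so that each piece is moved off itself, the swap $t_{g_i,A_i}$ sends $g_iA_i$ back to $A_i$, whereas $T$ pushes $g_iA_i$ forward, and the flux obstruction shows no finite amount of further cutting can repair this. (There is no contradiction with the theorem itself: $\rho$ is not continuous for convergence in measure, so the group generated by transvections can still be dense.) The repair is to abandon transvections and work with arbitrary elements whose cocycle has finite (or countable) image: your Claim 2 approximation --- replace the finitely many values $g_i$ by nearby $h_i\in H$, shrink the domains to $B_i=h_i\inv\left(g_iA_i\cap h_iA_i\right)$ to restore injectivity, then extend the resulting injective partial map to a genuine element --- works verbatim for these, the extension step being exactly Lemma \ref{lem: ext countable range}; this is precisely how the paper deduces Theorem \ref{thm:densesubpol}.

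The second, independent gap is that you explicitly defer what you yourself call the main obstacle: the density in $[\mathcal R_G]$ of the piecewise (equivalently, countable-image) translations. This is not a routine consequence of cross-sections; it is the technical heart of the paper, namely Theorem \ref{thm:fmlsui}, proved via $N$-good couples whose existence uses suitability (Proposition \ref{prop:ess}), a measurable Hall-marriage augmentation argument (Proposition \ref{prop:bitbig}), and Zorn's lemma (Proposition \ref{prop:zornification}), combined with Becker's theorem that every measure-preserving action of a locally compact Polish group is suitable. Your cross-section sketch is also off track on its own terms: Definition \ref{df:crosssection} requires the action to be essentially free, while Theorem \ref{thm:densesub} concerns arbitrary measure-preserving actions, and the paper's proof of this theorem uses no cross-sections at all (they enter only later, for the topological rank). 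So as written, the proposal rests on a false intermediate claim and omits the argument that constitutes the actual proof.
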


Theorem \ref{thm:densesub} will be a crucial tool to compute the topological rank\footnote{Recall that the topological rank of a topological group is the minimum of the rank of a countable dense subgroup.} of an orbit full group (see next section).

\subsection{Suitable actions}

We will prove Theorem \ref{thm:densesub} under a weaker hypothesis in the context of Polish group actions. Recall however, that Kolmogorov's example (\cite[Example 9]{MR1760945}) provide a counterexample of Theorem \ref{thm:densesub}. Indeed there is a Borel probability measure on $\{0,1\}^\N$ such that the full group generated by the finitely supported permutations is not dense in the orbit full group of the Polish group of all permutations of $\N$ acting by shift on $\{0,1\}^\N$. The fact is that this action is not \textit{suitable}.

 \begin{df}[Becker, {\cite[Definition 1.2.7]{MR2995370}}]\label{df:suitable}
Let $G$ be a Polish group. A Borel, measure-preserving action of $G$ on the probability space $(X,\mu)$ is \textbf{suitable} if for all Borel subsets $A,B\subset X$ of positive measure, one of the following two conditions holds:
\begin{enumerate}[(1)]
\item for any open neighborhood of the identity $O\subset G$, there is $g\in O$ such that $\mu(A\cap g B)>0$;
\item there are Borel subsets $A'\subset A$ and $B'\subset B$ of full measure in $A$ and $B$ and an open neighborhood $O$ of the identity in $G$ such that $(OA')\cap B'=\emptyset$. 
\end{enumerate}
\end{df}

We will prove the following. 

\begin{thm}\label{thm:densesubpol}
  Let $G$ be a Polish group. For every Borel, measure-preserving, suitable action of $G$ on the probability space $(X,\mu)$ and for every dense subgroup $H\subset G$, the orbit full group $[\mathcal R_H]$ is dense in $[\mathcal R_G]$. 
\end{thm}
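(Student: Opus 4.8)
The plan is to work through the lift $\Phi\colon\widetilde{[\RR_G]}\to[\RR_G]$, writing $T=\Phi(f)$ for a measurable $f\colon X\to G$ and recalling that, by the definition of convergence in measure, it suffices to produce for every $\eps>0$ some $S=\Phi(h)\in[\RR_H]$ with $h\colon X\to H$ and $\tilde d_G(f,h)=\int_X d_G(f,h)\,d\mu<\eps$. The essential observation is that I do not need $S(x)=T(x)$ pointwise: I only need $S$ to act, on a set of nearly full measure, by group elements $h(x)\in H$ lying within $\eps$ of $f(x)$. Since $H$ is dense and (by the continuity of the near-action $\rho_\alpha$ established above) $g\mapsto\rho_\alpha(g)$ is continuous, replacing $f$ by a nearby $H$-valued simple function is harmless in the metric $\tilde d_G$; the whole difficulty is that the resulting piecewise map need not be a measure-preserving bijection, hence need not lie in $[\RR_H]$.

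To overcome this I would build $S$ as a genuine element of $[\RR_H]$ by a measurable exhaustion. Consider families of pairs $(C_k,h_k)$ with the $C_k\subseteq X$ pairwise disjoint, $h_k\in H$, $d_G(h_k,f(x))<\eps$ for $x\in C_k$, and the images $h_kC_k$ pairwise disjoint. Any such family defines a partial $\RR_H$-isomorphism, and a family whose domain $\bigsqcup_k C_k$ and range $\bigsqcup_k h_kC_k$ are both $X$ up to measure zero assembles into $S\in\Aut(X,\mu)$ with $S(x)\in H\cdot x$ for a.e. $x$, i.e. $S\in[\RR_H]$, and with $\tilde d_G(f,h)\leq\eps$. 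Taking (by a standard exhaustion) a family maximising the consumed measure, the construction reduces to the following no-stuck lemma: if the leftover domain $D$ and leftover range $R$ both have positive measure, then there exist a positive-measure $C\subseteq D$ and $h\in H$ with $d_G(h,f)<\eps$ on $C$ and $hC\subseteq R$. To keep the construction honest I would maintain throughout the invariant that the consumed range stays close to $T$ applied to the consumed domain, so that $R$ is close to $T(D)$; this is arranged by adding each piece with $h_k$ so close to $f$ on $C_k$ that $\mu(h_kC_k\,\bigtriangleup\,T(C_k))$ is negligible, the total error being controlled by running the exhaustion in rounds.

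The no-stuck lemma is where suitability enters and is the heart of the argument. First reduce $f$ on a piece $A_0\subseteq D$ of positive measure to be within $\eps/2$ of a single $g\in H$, so that $gA_0$ is close to $T(A_0)\subseteq T(D)$; by the invariant $T(D)$ is close to $R$, so the sets $gA_0$ and $R$ should genuinely overlap. I would make this rigorous by applying Becker's dichotomy (Definition \ref{df:suitable}) to the pair $gA_0,\,R$: in alternative (1) one obtains arbitrarily small $\gamma\in G$ with $\mu(R\cap\gamma gA_0)>0$, which, after perturbing $\gamma$ into $H$ using density of $H$ and continuity of the action and noting that $d_G(\gamma g,g)$ stays small, yields the pair $C=\{x\in A_0:\gamma gx\in R\}$ and $h=\gamma g\in H$ demanded by the lemma; alternative (2) would locally separate $gA_0$ from $R$ by a neighborhood of the identity, and I would rule it out precisely because $gA_0\approx T(A_0)\subseteq T(D)\approx R$ forces a near-identity overlap, contradicting separation.

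The main obstacle is exactly this exclusion of the separated alternative, together with the measure bookkeeping that keeps $R$ close to $T(D)$ throughout the exhaustion, in particular through the delicate endgame as $\mu(D)\to0$. Suitability is indispensable here: it is the hypothesis guaranteeing that near-identity motions displace sets by a small measure, so that the piecewise approximant stays close to a bijection and no separated remainder can survive. That this cannot be dispensed with is shown by Kolmogorov's non-suitable example recalled above, where a positive-measure remainder stays permanently separated and $[\RR_H]$ fails to be dense in $[\RR_G]$.
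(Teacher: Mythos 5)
Your overall architecture (a maximal-family exhaustion whose augmentation step is powered by suitability) points in the right direction, but the two load-bearing steps of your argument are precisely where the real difficulty sits, and both fail as stated. The first and most serious gap is the claim that pointwise closeness of a group element to $f$ on a piece controls the measure of the image mismatch: you assert that one can add pieces $(C_k,h_k)$ with $\mu\bigl(h_kC_k\bigtriangleup T(C_k)\bigr)$ negligible whenever $h_k$ is close to $f$ on $C_k$, and likewise that ``$f$ within $\eps/2$ of $g$ on $A_0$'' gives ``$gA_0$ close to $T(A_0)$''. This is false. Weak continuity of the near-action controls $\mu(hB\bigtriangleup gB)$ only for a \emph{fixed} Borel set $B$ as $h\to g$; it gives no bound on $\mu\bigl(\Phi(f)(C)\bigtriangleup gC\bigr)$ in terms of $\sup_{x\in C}d_G(f(x),g)$, because here the set $C$ is dictated by $f$ and cannot be fixed before the group element is chosen. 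For a concrete counterexample, let $G=\mathbb S^1$ act on $X=\mathbb S^1\times[0,1]$ by rotating the first coordinate and let $f(x,s)$ be the rotation by $\delta s$ (a shear, which lies in $\widetilde{[\RR_G]}$); every value of $f$ is within $\delta$ of the identity, yet for $A_0=B\times[0,1]$ with $B$ a sufficiently fine union of equally spaced arcs, \emph{no} rotation $g$ makes $\mu\bigl(gA_0\cap\Phi(f)(A_0)\bigr)$ comparable to $\mu(A_0)$. Consequently your invariant ($R$ close to $T(D)$) cannot be maintained, and your exclusion of alternative (2) of Definition \ref{df:suitable} — which in addition would need $\mu(A_0)$ to dominate the accumulated bookkeeping errors, although $A_0$ is merely some positive-measure piece — collapses, taking the ``no-stuck lemma'' and the whole exhaustion with it.

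The second gap is structural: augmenting by adding a \emph{disjoint} new piece with a prescribed target range $R$ is provably insufficient. The paper exhibits (figure before Lemma \ref{lem:step 1}) a good couple with $\Phi(f)(X\setminus A)\subseteq\Phi(g)(A)$, i.e. the $f$-image of the leftover domain lies entirely inside the consumed range; in that configuration the pair $(gA_0,R)$ can genuinely satisfy the separation alternative, most transparently when $G$ is discrete and $\eps$ is small, where your lemma would force $\mu\bigl(\Phi(f)(D)\cap R\bigr)>0$, which fails there. This is why the paper's use of suitability looks different from yours: Proposition \ref{prop:ess} never prescribes the range in advance, but compares $\Phi(f)(A_O)$ with translates $O'f(x_0)A_O$ of the piece \emph{itself}, the overlap being witnessed by the explicit positive-measure set $\Phi(f)(A'\cap C_{O'})$ coming from a support point of $f_*\mu|_B$ — no measure-closeness estimate anywhere. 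The resulting domain/range mismatch is then repaired by a Hall's-marriage augmenting chain (Lemma \ref{lem:step 1} and Proposition \ref{prop:bitbig}) which \emph{modifies previously assigned pieces}, the amount of modification being controlled by the order $\prec$ with its $1/\eps$ factor so that Zorn's lemma still applies; your scheme has no counterpart to this rearrangement. Finally, note how the paper sidesteps your quantifier problem entirely by splitting the proof: Theorem \ref{thm:fmlsui} first approximates $f$ by countable-image functions (the hard part, where suitability is spent), and only then, in the deduction of Theorem \ref{thm:densesubpol}, are finitely many group elements replaced by elements of $H$ — at that stage the sets $A_i$ are fixed first, so weak continuity legitimately yields $\mu(g_iA_i\bigtriangleup h_iA_i)$ small.
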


Becker proved in Theorem 1.2.9 of \cite{MR2995370}, that all measure-preserving actions of locally compact Polish groups are suitable, so Theorem \ref{thm:densesubpol} implies Theorem \ref{thm:densesub}. We end this section by giving a different example to which our results apply.

\begin{exemple}\label{example: suitable bernoulli}The standard Bernoulli shift $\mathfrak S_\infty\act([0,1]^\N,\lambda^{\otimes\N})$ is a suitable action, where $\lambda$ is the Lebesgue measure. 
\end{exemple}
\begin{proof} Set $X:=[0,1]^\N$ and $\mu:=\lambda^{\otimes\N}$ and let $A,B\subseteq X$ be Borel subsets of positive measure. Suppose that (1) does not hold, and let $O$ be a neighborhood of the identity such that
\[\text{for all }g\in O,\ \mu(A\cap g B)=0.\]
By shrinking $O$ if necessary, we may assume that there exists $N\in\N$ such that $O$ is the subgroup of $\mathfrak S_\infty$ consisting of all the permutations which fix pointwise the set $\{0,...,N-1\}$. We endow $Y:=[0,1]^{\{0,...,N-1\}}$ with the measure $\eta:=\lambda^{\otimes N}$ and $Z:=[0,1]^{\N\setminus\{0,...,N-1\}}$ with the measure $\nu:=\lambda^{\N\setminus\{0,...,N-1\}}$. Observe that $(X,\mu)=(Y,\eta)\times(Z,\nu)$.

Let $\tau\in O$ be a bijection whose set of fixed points is  $\{0,...,N-1\}$, and which has only one non-trivial orbit. Then its Bernoulli shift on $(Z,\nu)$ is ergodic since it is conjugate to the $\Z$-shift on $([0,1]^\Z,\lambda^{\otimes\Z})$. Moreover the ergodic decomposition of its Bernoulli shift on $(X,\mu)$ is given by $(\mu_y)_{y\in Y}$ where $\mu_y$ is the probability measure on $X$ defined by $\mu_y:=\delta_y\otimes \nu$. 

Since $\mu(A\cap \tau^k B)=0$ for all $k\in\Z$ and every $\mu_y$ is $\tau$-ergodic, there is a full measure Borel subset $Y'$ of $Y$ such that for all $y\in Y'$ we have that $\mu_y(A)=0$ whenever $\mu_y(B)>0$. Let $Y_0$ be the Borel set of $y\in Y'$ such that $\mu_y(B)>0$, and put $Y_1:=Y'\setminus Y_0$. The sets $A':=(Y_1\times [0,1]^\N)\cap A$ and $B':=(Y_0\times [0,1]^\N)\cap B$ witness that (2) holds. 
\end{proof}

\begin{rmq}
In the above example, since the countable group $\mathfrak S_{(\infty)}$ of finitely supported permutations is amenable, the full group it generates is extremely amenable for the uniform topology by \cite[Thm. 5.7]{MR2311665}. Since the uniform topology refines the topology of convergence in measure and $\mathfrak S_{(\infty)}$ is dense in $\mathfrak S_\infty$, Theorem \ref{thm:densesubpol} yields that $[\mathcal R_{\mathfrak S_\infty}]$ is extremely amenable. It would be interesting to understand for which non locally compact closed subgroups $G\leq \mathfrak S_\infty$ the orbit full group associated to the standard Bernoulli shift is extremely amenable. 
\end{rmq}

\subsection{An equivalent statement}

From now on, we will use the notations of Section \ref{sectfull}. For every Borel, measure-preserving action of $G$ on the probability space $(X,\mu)$, we denote by $\widetilde{[G]}_D\subset \widetilde{[\mathcal R_G]}$ the subset of function with countable (essential) image and we put $[G]_D:=\Phi(\widetilde{[G]}_D)$. Note that $[G]_D$ is the smallest full group containing the image of $G$ inside $\Aut(X,\mu)$. 

Theorem \ref{thm:densesubpol} follows form the following weaker theorem, which is also important in its own right. 

\begin{thm}\label{thm:fmlsui}
  Let $G$ be a Polish group. For every Borel, measure-preserving, suitable action of $G$ on the probability space $(X,\mu)$, we have that $\widetilde{[G]}_D\subset \widetilde{[\mathcal R_G]}$ is a dense subgroup. 
\end{thm}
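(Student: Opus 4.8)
The plan is to fix $f\in\widetilde{[\mathcal R_G]}$ and $\eps>0$ and to produce $g\in\widetilde{[G]}_D$ with $\tilde d_G(f,g)\le\eps$. Since $\tilde d_G$ is right-invariant (by right-invariance of $d_G$ together with the fact that each $\Phi(h)$ is measure-preserving), the closure of $\widetilde{[G]}_D$ is a subgroup, so this suffices. Using that $G$ is separable, I would fix a Borel partition $(U_n)$ of $G$ into sets of $d_G$-diameter $<\eps/2$, with representatives $g_n\in U_n$, and set $B_n:=f^{-1}(U_n)$, a Borel partition of $X$. Any $g$ that on each $B_n$ only takes values $g_m$ with $d_G(g_m,f(x))<\eps$ then automatically has countable image and satisfies $\tilde d_G(f,g)\le\eps$; the entire content is to arrange, on top of this, that $\Phi(g)$ be a genuine measure-preserving bijection, that is, that $g$ indeed lie in $\widetilde{[\mathcal R_G]}$.

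I would build such a $g$ by exhaustion, assembling a measure-preserving bijection piece by piece. Call a \emph{partial admissible matching} a Borel set $D$ together with a countable partition $D=\bigsqcup_kC_k$ and elements $h_k\in G$ with $d_G(h_k,f(x))<\eps$ for a.e. $x\in C_k$, such that the sets $h_kC_k$ are pairwise disjoint; then $x\mapsto h_kx$ on $C_k$ defines a measure-preserving injection of $D$ onto $R:=\bigsqcup_kh_kC_k$. Let $s$ be the supremum of $\mu(D)$ over all partial admissible matchings, and realize it by disjointifying a sequence whose domains approach the supremum. The goal is to show $s=1$, for then the associated $g$ lies in $\widetilde{[G]}_D$ and is $\eps$-close to $f$.

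Suppose $s<1$ and set $A:=X\setminus D$, $B:=X\setminus R$, so $\mu(A)=\mu(B)>0$; write $A=\bigsqcup_nA_n$ with $A_n:=A\cap B_n$, and choose $n$ with $\mu(A_n)>0$. If $\mu(g_nA_n\cap B)>0$, then $C:=A_n\cap g_n^{-1}B$ and the element $g_n$ enlarge the matching, contradicting maximality; so I may assume $\mu(g_nA_n\cap B)=0$. I then apply the suitability dichotomy of Definition \ref{df:suitable} to the pair $(g_nA_n,B)$. In case (1), taking the neighborhood $O=\{h:d_G(h,e)<\eps/2\}$ produces $h\in O$ with $\mu(g_nA_n\cap hB)>0$, equivalently $\mu(h^{-1}g_nA_n\cap B)>0$; since $d_G(h^{-1}g_n,f(x))\le d_G(h^{-1},e)+d_G(g_n,f(x))<\eps$ on $A_n$, the element $h^{-1}g_n$ is admissible and again enlarges the matching, contradicting maximality. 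Thus case (2) must hold for every $n$ with $\mu(A_n)>0$: there is a neighborhood $O_n$ of the identity and conull subsets $A_n''\subseteq g_nA_n$, $B_n''\subseteq B$ with $O_nA_n''\cap B_n''=\emptyset$.

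The main obstacle is to rule out this globally separated configuration, and this is exactly where suitability has to be played against the measure-theoretic rigidity of the genuine bijection $T:=\Phi(f)$. Note that $T$ carries each $A_n$ into $\{hx:x\in A_n,\ h\in B_{\eps/2}(g_n)\}=B_{\eps/2}(e)\,(g_nA_n)$, a neighborhood-thickening of $g_nA_n$. The difficulty is that case (2) only furnishes \emph{some} neighborhood $O_n$, which may be far smaller than the fixed scale $B_{\eps/2}(e)$ governing admissible moves, so one cannot directly conclude that these exact images $T(A_n)$ avoid $B$. Reconciling the scale $O_n$ of the separation with the fixed scale $\eps$ of the cover — equivalently, propagating the local separations into a genuine measure deficit that contradicts the global identity $\mu(T(A))=\mu(A)=\mu(B)=\mu(X\setminus R)$, which by itself only yields the inconclusive bound $\mu(T(A))\le\mu(R)$ — is the delicate heart of the matter. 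I expect to feed the dichotomy back in on the separated pieces: inside each piece the region reachable at scale $O_n$ is disjoint from $B$, which splits off a sub-problem on a strictly smaller invariant and allows a recursion whose base case is the favourable case (1). Suitability is indispensable here precisely because it is phrased purely in terms of $\mu$ and of translations in $G$, with no recourse to a metric on $X$, along which group-closeness would otherwise fail to control closeness of images. Once $s=1$ is established, the resulting $g$ lies in $\widetilde{[G]}_D$ with $\tilde d_G(f,g)\le\eps$, and letting $\eps\to0$ proves the theorem.
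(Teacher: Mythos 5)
Your argument stops exactly where the content of the theorem begins, and the two places where it breaks are structural, not cosmetic. First, the step ``realize the supremum $s$ by disjointifying a sequence whose domains approach the supremum'' does not work: two partial admissible matchings are in general incompatible (the image of one overlaps the image of the other), so a maximizing sequence cannot be merged into a single matching, and the supremum need not be attained. Second, and more fundamentally, your enlargement moves are always \emph{extensions} of the given matching, whereas the obstruction here is of Hall's-marriage type: a matching that cannot be extended can still have $\mu(D)<1$, because the unused range $B$ may be reachable only from points that are already matched (this is exactly the configuration drawn in the figure accompanying Lemma \ref{lem:step 1}). Consequently, when you apply the dichotomy of Definition \ref{df:suitable} to the pairs $(g_nA_n,B)$ you have no way to exclude case (2), and the ``recursion whose base case is case (1)'' that you invoke at the end is not an argument; it is precisely the missing proof. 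No amount of feeding the dichotomy back into pairs of this shape can succeed, because for such pairs case (2) can genuinely occur.

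The paper's proof resolves both problems at once by never applying suitability to a pair on which case (2) might hold, and by enlarging matchings through \emph{modification} rather than extension. In Proposition \ref{prop:ess}, given any positive-measure $B$, the dichotomy is applied to the pair $(\Phi(f)(A_O), f(x_0)A_O)$, where $f(x_0)$ lies in the support of $f_*\mu|_B$ and $A_O=B\cap f^{-1}(Of(x_0))$: case (2) is impossible for this pair, since both sets contain the positive-measure set $\Phi(f)(A'\cap C_{O'})$, so case (1) produces a good partial matching inside $B$ whose image lies inside $\Phi(f)(B)$ --- the target is the $f$-image of $B$, not the unused range. The unused range is then reached by an augmenting-path argument (Lemma \ref{lem:step 1}): alternating $\Phi(f)$ and $\Phi(g)^{-1}$ starting from $X\setminus A$ produces pairwise disjoint sets of equal measure, so after at most $1/\eps$ steps one escapes $\Phi(g)(A)$; Proposition \ref{prop:bitbig} then cuts and pastes the matching along this chain, changing $g$ on a set of measure at most $\frac{1}{\eps}(\mu(A')-\mu(A))$. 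Because enlargement requires changing $g$, Zorn's lemma is run with the order $\prec$ whose $1/\eps$-controlled modifications make chains have upper bounds (Proposition \ref{prop:zornification}); maximality then only forces $\mu(A)\geq 1-\eps$, not $1$, which suffices after extending arbitrarily by Lemma \ref{lem: ext countable range}. Note also that your goal $s=1$ (pointwise $\eps$-closeness almost everywhere) is stronger than what is needed for convergence in measure, and there is no reason it should be achievable.
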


Before deducing Theorem \ref{thm:densesubpol} from the above result, we need the following lemma which will be used several times. 

\begin{lem}\label{lem: ext countable range} Let $A\subseteq X$ and let $f:A\to G$ be a function with countable image. If the map $\Phi(f):A\to X$ defined by $\Phi(f)(x)=f(x)x$ is injective, then there exists $f'\in\widetilde{[G]}_D$ which extends $f$.
\end{lem}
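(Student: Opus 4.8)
The plan is to reduce to the classical theory of full groups of measure-preserving countable Borel equivalence relations. Since $f$ has countable image, write $\img f=\{g_n:n\in\N\}$ and set $A_n:=f^{-1}(g_n)$, so that $A=\bigsqcup_n A_n$ and $\Phi(f)$ acts as the translation $x\mapsto g_n x$ on each $A_n$. Because the action is measure-preserving, $\Phi(f)$ maps $A_n$ bijectively and measure-preservingly onto $g_n A_n$; injectivity of $\Phi(f)$ forces the sets $g_n A_n$ to be pairwise disjoint, so $B:=\Phi(f)(A)=\bigsqcup_n g_n A_n$ is Borel, $\Phi(f)\colon A\to B$ is a measure-preserving Borel bijection, and in particular $\mu(A)=\mu(B)$.

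Next I would introduce the countable subgroup $\Gamma_0:=\langle g_n:n\in\N\rangle\leq G$. The restriction of the $G$-action to $\Gamma_0$ yields a measure-preserving countable Borel equivalence relation $\mathcal R_{\Gamma_0}\subseteq\mathcal R_G$, and by construction the graph of $\Phi(f)$ is contained in $\mathcal R_{\Gamma_0}$, so $\Phi(f)$ is a partial isomorphism of this relation. The goal then becomes to extend $\Phi(f)$ to an element $S$ of the full group $[\mathcal R_{\Gamma_0}]$. Since every element of $[\mathcal R_{\Gamma_0}]$ is of the form $x\mapsto\gamma(x)x$ for a measurable $\gamma\colon X\to\Gamma_0$, one may then define $f'$ piecewise by $f':=f$ on $A$ and $f':=\gamma$ on $X\setminus A$; this $f'$ has countable image, satisfies $\Phi(f')=S\in[\mathcal R_{\Gamma_0}]\subseteq[\mathcal R_G]$ and $f'\bigr|_A=f$, hence $f'\in\widetilde{[G]}_D$ extends $f$, as required. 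Note that freeness is not needed here, precisely because $f'$ is defined directly rather than read off from $S$.

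The main point is therefore to complete the partial isomorphism $\Phi(f)\colon A\to B$ into a bijection of $X$ inside $\mathcal R_{\Gamma_0}$, which I would do by matching the complements $C:=X\setminus A$ and $D:=X\setminus B$, which have equal measure. By the classical equidecomposability criterion for measure-preserving countable Borel equivalence relations (two Borel sets are exchanged by a partial isomorphism built from countably many relation-moves exactly when they have the same measure on every invariant Borel set), it suffices to check that $\mu(C\cap W)=\mu(D\cap W)$, equivalently $\mu(A\cap W)=\mu(B\cap W)$, for every $\mathcal R_{\Gamma_0}$-invariant Borel set $W$. This holds because such a $W$ is preserved by $\Phi(f)$, which moves points within their orbits, so $\Phi(f)$ restricts to a measure-preserving bijection of $A\cap W$ onto $B\cap W$. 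The criterion then provides countable partitions of $C$ and $D$ together with elements of $\Gamma_0$ realizing a partial isomorphism $\psi\colon C\to D$ with graph in $\mathcal R_{\Gamma_0}$ and countable image, and $S:=\Phi(f)\cup\psi$ is the desired full element. The one step requiring care, and the place where invariance of the measure is essential, is this equidecomposability of the complements on each invariant piece; everything else is bookkeeping.
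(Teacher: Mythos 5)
Your proof is correct and follows essentially the same route as the paper: pass to the countable group $\Gamma_0$ generated by the image of $f$, view $\Phi(f)$ as a partial isomorphism (pseudo-full group element) of $\mathcal R_{\Gamma_0}$, and extend it to a full element by equidecomposing the complements, which works because they agree on every $\mathcal R_{\Gamma_0}$-invariant set --- the paper phrases this same criterion in terms of equality of $\mathcal R_{\Gamma_0}$-conditional measures, citing \cite{lm14nonerg}. Your closing remark about reading off the cocycle directly (so that freeness is not needed) matches the paper's ``lifting $T$ to $\widetilde{[G]}_D$ where $f$ was not defined.''
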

\begin{proof}
Let $\Gamma$ be the group generated by the range of the function $f$.
Since $\Phi(f)$ is injective,  the map $\Phi(f)$ is an element of the pseudo-full group\footnote{The pseudo-full group of the countable group $\Gamma$ acting on $(X,\mu)$ is defined to be the set of Borel partial maps whose graph is a subset of $\mathcal R_\Gamma$.} of $\Gamma$. The elements of the pseudo-full group of $\Gamma$ preserve the $\mathcal R_\Gamma$-conditional measure and any two sets having the same $\mathcal R_\Gamma$-conditional measure can be sent to one another by an elements of the pseudo full group of $\Gamma$(see \cite[Sec. 2.1]{lm14nonerg} for details). Therefore there is an element $T\in[G]_D$ which extends $\Phi(f)$. By lifting such a $T$ to $\widetilde{[G]}_D$ where $f$ was not defined, we obtain $f'\in\widetilde{[G]}_D$ which extends $f$.
\end{proof}

\begin{proof}[Proof of Theorem \ref{thm:densesubpol}]
  Let $G$ be a Polish group and let $H$ be a dense subgroup. Consider a Borel, measure-preserving suitable action of $G$ on the probability space $(X,\mu)$. By Theorem \ref{thm:fmlsui}, we only need to prove that $\widetilde{[H]}_D\subset \widetilde{[G]}_D$ is dense. 

Fix a compatible, right-invariant metric $d_G$ on $G$ bounded by $1$, fix $\eps>0$ and take $f\in \widetilde{[\mathcal R_G]}_D$. There are $k\in\N$, a finite subset $\{g_1,\ldots,g_k\}\subset G$ and a finite partition $\{A_0,\ldots,A_k\}$ of $X$ such that $\mu(A_0)\leq \eps/2$ and for every $i\geq 1$, we have $f(A_i)=\{g_i\}$. By density and weak-continuity of the action, there exists $\{h_1,\ldots,h_k\}\subset H$ such that for every $i\in \{1,\ldots,k\}$, we have that $d_G(g_i,h_i)\leq \eps$ and $\mu(g_i(A_i)\Delta h_i(A_i))\leq \eps/2k$. Put \[B_i:=h_i^{-1}( g_i(A_i)\cap h_i(A_i))\subset A_i,\quad B:=\cup_{i=1}^k B_i\] and observe that $\mu(B)\geq 1-\eps$. 
Consider the map  $f':B\to H$ defined by $f'(x)=h_i$ whenever $x\in B_i$.
Then $f$ has finite range, and since the subsets $h_iB_i$ are disjoint, $\Phi(f)$ is injective. Lemma \ref{lem: ext countable range} allows us to extend $f'$ to $f''\in \widetilde{[G]}_D$. 
We clearly have $\tilde d_G(f,f'')\leq 2\eps$, which ends the proof. 
\end{proof}

\subsection{Proof of Theorem \ref{thm:fmlsui}}

\begin{df}\label{df:good}
  Fix $f\in \widetilde{[\mathcal R_G]}$ and a neighborhood of the identity $N\subset G$. We say that a couple $(A,g)$ is ($N$-)\textbf{good} if 
\begin{enumerate}
\item $A\subset X$ is a measurable subset of positive measure and $g:A\to G$ is a measurable function with countable image,
\item for every $x\in A$, we have $f(x)g(x)\inv\in N$,
\item the map $\Phi(g):A\to X$ defined by $\Phi(g)(x)=g(x)x$ is injective. 
\end{enumerate}
\end{df}

We note that for a fixed $f\in\widetilde{[\mathcal R_G]}$ the existence of a good couple is not a trivial fact. Indeed, we will use the hypothesis that the action is suitable only to show the existence of such couples.

The proof of the theorem will be a measurable version of the Hall's marriage theorem and it will follow the same strategy as Hudson's in \cite{MR1207735}. For a fixed $f$ as in Definition \ref{df:good}, using Zorn's lemma, we will construct for every $\eps>0$ and neighborhood of the identity $N\subset G$ a good couple such that $\mu(A)>1-\eps$ in three steps.

\subsubsection*{Step 1}

\noindent In the first step (and only in this one), we will use the hypothesis that the action is suitable. 
\begin{prop}\label{prop:ess}
  Let $f\in\widetilde{[\mathcal R_G]}$ and let $N\subset G$ be a neighborhood of the identity. For every $B\subset X$ of positive measure, there is a good couple $(A,g)$ such that $A\subset B$ has positive measure and $\Phi(g)(A)\subset \Phi(f)(B)$. 
\end{prop}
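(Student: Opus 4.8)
The plan is to reduce the existence of a good couple to the existence of a \emph{single} group element that works on a set of positive measure. Write $C:=\Phi(f)(B)$ and, for $\gamma\in G$, set
\[E_\gamma:=\{x\in B:\ f(x)\gamma\inv\in N\ \text{and}\ \gamma x\in C\}.\]
If $\mu(E_\gamma)>0$ for some $\gamma$, then $(E_\gamma,\gamma)$, with $\gamma$ read as the constant function, is the desired good couple: it has the one-point image $\{\gamma\}$, condition (2) of Definition \ref{df:good} holds by construction, $\Phi(\gamma)=\gamma\cdot$ is injective, and $\Phi(\gamma)(E_\gamma)=\gamma E_\gamma\subseteq C=\Phi(f)(B)$. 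Conversely, any good couple $(A,g)$ with $\Phi(g)(A)\subseteq C$ restricts, on the positive-measure set where $g$ takes one of its countably many values $\gamma$, to exactly this constant situation. So the entire content of Proposition \ref{prop:ess} is to produce one $\gamma$ with $\mu(E_\gamma)>0$, and I may assume $g$ constant throughout.

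First I would localise $f$. Since $f$ is measurable and $G$ is separable, $G$ is covered by countably many right-translates of a small symmetric neighbourhood $O'$ of the identity, so $B$ splits into countably many Borel pieces on each of which $f$ takes values in a single translate $g_0O'$. At least one such piece $B_0\subseteq B$ has positive measure; since $\Phi(f)(B_0)\subseteq C$ it suffices to find $\gamma$ with $\mu(\{x\in B_0:\ f(x)\gamma\inv\in N,\ \gamma x\in C\})>0$. Taking $O'$ small enough (using continuity of conjugation by the fixed element $g_0$), I can arrange that $f(x)\gamma\inv\in N$ for every $x\in B_0$ as soon as $\gamma$ is sufficiently close to $g_0$; this disposes of condition (2) once and for all.

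The key step is to apply suitability (Definition \ref{df:suitable}) to the pair of positive-measure sets $(g_0B_0,\ C)$. If alternative (1) holds, then for every neighbourhood $O$ of the identity there is $s\in O$ with $\mu(g_0B_0\cap sC)>0$, equivalently $\mu((s\inv g_0)B_0\cap C)>0$ since $s$ is measure preserving. Choosing $O$ small enough that $\gamma:=s\inv g_0$ is as close to $g_0$ as the previous paragraph requires, I obtain $\mu(E_\gamma)>0$, and the proof is finished. Note that in this alternative I am free to shrink $O$, which is what makes the recurrent case easy.

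The main obstacle is to rule out alternative (2), the \emph{wandering} case, where there are conull $A'\subseteq g_0B_0$ and $C'\subseteq C$ and a neighbourhood $O$ with $OA'\cap C'=\emptyset$. Here I would exploit that $f$ is nearly constant on $B_0$: for $x\in B_0$,
\[\Phi(f)(x)=f(x)x=\big(g_0(g_0\inv f(x))g_0\inv\big)\,g_0x\in (g_0O'g_0\inv)\,g_0B_0,\]
so $\Phi(f)(B_0)$ lies in the small neighbourhood $(g_0O'g_0\inv)\,g_0B_0$ of $g_0B_0$. Provided the localisation scale satisfies $g_0O'g_0\inv\subseteq O$, a positive-measure set of points $y\in C'\cap\Phi(f)(B_0)$ can be written $y\in O(g_0x)$ with $g_0x\in A'$ (as $A'$ is conull in $g_0B_0$), whence $y\in OA'\cap C'$, contradicting $OA'\cap C'=\emptyset$; thus only alternative (1) can occur. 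The delicate point, which I expect to be the technical heart of Step 1, is that the gap neighbourhood $O$ is produced only \emph{after} the centre $g_0$ and the scale $O'$ are fixed, so the inclusion $g_0O'g_0\inv\subseteq O$ cannot be secured in advance. I would resolve this by refining the localisation: having obtained $O$, I either already get the contradiction above, or the values of $f$ are forced to avoid a fixed neighbourhood of $g_0$, allowing me to pass to a smaller scale and a new centre inside $g_0O'$ and iterate. Controlling this iteration is precisely where the absence of local compactness (hence of a uniform conjugation estimate over $O'$) must be handled with care.
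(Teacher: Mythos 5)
Your skeleton (reduce to a constant good couple, localise $f$, run the suitability dichotomy) parallels the paper's proof, but the step you yourself call the technical heart --- ruling out alternative (2) of Definition \ref{df:suitable} --- is a genuine gap, and the iteration you sketch does not close it. Each re-localisation to a smaller scale and a new centre changes the pair of sets to which suitability is applied, so condition (2) may then produce a new, still smaller gap neighbourhood for the new pair, and nothing forces this regress to terminate; making it terminate would essentially require extracting a convergent sequence of centres, i.e.\ rebuilding by hand exactly the object you are missing. The paper's resolution is one idea that collapses the circularity: take the localisation \emph{centre} to be a point $f(x_0)$ in the support of the pushforward measure $f_*\mu|_B$ (nonempty since $G$ is second countable). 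Then $C_{O'}:=B\cap f^{-1}(O'f(x_0))$ has positive measure for \emph{every} neighbourhood $O'$ of the identity, so one can fix the pair once and for all --- namely $\Phi(f)(A_O)$ and $f(x_0)A_O$, where $A_O:=B\cap f^{-1}(Of(x_0))$, $O=O^{-1}$, $O^2\subset N$ --- and only \emph{after} condition (2) reveals its gap neighbourhood $O'$ choose the matching scale: pulling the two full-measure sets back through the measure-preserving maps $\Phi(f)$ and $x\mapsto f(x_0)x$ gives a full-measure $A'\subseteq A_O$ with $\Phi(f)(A')$ and $O'f(x_0)A'$ disjoint, yet $\Phi(f)(A'\cap C_{O'})$ is a positive-measure subset of both, a contradiction. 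The support point provides positive mass at all scales around a fixed centre; this is precisely the flexibility that a cover by translates of a pre-chosen $O'$ lacks.

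A secondary remark: the conjugation difficulty you highlight (needing $g_0O'g_0^{-1}\subseteq O$) is an artifact of localising with left translates $g_0O'$. Localise instead with sets of the form $O'f(x_0)$, as the paper does, and take the candidate constant $\gamma=hf(x_0)$ with $h\in O$: then $f(x)\gamma^{-1}=f(x)f(x_0)^{-1}h^{-1}\in O\cdot O\subset N$ with no conjugation anywhere, and the containment $\Phi(f)(A'\cap C_{O'})\subset O'f(x_0)A'$ needed above is immediate since $f(x)\in O'f(x_0)$ for $x\in C_{O'}$. So with two changes --- support point as centre, and these translates as balls --- your outline becomes the paper's proof; without the support point, the argument is incomplete.
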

\begin{proof}
Consider a neighborhood of the identity $O\subset G$ such that $O=O^{-1}$ and $O^2\subset N$. Let $f(x_0)$ be an element of the support of the pushforward measure $f_*\mu\bigr|_B$ and put $A_O:=B\cap f^{-1}(Of(x_0))$.  For every neighborhood of the identity $O'$ in $G$, set $C_{O'}:=B\cap f\inv(O'f(x_0))$. Note that $C_{O'}\subset A_O$, whenever $O'\subset O$. By definition of the support of $f_*\mu\bigr|_B$ the Borel set $C_{O'}$ has positive measure. 

 Let us show that condition $(2)$ of Definition \ref{df:suitable} is not satisfied for the two Borel sets $\Phi(f)(A_0)$ and $f(x_0)A_0$. Indeed, $\Phi(f)\inv$ and $f(x_0)\inv$ are measure-preserving so if condition $(2)$ holds, then there is a full measure subset $A'\subseteq A_O$ such that $\Phi(f)(A')$ and $O'f(x_0)A'$ are disjoint. This is a contradiction because $\Phi(f)(A')$ and $O'f(x_0)A'$ contain $\Phi(f)(A'\cap C_{O'})$ which has positive measure. 

Since the action is suitable,  $(1)$ (of Definition \ref{df:suitable}) has to hold. So there is $h\in O$ such that \[\mu(\Phi(f)(A_O)\cap h f(x_0)A_O)>0.\]
 Set $ A:=A_O\cap f(x_0)^{-1}h^{-1}\Phi(f)(A_O)$ and for every $x\in A$ put $g(x):=hf(x_0)$. The couple $(A,g)$ is good, because for every $x\in A$ we have that $f(x)f(x_0)^{-1}\in O$ and \[f(x)g(x)^{-1}=f(x)f(x_0)^{-1}h\in O^2\subset N.\qedhere\] 
\end{proof}

\subsubsection*{Step 2}

\noindent For a neighborhood $N$ of the identity in $G$ and $\eps>0$, we now define the order on the family of $N$-good couples associated to a function $f\in \widetilde{[\mathcal R_G]}$. 

\begin{df}
  Let $(A_1,g_1)$ and $(A_2,g_2)$ be two good couples. We say that $(A_1,g_1)\prec (A_2,g_2)$ if $A_2\supseteq A_1$ almost everywhere and if \[\mu(\lbrace x\in A_1:\ g_1(x)\neq g_2(x)\rbrace)\leq \frac{1}{\eps}(\mu(A_2)-\mu(A_1)).\]
\end{df}

We identify two good couples $(A_1,g_1)$ and $(A_2,g_2)$ if $\mu(A_1\bigtriangleup A_2)=0$ and for almost all $x\in A_1\cap A_2$, $g_1(x)=g_2(x)$. This makes the relation $\prec$ antisymmetric. It is moreover clearly reflexive.

\begin{lem}
The relation $\prec$ is an order relation on the set of good couples. 
\end{lem}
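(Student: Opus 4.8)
The relation has already been observed in the text to be reflexive and antisymmetric (once good couples agreeing almost everywhere are identified), so the plan is to verify the one remaining axiom, transitivity; this is the entire content of the lemma. Concretely, I would assume $(A_1,g_1)\prec (A_2,g_2)$ and $(A_2,g_2)\prec (A_3,g_3)$ and aim to deduce $(A_1,g_1)\prec (A_3,g_3)$, checking the two clauses of the definition of $\prec$ in turn.

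The inclusion clause is immediate: from $A_1\subseteq A_2$ and $A_2\subseteq A_3$ almost everywhere, transitivity of almost-everywhere inclusion gives $A_1\subseteq A_3$ almost everywhere. The substantive point is the inequality bounding the disagreement set, and the key observation is a pointwise triangle inequality: for $x\in A_1$, if $g_1(x)\neq g_3(x)$ then $g_1(x)\neq g_2(x)$ or $g_2(x)\neq g_3(x)$, so that
\[
\{x\in A_1:\ g_1(x)\neq g_3(x)\}\subseteq \{x\in A_1:\ g_1(x)\neq g_2(x)\}\cup\{x\in A_1:\ g_2(x)\neq g_3(x)\}.
\]
Since $A_1\subseteq A_2$ up to a null set, the second set on the right is contained in $\{x\in A_2:\ g_2(x)\neq g_3(x)\}$. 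I would then apply subadditivity of $\mu$ followed by the two hypotheses to obtain
\[
\mu(\{x\in A_1:\ g_1(x)\neq g_3(x)\})\leq \tfrac{1}{\eps}(\mu(A_2)-\mu(A_1))+\tfrac{1}{\eps}(\mu(A_3)-\mu(A_2)),
\]
and the right-hand side telescopes to $\tfrac{1}{\eps}(\mu(A_3)-\mu(A_1))$, which is exactly the bound required for $(A_1,g_1)\prec (A_3,g_3)$.

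There is no serious obstacle here: the only point demanding a little care is to carry out all inclusions and identifications consistently up to measure zero, so that they are compatible with the quotient by almost-everywhere equality used to make $\prec$ antisymmetric. It is also worth noting that the exact telescoping of the measure terms is what keeps the constant $\tfrac{1}{\eps}$ from deteriorating across the composition — this is precisely why the displayed inequality in the definition of $\prec$ is phrased with the measure difference $\mu(A_2)-\mu(A_1)$ rather than some looser quantity. Combining transitivity with the already-established reflexivity and antisymmetry, I would conclude that $\prec$ is an order relation on the set of good couples.
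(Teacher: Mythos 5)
Your proof is correct and follows essentially the same route as the paper's: the same pointwise inclusion of the disagreement set into the union of the two intermediate disagreement sets, followed by subadditivity and the telescoping of the $\frac{1}{\eps}(\mu(A_{i+1})-\mu(A_i))$ bounds. The only (harmless) difference is that you spell out the intermediate step of enlarging $\{x\in A_1:\ g_2(x)\neq g_3(x)\}$ to $\{x\in A_2:\ g_2(x)\neq g_3(x)\}$ using $A_1\subseteq A_2$ almost everywhere, which the paper leaves implicit.
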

\begin{proof}
The only fact left to prove is that $\prec$ is transitive. For this suppose that \[(A_1,g_1)\prec (A_2,g_2)\prec (A_3,g_3),\] then \[\lbrace x\in A_1:\ g_1(x)\neq g_3(x)\rbrace\subset \lbrace x\in A_1:\ g_1(x)\neq g_2(x)\rbrace\cup \lbrace x\in A_2:\ g_2(x)\neq g_3(x)\rbrace,\]
so we get 
\begin{align*}
  \mu(\lbrace x\in A_1:\ &g_1(x)\neq g_3(x)\rbrace)
  \\\leq &\mu(\lbrace x\in A_1:\ g_1(x)\neq g_2(x)\rbrace)+\mu(\lbrace x\in A_2:\ g_2(x)\neq g_3(x)\rbrace) \\
  \leq &\frac{1}{\eps}(\mu(A_2)-\mu(A_1))+\frac{1}{\eps}(\mu(A_3)-\mu(A_2))\\
  = &\frac{1}{\eps}(\mu(A_3)-\mu(A_1)).\qedhere
\end{align*}
\end{proof}

The following proposition is the core of the proof of Theorem \ref{thm:fmlsui}.

\begin{prop}\label{prop:bitbig}
For every good couple $(A,g)$ with $\mu(A)< 1-\eps$, there exists a good couple $(A',g')$ such that $(A,g)\prec (A',g')$ and $\mu(A'\setminus A)>0$.
\end{prop}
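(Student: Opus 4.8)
The plan is to enlarge a good couple $(A,g)$ by a positive-measure amount whenever $\mu(A)<1-\eps$, using Step 1 (Proposition \ref{prop:ess}) applied to the leftover set $B:=X\setminus A$ and then reconciling the newly produced piece with the existing $g$ via a Hall-type marriage argument so that injectivity of $\Phi$ is preserved globally. First I would apply Proposition \ref{prop:ess} to $B=X\setminus A$, which has positive measure since $\mu(A)<1-\eps<1$, to obtain a good couple $(C,h)$ with $C\subset B$ of positive measure and crucially $\Phi(h)(C)\subset \Phi(f)(B)$. The naive idea is to glue, setting $A':=A\cup C$ and $g'=g$ on $A$, $g'=h$ on $C$. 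This immediately gives condition (2) of goodness and makes $A'\setminus A=C$ have positive measure, but the obstruction is injectivity of $\Phi(g')$: the images $\Phi(h)(C)$ and $\Phi(g)(A)$ may overlap, since both live in the same countable orbit structure, so condition (3) can fail on a collision set.

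The heart of the argument, and the step I expect to be the main obstacle, is repairing these collisions while keeping the $\prec$-comparison inequality $\mu(\{x\in A: g(x)\neq g'(x)\})\leq \frac1\eps(\mu(A')-\mu(A))$ under control. The approach I would take follows Hudson's strategy: think of this as a measurable matching problem where the points of $C$ must be assigned target images in a way compatible with the already-assigned images on $A$. Since all functions have countable image and the relevant maps are elements of the pseudo-full group of a countable group $\Gamma$ (generated by the ranges of $f$ and $h$), I can work fiberwise over the countable equivalence relation $\RR_\Gamma$ and invoke a measurable version of Hall's marriage theorem. Concretely, on the collision set where $\Phi(h)(C)$ meets $\Phi(g)(A)$, I would use the abundance of targets guaranteed by $\Phi(h)(C)\subset\Phi(f)(B)$ together with the measure-preservation of pseudo-full group elements (as recalled in Lemma \ref{lem: ext countable range}) to reroute either the conflicting points of $C$ or a correspondingly small portion of $A$ to fresh images inside the orbit, all within the neighborhood $N$.

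The delicate bookkeeping is ensuring that whatever modifications we make to $g$ on a subset of $A$ are \emph{charged} against the measure gain $\mu(A'\setminus A)=\mu(C)$, so that the factor $\frac1\eps$ in the definition of $\prec$ absorbs them. This is exactly where the constant $\eps$ enters: the matching must be arranged so that the set of points where $g$ is altered has measure at most $\frac1\eps$ times the genuinely new measure acquired. I would set up the Hall condition so that each unit of rerouting on $A$ is matched by a proportional gain in $A'\setminus A$, which is feasible because the ergodic/pseudo-full-group structure provides enough room: any two sets of equal $\RR_\Gamma$-conditional measure are interchangeable by a pseudo-full group element. Once the collisions are resolved and injectivity of $\Phi(g')$ is restored, the resulting couple $(A',g')$ is good, satisfies $(A,g)\prec(A',g')$, and has $\mu(A'\setminus A)>0$, completing the proof. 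I expect the measurable selection and the verification of the Hall condition in this fibered setting to be the genuinely technical core, whereas the gluing and the final goodness checks are routine.
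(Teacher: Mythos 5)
There is a genuine gap, and it sits exactly where you flagged ``the main obstacle'': your collision-repair step is a black box, and the tools you propose for it do not work. The obstruction is worse than your write-up acknowledges: it can happen that $\Phi(f)(X\setminus A)\subseteq \Phi(g)(A)$ \emph{entirely} (this is the situation drawn in the paper's figure). In that case there are no ``fresh images'' available for points of $C$ at all, since any good couple produced by Proposition \ref{prop:ess} from a subset of $X\setminus A$ has its image inside $\Phi(f)(X\setminus A)\subseteq\Phi(g)(A)$; so the repair \emph{must} reroute points of $A$. But rerouting a point $y\in A$ while keeping condition (2) of goodness ($f(y)g'(y)\inv\in N$) cannot be done by the mechanism you cite --- interchanging sets of equal $\RR_\Gamma$-conditional measure by pseudo-full group elements, as in Lemma \ref{lem: ext countable range}, gives no control on proximity to $f$ and is used in the paper only to extend an already-constructed good couple. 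The rerouting of points of $A$ must itself go through Proposition \ref{prop:ess}, whose output may again land inside $\Phi(g)(A)$, creating new collisions; the proof therefore requires an \emph{iterated} augmenting chain, and one must prove this iteration terminates.

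This termination bound is the mathematical core of the proposition, and it is where the hypothesis $\mu(A)<1-\eps$ is used --- a hypothesis your proposal never exploits quantitatively. The paper (Lemma \ref{lem:step 1}) sets $B_1:=X\setminus A$, $C_1:=\Phi(f)(B_1)$, and recursively $B_i:=\Phi(g)\inv(C_{i-1}\cap\Phi(g)(A))$, $C_i:=\Phi(f)(B_i)$; the $C_i$ are pairwise disjoint of measure $\mu(B_1)>\eps$, so at most $1/\eps-1$ of them fit inside $\Phi(g)(A)$, and the chain exits $\Phi(g)(A)$ after some $k\leq 1/\eps$ steps. One then applies Proposition \ref{prop:ess} forward along the chain and backtracks to define $g'$, which differs from $g$ only on $k-1$ sets, each of the same measure as the gained set $B_1$; this is precisely what justifies
\[
\mu(\lbrace x\in A:\ g(x)\neq g'(x)\rbrace)\leq k\,\mu(B_1)\leq \tfrac{1}{\eps}\bigl(\mu(A')-\mu(A)\bigr).
\]
Your claim that ``the factor $\frac1\eps$ in the definition of $\prec$ absorbs'' the modifications is exactly this chain-length estimate, and invoking an unspecified ``measurable Hall's marriage theorem'' does not supply it: any such theorem applicable here (with the constraint (2) built in and with the $\prec$-bookkeeping) would have to be proved by essentially the chain construction above. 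So the high-level framing (Hudson-style augmenting argument) matches the paper, but the proof as proposed is incomplete at its decisive step.
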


We would like to say that for every good couple $(A,g)$ there is $B\subset X\setminus A$ such that $\Phi(f)(B)\cap \Phi(g)(A)=\emptyset$. When this is the case, we can conclude using Proposition \ref{prop:ess}. The problem is that this is not always possible, but it is possible in a finite number of steps.

\begin{figure}[h]
\centering

\tikzset{middlearrow/.style={
        decoration={markings,
            mark= at position 0.5 with {\arrow[scale=1.5]{#1}} ,
        },
        postaction={decorate}
    }
}

\definecolor{ffffff}{rgb}{1.,1.,1.}
\begin{tikzpicture}[line cap=round,line join=round,>=triangle 45,x=4cm,y=2cm]
\clip(0.5,0.6) rectangle (5.4,3.5);
\draw [line width=1.2pt] (1.,3.)-- (4.,3.);
\draw [line width=1.2pt] (4.,1.)-- (1.,1.);
\draw [line width=1.2pt] (1.2,3.1)-- (1.6,3.1);
\draw [line width=1.2pt] (2.2,3.1)-- (2.6,3.1);
\draw [line width=1.2pt] (3.2,3.1)-- (3.6,3.1);
\draw [line width=1.2pt] (1.2,0.9)-- (1.6,0.9);
\draw [line width=1.2pt] (2.2,0.9)-- (2.6,0.9);
\draw [line width=1.2pt] (3.2,0.9)-- (3.6,0.9);
\draw [line width=2.pt] (2.,0.95)-- (4.,0.95);
\draw [line width=2.pt] (1.,3.05)-- (3.,3.05);
\draw [ dash pattern=on 1pt off 1pt] (1.2,1.)-> (1.2,3.);
\draw (1.2,2.02142258262) -- (1.22754332051,2.);
\draw (1.2,2.02142258262) -- (1.17245667949,2.);
\draw (1.2,1.97857741738) -- (1.22754332051,1.95715483476);
\draw (1.2,1.97857741738) -- (1.17245667949,1.95715483476);
\draw (1.2,2.06426774786) -- (1.22754332051,2.04284516524);
\draw (1.2,2.06426774786) -- (1.17245667949,2.04284516524);
\draw [dash pattern=on 1pt off 1pt] (1.4,1.)-- (1.4,3.);
\draw (1.4,2.02142258262) -- (1.42754332051,2.);
\draw (1.4,2.02142258262) -- (1.37245667949,2.);
\draw (1.4,1.97857741738) -- (1.42754332051,1.95715483476);
\draw (1.4,1.97857741738) -- (1.37245667949,1.95715483476);
\draw  (1.4,2.06426774786) -- (1.42754332051,2.04284516524);
\draw (1.4,2.06426774786) -- (1.37245667949,2.04284516524);
\draw [dash pattern=on 1pt off 1pt] (1.6,1.)-- (1.6,3.);
\draw (1.6,2.02142258262) -- (1.62754332051,2.);
\draw (1.6,2.02142258262) -- (1.57245667949,2.);
\draw (1.6,1.97857741738) -- (1.62754332051,1.95715483476);
\draw (1.6,1.97857741738) -- (1.57245667949,1.95715483476);
\draw (1.6,2.06426774786) -- (1.62754332051,2.04284516524);
\draw (1.6,2.06426774786) -- (1.57245667949,2.04284516524);
\draw [dash pattern=on 1pt off 1pt] (2.2,1.)-- (2.2,3.);
\draw (2.2,2.02142258262) -- (2.22754332051,2.);
\draw (2.2,2.02142258262) -- (2.17245667949,2.);
\draw (2.2,1.97857741738) -- (2.22754332051,1.95715483476);
\draw (2.2,1.97857741738) -- (2.17245667949,1.95715483476);
\draw (2.2,2.06426774786) -- (2.22754332051,2.04284516524);
\draw (2.2,2.06426774786) -- (2.17245667949,2.04284516524);
\draw [dash pattern=on 1pt off 1pt] (2.4,1.)-- (2.4,3.);
\draw (2.4,2.02142258262) -- (2.42754332051,2.);
\draw (2.4,2.02142258262) -- (2.37245667949,2.);
\draw (2.4,1.97857741738) -- (2.42754332051,1.95715483476);
\draw (2.4,1.97857741738) -- (2.37245667949,1.95715483476);
\draw (2.4,2.06426774786) -- (2.42754332051,2.04284516524);
\draw (2.4,2.06426774786) -- (2.37245667949,2.04284516524);
\draw [dash pattern=on 1pt off 1pt] (2.6,1.)-- (2.6,3.);
\draw (2.6,2.02142258262) -- (2.62754332051,2.);
\draw (2.6,2.02142258262) -- (2.57245667949,2.);
\draw (2.6,1.97857741738) -- (2.62754332051,1.95715483476);
\draw (2.6,1.97857741738) -- (2.57245667949,1.95715483476);
\draw (2.6,2.06426774786) -- (2.62754332051,2.04284516524);
\draw (2.6,2.06426774786) -- (2.57245667949,2.04284516524);
\draw [dash pattern=on 1pt off 1pt] (3.2,1.)-- (3.2,3.);
\draw (3.2,2.02142258262) -- (3.22754332051,2.);
\draw (3.2,2.02142258262) -- (3.17245667949,2.);
\draw (3.2,1.97857741738) -- (3.22754332051,1.95715483476);
\draw (3.2,1.97857741738) -- (3.17245667949,1.95715483476);
\draw (3.2,2.06426774786) -- (3.22754332051,2.04284516524);
\draw (3.2,2.06426774786) -- (3.17245667949,2.04284516524);
\draw [dash pattern=on 1pt off 1pt] (3.4,1.)-- (3.4,3.);
\draw (3.4,2.02142258262) -- (3.42754332051,2.);
\draw (3.4,2.02142258262) -- (3.37245667949,2.);
\draw (3.4,1.97857741738) -- (3.42754332051,1.95715483476);
\draw (3.4,1.97857741738) -- (3.37245667949,1.95715483476);
\draw (3.4,2.06426774786) -- (3.42754332051,2.04284516524);
\draw (3.4,2.06426774786) -- (3.37245667949,2.04284516524);
\draw [dash pattern=on 1pt off 1pt] (3.6,1.)-- (3.6,3.);
\draw (3.6,2.02142258262) -- (3.62754332051,2.);
\draw (3.6,2.02142258262) -- (3.57245667949,2.);
\draw (3.6,1.97857741738) -- (3.62754332051,1.95715483476);
\draw (3.6,1.97857741738) -- (3.57245667949,1.95715483476);
\draw (3.6,2.06426774786) -- (3.62754332051,2.04284516524);
\draw (3.6,2.06426774786) -- (3.57245667949,2.04284516524);
\draw [middlearrow={stealth}, dotted] (2.2,1.)-- (1.2,3.);
\draw [middlearrow={stealth}, dotted] (2.4,1.)-- (1.4,3.);
\draw [middlearrow={stealth}, dotted] (2.6,1.)-- (1.6,3.);
\draw [middlearrow={stealth}, dotted] (3.2,1.)-- (2.2,3.);
\draw [middlearrow={stealth}, dotted] (3.4,1.)-- (2.4,3.);
\draw [middlearrow={stealth}, dotted] (3.6,1.)-- (2.6,3.);
\draw [line width=2.pt,color=ffffff] (1.4,1.6)-- (1.4,1.4);
\draw [line width=2.pt,color=ffffff] (2.,1.8)-- (2.10324818062,1.59350363876);
\draw [line width=2.pt,color=ffffff] (2.4,1.6)-- (2.4,1.4);
\draw [line width=2.pt,color=ffffff] (3.,1.8)-- (3.10400838345,1.59198323309);
\draw [line width=2.pt,color=ffffff] (3.4,1.6)-- (3.4,1.4);
\draw (1.33,1.67) node[anchor=north west] {$\Phi(f)$};
\draw (2.33,1.67) node[anchor=north west] {$\Phi(f)$};
\draw (3.33,1.67) node[anchor=north west] {$\Phi(f)$};
\draw (1.9,1.85) node[anchor=north west] {$\Phi(g)$};
\draw (2.9,1.85) node[anchor=north west] {$\Phi(g)$};
\draw (2.89331811076,0.935771686531) node[anchor=north west] {$A$};
\draw (1.35497265409,0.919449718821) node[anchor=north west] {$D_1$};
\draw (2.35877366825,0.919449718821) node[anchor=north west] {$D_2$};
\draw (3.35441369857,0.915369226893) node[anchor=north west] {$D_3$};
\draw (1.35089216216,3.43) node[anchor=north west] {$E_1$};
\draw (2.35877366825,3.43) node[anchor=north west] {$E_2$};
\draw (3.34625271471,3.43) node[anchor=north west] {$E_3$};
\draw (1.7,3.38) node[anchor=north west] {$\Phi(g)(A)$};
\end{tikzpicture}
In the figure $\Phi(f)$ acts vertically and $\Phi(g)$ acts diagonally. \\
Since $\Phi(f)(X\setminus A)\subset \Phi(g)(A)$, we can not use Proposition \ref{prop:ess} directly. 
\label{fig:M1}
\end{figure}

\begin{lem}\label{lem:step 1}
    There are $k\in\N$ with $k\leq 1/\eps$ and two sequences $\{D_i\}_{i\leq k}$ and $\{E_i\}_{i\leq k}$ of measurable subsets of $X$ of positive measure such that 
\begin{enumerate}
\item the $\{D_i\}_{i\leq k}$ are pairwise disjoint as are the $\{E_i\}_{i\leq k}$,
\item $D_1\subset X\setminus A$ and $D_i\subset A$ for $i>1$,
\item $E_k\subset X\setminus \Phi(g)(A)$ and $E_i\subset \Phi(g)(A)$ for $i<k$,
\item $\Phi(f)(D_k)=E_k$ and $E_{k-1}=\Phi(g)(D_k)$.  
\end{enumerate}
\end{lem}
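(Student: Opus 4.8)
The plan is to run an augmenting-path argument for the ``matching'' given by $\Phi(g)$ against the ``target matching'' given by $\Phi(f)$. Since $(A,g)$ is good, $\Phi(g)$ is an injective measure-preserving map from $A$ onto $\Phi(g)(A)$ (decompose $A$ according to the countable image of $g$ to see measure preservation), and $\Phi(f)\in\Aut(X,\mu)$. I would therefore introduce the return map $T:=\Phi(g)\inv\circ\Phi(f)$ defined on $W:=\Phi(f)\inv(\Phi(g)(A))$; it is injective, measure-preserving, and maps $W$ into $A$. For $x\in X$ let $n(x)\in\N\cup\{\infty\}$ be the \emph{escape time}, the least $j\geq 0$ with $\Phi(f)(T^j(x))\notin\Phi(g)(A)$ (equivalently $T^j(x)\notin W$). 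The desired path will be a positive-measure $D_1\subset X\setminus A$ all of whose points have escape time exactly $k-1$, together with its forward iterates $D_{i+1}:=T(D_i)$ and the sets $E_i:=\Phi(f)(D_i)$; everything then reduces to finding a positive-measure set of small escape time inside $X\setminus A$.

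First I would show that $n<\infty$ almost everywhere on $X\setminus A$. If $Z:=\{x\in X\setminus A: n(x)=\infty\}$ had positive measure, its forward iterates $T^m(Z)$ ($m\geq 0$) would be well defined, of constant measure $\mu(Z)$, and pairwise disjoint (because $Z\subset X\setminus A$ while $T^m(Z)\subset A$ for $m\geq 1$, and $T$ is injective), contradicting finiteness of $\mu$. Writing $N_m:=\{x\in X\setminus A: n(x)=m\}$, this gives $\sum_{m\geq 0}\mu(N_m)=\mu(X\setminus A)=1-\mu(A)>\eps$.

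The quantitative heart of the argument is the bound $\sum_{m\geq0}(m+1)\mu(N_m)\leq 1$. To get it, set $V_j:=\{x\in X\setminus A: n(x)\geq j\}$; then $T^j$ is injective and measure-preserving on $V_j$, so $\mu(T^j(V_j))=\mu(V_j)$, and the sets $T^j(V_j)$ ($j\geq 0$) are pairwise disjoint by the same injectivity-plus-$A$-versus-complement argument as above. Summing their measures and using $\mu(V_j)=\sum_{m\geq j}\mu(N_m)$ yields $\sum_{m}(m+1)\mu(N_m)=\sum_j\mu(V_j)\leq 1$. Now if $\mu(N_m)=0$ for every $m$ with $m+1\leq 1/\eps$, then all the mass sits on indices with $m+1>1/\eps$, whence $1\geq\sum_m(m+1)\mu(N_m)>\frac1\eps\sum_m\mu(N_m)=\frac1\eps(1-\mu(A))>1$, a contradiction. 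Hence there is $m$ with $\mu(N_m)>0$ and $k:=m+1\leq 1/\eps$.

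With this $k$ in hand, I would set $D_1:=N_{k-1}$, $D_{i+1}:=T(D_i)$ for $1\le i<k$, and $E_i:=\Phi(f)(D_i)$ for $1\le i\le k$. Since $n(T(x))=n(x)-1$ whenever $n(x)\geq 1$, one has $D_i\subset\{n=k-i\}$, so the $D_i$ lie in distinct level sets (hence are disjoint), with $D_1\subset X\setminus A$ and $D_i\subset A$ for $i>1$, while $E_i\subset\Phi(g)(A)$ for $i<k$ and $E_k\subset X\setminus\Phi(g)(A)$; the $E_i$ are disjoint because $\Phi(f)$ is injective, and all these sets have measure $\mu(N_{k-1})>0$ since $T$ and $\Phi(f)$ preserve measure. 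Finally $\Phi(f)(D_k)=E_k$ by definition and $\Phi(g)(D_k)=\Phi(g)(T(D_{k-1}))=\Phi(f)(D_{k-1})=E_{k-1}$, giving property (4). The main obstacle is exactly the uniform length bound $k\leq1/\eps$: it is the weighted count $\sum_m(m+1)\mu(N_m)\leq1$, obtained from disjointness of the iterated images $T^j(V_j)$, that forces a positive-measure escape set at some small time rather than allowing all the escaping mass to drain off at arbitrarily large times.
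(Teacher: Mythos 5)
Your proof is correct and takes essentially the same approach as the paper: your iterated images $T^j(V_j)$ of the return map $T=\Phi(g)^{-1}\circ\Phi(f)$ are exactly the paper's recursively defined sets $B_{j+1}=\Phi(g)^{-1}(C_j\cap \Phi(g)(A))$, and the disjointness-of-iterates argument (complement of $A$ versus image of $T$) giving the $1/\eps$ bound is the same pigeonhole the paper uses. The only cosmetic differences are that you run a weighted count over all escape times and build the path forward from $D_1=N_{k-1}$, whereas the paper stops at the first index where positive measure escapes and constructs the path backward via $E_i:=\Phi(g)(D_{i+1})$, $D_i:=\Phi(f)^{-1}(E_i)$.
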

\begin{proof}
Set $B_1:=X\setminus A$ and $C_1:=\Phi(f)(B_1)$. For $i\geq 2$ define recursively 
\[ B_i:=\Phi(g)^{-1}(C_{i-1}\cap \Phi(g)(A)) \text{ and } C_i:=\Phi(f)(B_i).\]

Observe that $\{B_i\}_i$ are pairwise disjoint as are the $\{C_i\}_i$. Suppose now that for $l\geq 1$, we have that $C_i\subset \Phi(g)(A)$ for all $i\leq l$. Since $\Phi(g)$ and $\Phi(f)$ preserve the measure, we have that $\mu(C_i)=\mu(B_1)$ for all $i\leq l$ and hence we have that $l\mu(B_1)\leq 1-\mu(B_1)$. By hypothesis $\mu(B_1)\geq
\eps$, so $l\leq 1/\eps-1$. Therefore there exists $k\leq 1/\eps$, such that $C_k$ is not contained in $\Phi(g)(A)$ and $C_i\subset \Phi(g)(A)$ for every $i<k$. 

Put $E_k:= C_k\setminus\Phi(g)(A)$ and set $D_k:=\Phi(f)^{-1}(E_k)$. Observe that $D_k\subset B_k$ and define recursively $E_i:=\Phi(g)(D_{i+1})$ and $D_i:=\Phi(f)^{-1}(E_i)$. 
\end{proof}

\begin{proof}[Proof of Proposition \ref{prop:bitbig}]
Consider the families $\{D_i\}_{i\leq k}$ and $\{E_i\}_{i\leq k}$ defined in the previous lemma. By Proposition \ref{prop:ess}, there exists a good couple $(A_1,g_1)$ such that $A_1\subset D_1$ and $\Phi(g_1)(A_1)\subset \Phi(f)(A_1)\subset E_1$. For $i\in \{2,\ldots,k\}$, whenever $A_{i-1}$ is defined, we set \[A'_i:=\Phi(g)^{-1}(\Phi(g_{i-1})(A_{i-1}))\subset D_i.\] 
For every $i$ such that $A_i'$ is defined, Proposition \ref{prop:ess} implies that there is a good couple $(A_i,g_i)$ such that $A_i\subset A_i'$ is non-negligible and $\Phi(g_i)(A_i) \subset \Phi(f)(A_i)\subset E_i$. Put $B_k:=A_k$. For $i\in \{1,\ldots,k-1\}$, we define recursively $B_i:=\Phi(g_i)^{-1}(\Phi(g)(B_{i+1}))$. 

Set $A':=A\cup B_1$ and define \[g'(x):=\left\lbrace\begin{array}{ll} g(x)&\text{ if }x\in A\setminus \cup_{i\geq 2} B_i, \\ g_i(x)&\text{ if }x\in B_i.\end{array}\right .\]

By construction, $\Phi(g'):A'\to X$ is injective and preserves the measure. Moreover $(A',g')$ is obtained by \textit{cutting and pasting} $N$-good couples, so it is an $N$-good couple. Let us finally check that $(A,g)\prec (A',g')$. Clearly we have $A'\supset A$ and $\mu(A'\setminus A)=\mu(B_1)>0$. Moreover 
\[ \mu(\lbrace x\in A:\ g(x)\neq g'(x)\rbrace)\leq
  \mu\left(\cup_{i\geq 2} B_i\right)\leq k\mu(B_1)\leq\frac{1}{\eps}(\mu(A')-\mu(A)).\qedhere\]
\end{proof}

\subsubsection*{Step 3}

\noindent We verify now that we can apply Zorn's Lemma to the set of good couples. 

\begin{prop}\label{prop:zornification}
  Every chain for $\prec$ has an upper bound. 
\end{prop}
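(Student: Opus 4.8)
The plan is to realize the upper bound as an a.e. pointwise limit along a well-chosen countable cofinal subsequence of the chain, and then to check separately that this limit (i) is a good couple and (ii) dominates the entire chain, not merely the subsequence.

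First I would set $s:=\sup\{\mu(A):(A,g)\in\mathcal C\}\leq 1$ where $\mathcal C$ is the given chain (which we may assume nonempty, since good couples exist by Proposition \ref{prop:ess}). Choosing couples whose domains have measure $>s-1/n$ and replacing the $n$-th one by the $\prec$-maximum of the first $n$ of them (a maximum exists because any finite subset of a chain is totally ordered), I obtain a $\prec$-increasing sequence $(\tilde A_n,\tilde g_n)$ with $\mu(\tilde A_n)\uparrow s$. Set $A_\infty:=\bigcup_n\tilde A_n$. The defining inequality of $\prec$ together with a telescoping sum gives $\sum_n\mu(\{x\in\tilde A_n:\tilde g_n(x)\neq \tilde g_{n+1}(x)\})\leq \frac1\eps(s-\mu(\tilde A_1))<\infty$, so by Borel--Cantelli, for a.e. $x\in A_\infty$ the sequence $\tilde g_n(x)$ is eventually constant; I define $g_\infty(x)$ to be this eventual value. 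Conditions (1) and (2) of a good couple are then immediate: $g_\infty$ has countable image (contained in $\bigcup_n\img\tilde g_n$), and for a.e. $x$ we have $f(x)g_\infty(x)\inv=f(x)\tilde g_n(x)\inv\in N$ for all large $n$.

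The main obstacle is verifying condition (3), i.e. that $\Phi(g_\infty)$ is injective modulo null sets. I would argue via level sets: enumerate the countable value set as $\{\gamma_j\}$ and put $A^j:=g_\infty\inv(\gamma_j)$ and $\tilde A_n^j:=\tilde g_n\inv(\gamma_j)\cap\tilde A_n$, so that $\Phi(g_\infty)$ is injective iff the sets $\{\gamma_j A^j\}_j$ are essentially pairwise disjoint. Since $\tilde A_n\uparrow A_\infty$ and $\tilde g_n\to g_\infty$ pointwise a.e., one checks $\mathbf 1_{\tilde A_n^j}\to\mathbf 1_{A^j}$ a.e. for each $j$; applying the measure-preserving transformation $\gamma_j$ preserves a.e. convergence, so $\mathbf 1_{\gamma_j\tilde A_n^j}\to\mathbf 1_{\gamma_j A^j}$ a.e. For each $n$ the injectivity of $\Phi(\tilde g_n)$ forces $\gamma_j\tilde A_n^j$ and $\gamma_{j'}\tilde A_n^{j'}$ to be disjoint when $j\neq j'$; passing to the a.e. limit yields $\mathbf 1_{\gamma_j A^j}\mathbf 1_{\gamma_{j'}A^{j'}}=0$ a.e., and a countable union of null sets over all pairs $(j,j')$ gives the required disjointness. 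Hence $(A_\infty,g_\infty)$ is a good couple.

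Finally I would show $(A_\infty,g_\infty)$ is an upper bound for all of $\mathcal C$. Given any $(A,g)\in\mathcal C$, total ordering of the chain gives, for each $n$, comparability with $(\tilde A_n,\tilde g_n)$. If $(A,g)\prec(\tilde A_n,\tilde g_n)$ for all large $n$, then $A\subseteq A_\infty$ a.e., and since $\{x\in A:g(x)\neq g_\infty(x)\}\subseteq\liminf_n\{x\in A:g(x)\neq\tilde g_n(x)\}$, Fatou's lemma turns the inequalities $\mu(\{x\in A:g(x)\neq\tilde g_n(x)\})\leq\frac1\eps(\mu(\tilde A_n)-\mu(A))$ into $\mu(\{x\in A:g(x)\neq g_\infty(x)\})\leq\frac1\eps(s-\mu(A))$, i.e. $(A,g)\prec(A_\infty,g_\infty)$. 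Otherwise $(\tilde A_n,\tilde g_n)\prec(A,g)$ for infinitely many $n$, forcing $\mu(A)=s$ and $A_\infty\subseteq A$ a.e.; a further Borel--Cantelli argument along that subsequence then shows $g=g_\infty$ a.e., so $(A,g)=(A_\infty,g_\infty)$. In either case $(A,g)\prec(A_\infty,g_\infty)$, which completes the proof.
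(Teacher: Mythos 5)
Your proof is correct and follows essentially the same route as the paper's: both extract a countable $\prec$-increasing sequence of couples whose measures tend to the supremum, use the telescoping $\frac1\eps$ bound from the definition of $\prec$ to show the functions stabilize almost everywhere (the paper via the explicit stabilization sets $C_n=\bigcap_{k\geq n}B_k$, you via Borel--Cantelli), and take the stabilized limit as the upper bound. The remaining differences are bookkeeping: the paper gets injectivity by noting that any two points of the limit domain lie in a common $C_m$ on which the limit function agrees with the injective $\Phi(g_m)$ (simpler than your level-set argument), and it dominates the whole chain by observing that within a chain $\mu(A_c)\leq\mu(A_n)$ already forces $(A_c,g_c)\prec(A_n,g_n)$, which avoids your second case --- where, strictly speaking, the error terms $\frac1\eps(s-\mu(\tilde A_n))$ need not be summable, so you should invoke Fatou (as in your first case) or pass to a further subsequence rather than Borel--Cantelli.
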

\begin{proof}
Let us assume for the moment that $\{(A_n,g_n)\}_n$ is a countable chain of good couples. For every $n\in \N$ set \[B_n:=\{x\in A_n:\ g_n(x)=g_{n+1}(x)\},\quad C_n:=\cap_{k\geq n} B_n\quad\text{and}\quad A:=\cup_n C_n.\]

Clearly $A\subset\cup_n A_n$ and we now check that the two measurable subsets have the same measure. In fact, since $\{A_n\}_n$ and $\{C_n\}_n$ are increasing sequences, for every $\eta>0$, there is $K\in\N$ such that \[\mu(\cup_nA_n)-\mu(A_K)<\eta\quad\text{and}\quad \mu(\cup_nC_n)-\mu(C_K)<\eta,\]
hence we have
  \begin{align*}
    \mu(\cup_n A_n)-\mu(A)\leq& 2\eta+\mu(A_K)-\mu(C_K)=2\eta+\mu(A_K\setminus C_K)\\
    =&2\eta +\mu(A_K\cap (\cup_{k\geq K} X\setminus B_k))=2\eta+\mu(\cup_{k\geq K} A_K\setminus B_k) \\
 \leq &2\eta+\sum_{k\geq K}\mu(A_k\setminus B_k)\leq 2\eta+\frac{1}{\eps}\sum_{k\geq K}\mu(A_{k+1}\setminus A_k)\\
 \leq &2\eta+\frac{1}{\eps}\mu(\cup_{k\geq K+1}A_k\setminus A_K)\leq 2\eta+ \frac{\eta}{\eps}. 
  \end{align*}

As $\eta$ is arbitrarily small, we get that $A=\cup_n A_n$ almost everywhere. For $x\in C_n$, observe that $g_n(x)=g_{n+j}(x)$ for every $j\geq 0$. We define \[g(x):=g_n(x)\quad\text{if}\quad x\in C_n.\]
The couple $(A,g)$ is obtained by cutting and pasting $N$-good couples so the couple is $N$-good. Moreover $A\supseteq \cup_n A_n$ almost everywhere and for every $n\in\N$, we have \[\mu(x\in A_n:\ g_n(x)\neq g(x))\leq \mu(A_n\setminus C_n) \leq \frac{1}{\eps} \sum_{k\geq n}\mu(A_{k+1}-A_k)=\frac{1}{\eps}(\mu(A)-\mu(A_n)).\]

 Therefore the couple $(A,g)$ is an upper bound for the countable chain. Consider now an arbitrary chain $\{(A_c,g_c)\}_{c\in C}$ and set $\lambda=\sup_{c\in C} \mu(A_c)$. If there is a good couple $(A_c,g_c)$ such that $\mu(A_c)=\lambda$, then this couple is an upper bound of the chain and there is nothing to prove. Suppose that this is not the case and consider a subsequence $\{(A_n,g_n)\}_{n\in\N}$ of the chain such that $\lim_n \mu(A_n)=\lambda$. Let $(A,g)$ be an upper bound for this sequence. Given any element of the chain $(A_c,g_c)$ there exists $n$ such that $\mu(A_c)\leq \mu(A_n)$ and hence $(A_c,g_c)\prec (A_n,g_n)\prec (A,\ph)$. 
\end{proof}

\subsubsection*{End of the proof of Theorem \ref{thm:fmlsui}}

Let $f\in \widetilde{[\mathcal R_G]}$. By definition of the topology of convergence in measure, a base of neighborhoods of $f$ is given by the open sets \[\mathcal U_{\eps,N}:=\left\lbrace g\in \widetilde{[\mathcal R_G]}:\ \mu\left(\{x\in X:\ g(x)\in Nf(x)\}\right)>1-\eps\right\rbrace,\]
where $\eps>0$ and $N\subset G$ is a neighborhood of the identity. For every neighborhood of the identity $N\subset G$, Proposition \ref{prop:ess} implies that the set of good couples for $f$ is not empty. For $\eps>0$, Proposition \ref{prop:zornification} tells us that there is a maximal good couple $(A,g)$. The maximality of the couple and Proposition \ref{prop:bitbig} imply that $\mu(A)\geq 1-\eps$. So by Lemma \ref{lem: ext countable range} there is $g'\in \widetilde{[\mathcal R_G]}_D$ such that $g'\in \mathcal U_{\eps,N}$.

\section{Topological rank of orbit full groups}

We now use Theorem \ref{thm:densesub} to show that the topological rank of orbit full groups associated to free measure-preserving actions of unimodular locally compact groups is equal to two.

\begin{thm}\label{thm:toporank g delta}
Let $G$ be a locally compact unimodular non-discrete and non-compact Polish group. For every measure-preserving, essentially free and ergodic action of $G$, there is a dense $G_\delta$ of couples $(T,U)$ in $[\mathcal R_G]^2$ which generate a dense free subgroup of $[\mathcal R_G]$ acting freely. In particular, the topological rank of $[\mathcal R_G]$ is $2$. 
\end{thm}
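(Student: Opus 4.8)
The plan is to reduce the statement to a countable, cost-one equivalence relation via the product decomposition of Theorem \ref{thm:OEprod}, import the genericity statement for full groups of cost-one relations from \cite{gentopergo}, and transfer it across the dense inclusion furnished by Theorem \ref{thm:densesub}. Concretely, I would first use Theorem \ref{thm:OEprod} to fix an orbit equivalence $\Theta$ between $\RR_G$ and $\RR_{\circle\times\Gamma}$, where $\Gamma$ is a countable group acting on $(Y,\nu)$ and $\circle$ acts on itself by translation. Since $\Theta$ is measure-preserving, it induces a topological group isomorphism $[\RR_{\circle\times\Gamma}]\cong[\RR_G]$, so it suffices to treat $H:=\circle\times\Gamma$. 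Picking an irrational rotation $r$, the countable subgroup $\langle r\rangle\times\Gamma\cong\Z\times\Gamma$ is dense in $\circle\times\Gamma$, so Theorem \ref{thm:densesub} yields that $[\RR_{\Z\times\Gamma}]$ is dense in $[\RR_{\circle\times\Gamma}]$ for the topology of convergence in measure.

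Next comes the cost computation. Here $\RR_{\Z\times\Gamma}=\RR_{\langle r\rangle}\times\RR_\Gamma$ is the product of the ergodic hyperfinite relation generated by $r$ with $\RR_\Gamma$. This product is ergodic (it is orbit equivalent to the ergodic $G$-action, and ergodicity of a product forces ergodicity of each factor) and both factors have infinite classes (for $\RR_\Gamma$ this is item (3) in the proof of Theorem \ref{thm:OEprod}, valid since $G$ is non-compact). By Gaboriau's theorem that a product of two aperiodic ergodic probability measure-preserving equivalence relations has cost one, $\Cost(\RR_{\Z\times\Gamma})=1$. Consequently the results of \cite{gentopergo} provide a dense $G_\delta$ subset $\mathcal G_0$ of $[\RR_{\Z\times\Gamma}]^2$, \emph{for the uniform topology $d_u$}, consisting of pairs $(T,U)$ that generate a subgroup which is $d_u$-dense in $[\RR_{\Z\times\Gamma}]$, free, and acts freely on $(X,\mu)$.

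Finally I would transfer and close up. Because $d_u$ refines convergence in measure and $[\RR_{\Z\times\Gamma}]$ is dense in $[\RR_G]$, the set $\mathcal G_0$ is dense in $[\RR_G]^2$, and each of its pairs generates a subgroup that is dense in $[\RR_G]$, free, and acts freely. Let $\mathcal G\subseteq[\RR_G]^2$ be the set of all pairs with these three properties; I claim it is a $G_\delta$. Indeed, the pairs generating a dense subgroup form a $G_\delta$ in any Polish group, and requiring that $w(T,U)$ have no fixed point for every nontrivial reduced word $w$ is the countable intersection of the sets $\{\mu(\mathrm{Fix}\,w(T,U))=0\}$, each a $G_\delta$ (using that $\{T:\mu(\mathrm{Fix}\,T)=0\}$ is $G_\delta$ in $\Aut(X,\mu)$, that the inclusion $[\RR_G]\into\Aut(X,\mu)$ is continuous, and that word maps are continuous). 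This free-action condition automatically forces $w(T,U)\neq\id$, hence freeness of the group. Since $\mathcal G\supseteq\mathcal G_0$ is dense, $\mathcal G$ is a dense $G_\delta$; in particular it is nonempty, so the topological rank is at most $2$, and it is exactly $2$ because the closure of a cyclic subgroup is abelian whereas $[\RR_G]$ is not.

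The main obstacle I anticipate is the mismatch of topologies: the genericity statement of \cite{gentopergo} lives in the uniform topology of the countable full group, whereas the desired conclusion concerns the strictly coarser convergence-in-measure topology on the larger group $[\RR_G]$. The crux is therefore to verify that both density and the $G_\delta$ conditions survive the passage from $([\RR_{\Z\times\Gamma}],d_u)$ to $([\RR_G],\text{convergence in measure})$; this is precisely where Theorem \ref{thm:densesub} and the continuity of $[\RR_G]\into\Aut(X,\mu)$ are used. The cost-one input, via the product decomposition and Gaboriau's formula, is the other essential ingredient, as it is what makes $2$ (rather than a larger integer) the correct rank.
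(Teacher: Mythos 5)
Your overall strategy --- the product decomposition of Theorem \ref{thm:OEprod}, the dense copy of $\Z\times\Gamma$ via an irrational rotation, the cost-one computation, and the transfer through Theorem \ref{thm:densesub} --- is the same as the paper's, and your closing observations (the good set is $G_\delta$ in $[\RR_G]^2$, a free action forces a free group, the rank is exactly $2$) are correct. But the key input you feed into this machine is false, not merely misattributed: there cannot exist a dense $G_\delta$ set $\mathcal G_0$ of pairs in $[\RR_{\Z\times\Gamma}]^2$, \emph{for the uniform topology}, generating dense free subgroups acting freely. Indeed, if $\la T,U\ra$ acts freely then $T$ is aperiodic, so $d_u(T,\id)=1$; more generally, every element of $\la T,U\ra$ is supported in $\supp T\cup\supp U$, and this persists under $d_u$-limits, so $d_u$-density of $\la T,U\ra$ in $[\RR_{\Z\times\Gamma}]$ already forces $\mu(\supp T\cup \supp U)=1$. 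Either way, no pair of your set $\mathcal G_0$ can lie in the nonempty open set $\left\lbrace (T,U): d_u(T,\id)+d_u(U,\id)<1\right\rbrace$, so $\mathcal G_0$ cannot be $d_u$-dense in $[\RR_{\Z\times\Gamma}]^2$, and your transfer step has nothing to transfer. What is actually available for cost-one relations (and what the paper uses, via Theorem 1.7 of \cite{lm14nonerg}) is much weaker and one-sided: a \emph{single} aperiodic $T\in[\RR_{\Z\times\Gamma}]$ such that $\left\lbrace U: \overline{\la T,U\ra}^{d_u}=[\RR_{\Z\times\Gamma}]\text{ and }\la T,U\ra\cong\free_2\right\rbrace$ is $d_u$-dense --- density in the $U$-coordinate only, which is consistent with the support obstruction because $U$ may have arbitrarily small support while the fixed $T$ carries the aperiodicity.

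With only this one-sided input, your argument yields density of good pairs in $\{T\}\times[\RR_G]$, not in $[\RR_G]^2$. The missing ingredient --- and the actual place where the passage to the coarser topology of convergence in measure is exploited --- is Theorem 4.4 of \cite{Carderi:2014qr}: the conjugacy class of an aperiodic element is dense in $[\RR_G]$ for convergence in measure (precisely the statement that fails for $d_u$, by the same support obstruction). Given a target pair $(V,W)$, one first picks $S\in[\RR_G]$ with $STS\inv$ close to $V$; since the set of good partners $U$ is $d_u$-dense in $[\RR_{\Z\times\Gamma}]$, hence dense in $[\RR_G]$ for convergence in measure, and conjugation by $S$ is a homeomorphism of $[\RR_G]$, one then picks a good $U$ with $SUS\inv$ close to $W$; the pair $(STS\inv, SUS\inv)$ still generates a dense free subgroup. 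Without this conjugation argument your proof does not go through, and no strengthening of the $d_u$-genericity statement can replace it, because the statement you would need is impossible.
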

\begin{proof}
Let $G$ be a locally compact unimodular, non-discrete and non-compact Polish group. Suppose that $G$ acts on the probability space $(X,\mu)$ preserving the measure, essentially freely and ergodically. Let us denote by $\free_2$ the free group on two generators and observe that \[\left\lbrace (T,U)\in [\mathcal R_G]^2:\ \overline{\la T,U\ra}=[\mathcal R_{G}]\text{ and }\la T,U\ra\cong \free_2\right\rbrace\] is a $G_\delta$, so we have only to prove that it is dense. 

By Theorem \ref{thm:OEprod}, there exists a (not necessarily free) action of a countable group $\Gamma$ on a measure space $(Y,\nu)$, such that $\mathcal R_G$ is orbit equivalent to  the product action of $\mathbb S^1\times \Gamma$ on $\mathbb S^1\times Y$. Fix a copy of $\Z$ in $\mathbb S^1$ generated by an irrational rotation; then $\Z\times \Gamma$ is dense in $\mathbb S^1\times \Gamma$. By Theorem \ref{thm:densesub}, we have that $[\mathcal R_{\Z\times \Gamma}]$ is dense in $[\mathcal R_G]$. 

The equivalence relation $\mathcal R_{\Z\times \Gamma}$ has cost $1$, by Proposition $V\!I.23$ of \cite{MR1728876} (note that the proof only uses that $\Gamma_1$ acts freely). So we can apply Theorem 1.7 in \cite{lm14nonerg} to get the existence of an aperiodic $T\in [\mathcal R_{\Z\times \Gamma}]$ such that 
\[\left\lbrace U\in [\mathcal R_{\Z\times \Gamma}]: \overline{\la T,U\ra}^{d_u}=[\mathcal R_{\Z\times \Gamma}]\text{ and }\la T,U\ra\cong \free_2\right\rbrace\subset [\mathcal R_{\Z\times \Gamma}]\]
 is a dense subset of $[\mathcal R_{\Z\times\Gamma}]$ with respect to the uniform topology. This concludes the proof since by Theorem 4.4 of \cite{Carderi:2014qr}, the conjugacy class of $T$  is dense in $[\mathcal R_G]$ for the topology of convergence in measure.\end{proof}

\section{The orbit full group as a unitary group}\label{sec:full group unitary group}

In this section, we study the relationship between orbit full groups arising from measure-preserving free actions of locally compact groups and the associated von Neumann algebra. Throughout this section, $G$ will be a locally compact, second-countable unimodular group which we equip with a left and right invariant Haar measure $m$. 

Let us recall the crossed product construction. See the first chapter of \cite{MR500089} for more about this. Note however that our left von Neumann algebra is the right von Neumann algebra in Van Daele's book.  

\begin{df}Let $G$ be a locally compact Polish group.
  For a measure-preserving free action of $G$ on the probability space $(X,\mu)$, the \textbf{crossed product} $\LL^\infty(X,\mu) \rtimes G$ is the von Neumann algebra on $\LL^2(G\times X, m\times \mu)$ generated by 
\begin{itemize} \item the set of unitary operators $\{\lambda_h\times \kappa_h\}_{h\in G}$ where $h\mapsto \kappa_h$ is the Koopman representation of $G$ on $\LL^2(X,\mu)$ and $h\mapsto \lambda_h$ is the left regular representation,
\item the abelian algebra $\LL^\infty(X,\mu)$ which acts on  functions $\xi\in\LL^2(G\times X,\mu)$ by multiplication: for all $f\in\LL^\infty(X,\mu)$, we let $f\xi(g,x)=f(x)\xi(g,x)$.
\end{itemize} 
\end{df}

We will show that this von Neumann algebra is generated by the orbit full group $[\mathcal R_G]$, which can be seen as a unitary group as follows. Recall that since we assume that the action of $G$ is essentially free, the full group $[\mathcal R_G]$ is isomorphic as Polish group to $\widetilde{[\mathcal R_G]}$, as explained in Section \ref{sectfull}.

\begin{df}
Let the full group $\widetilde{[\mathcal R_G]}$ almost-act on $(G\times X, m\times\mu)$ by
  \[g\cdot (h,x)=(g(x) h, g(x) x)\text{ for all } g\in \widetilde{[\mathcal R_G]},\ h\in G, x\in X \]
  and denote by $\pi$ the associated Kooman representation on $\LL^2(G\times X)$.
  That is for every $f\in\LL^2(G\times X)$ and $g\in\widetilde{[\mathcal R_G]}$, we have
  \[\pi(g)\cdot f(h,x)=f(g(x)\inv h, g(x)\inv x).\]
\end{df}

\begin{prop}\label{prop:full group gen vna}
  Let $G$ be a unimodular non-compact, locally compact Polish group acting freely on $(X,\mu)$.  
  \begin{enumerate}[(1)]
  \item The map $\pi$ is a continuous embedding of $[\mathcal R_G]$ into $\mathcal U(M)$.
  \item The full group of $\mathcal R_G$ consists of the intersection of $\Aut(G\times X, m\times \mu)$ with $\mathcal U(G\ltimes\LL^\infty(X))$, seeing both as subgroups of $\mathcal U(\LL^2(G\times X,m\times\mu))$. In particular, it is a closed subgroup of $\mathcal U(\LL^2(G\times X,m\times\mu))$.
  \item The full group generates the von Neumann algebra, that is $\pi([\mathcal R_G])''=\LL^\infty(X,\mu)\rtimes G$.
  \end{enumerate}
\end{prop}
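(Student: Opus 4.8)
The plan is to establish the three items in order, since each relies on the previous one. For item (1), I would first verify that $\pi$ genuinely takes values in $\mathcal U(M)$ where $M = \LL^\infty(X,\mu)\rtimes G$. The almost-action of $\widetilde{[\mathcal R_G]}$ on $(G\times X, m\times\mu)$ is measure-preserving precisely because $G$ is unimodular: the map $(h,x)\mapsto (g(x)h, g(x)x)$ preserves $m$ in the first coordinate by left-invariance of $m$ and preserves $\mu$ in the second by the orbit full group condition, so $\pi(g)$ is a well-defined unitary on $\LL^2(G\times X, m\times\mu)$. To see that $\pi(g)$ lands in $M$ rather than merely in $\mathcal U(\B(\LL^2))$, I would decompose a generic $g\in\widetilde{[\mathcal R_G]}_D$ with countable image $\{h_n\}$: on the piece $A_n := g^{-1}(h_n)$ the operator $\pi(g)$ acts as $(\lambda_{h_n}\times\kappa_{h_n})$ pre-multiplied by the projection $\mathbf 1_{A_n}\in\LL^\infty(X)$, so $\pi(g) = \sum_n \mathbf 1_{A_n}(\lambda_{h_n}\times\kappa_{h_n})$ is visibly a strong limit of elements of $M$. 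Since $\widetilde{[\mathcal R_G]}_D$ is dense in $\widetilde{[\mathcal R_G]}$ by Theorem \ref{thm:fmlsui}, and $\pi$ is continuous (convergence in measure on the full group implies strong convergence of the Koopman operators, a routine $\varepsilon$-argument splitting on where the functions agree), the image of all of $[\mathcal R_G]$ lands in the strong closure $M$. Injectivity is clear since $\pi(g) = \id$ forces $g(x)x = x$ a.e., whence $g(x)=e$ by freeness.

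For item (2), the inclusion $[\mathcal R_G]\subseteq \mathcal U(M)\cap\Aut(G\times X)$ follows from item (1) together with the fact that each $\pi(g)$ normalizes the relevant structure and arises from a measure-preserving transformation. The content is the reverse inclusion: an element of $\Aut(G\times X, m\times\mu)$ whose Koopman operator lies in $\mathcal U(M)$ must come from $[\mathcal R_G]$. Here I would argue that such an automorphism must preserve the orbit equivalence relation $\mathcal R_{G\times X}$ coming from the $G$-action (identifying $G\times X$ with $\mathcal R_G$ via $(g,x)\mapsto(g,gx)$ as in Theorem \ref{thm:OEprod}), because membership in $M = \LL^\infty(X)\rtimes G$ forces the automorphism to respect the $\LL^\infty(X)$-module structure and the grading by $G$. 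Concretely, a unitary in $M$ implementing a measure-preserving automorphism must send the Cartan-type subalgebra to itself in the right way; translating this back through the identification yields that the transformation is of the form $(h,x)\mapsto(g(x)h, g(x)x)$ for some $g$, i.e. it is $\pi$ of an element of $[\mathcal R_G]$. Closedness is then immediate since $[\mathcal R_G]$ is an intersection of two closed subgroups of $\mathcal U(\LL^2)$.

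For item (3), I would show $\pi([\mathcal R_G])'' = M$ by double inclusion. The inclusion $\subseteq$ is item (1). For $\supseteq$, it suffices to generate the two sets of generators of $M$: the copy of $\LL^\infty(X,\mu)$ and the unitaries $\lambda_h\times\kappa_h$ for $h\in G$. To recover $\LL^\infty(X)$, note that the characteristic functions appearing in the decomposition $\pi(g)=\sum_n \mathbf 1_{A_n}(\lambda_{h_n}\times\kappa_{h_n})$ can be isolated: taking $g$ supported on a set $A$ with value $e$ there and $e$ off it (i.e.\ $g$ built from a nontrivial element on $A$ and the identity elsewhere) and multiplying two such full-group elements lets one extract projections $\mathbf 1_A$, so $\LL^\infty(X)\subseteq\pi([\mathcal R_G])''$. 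To recover the generators $\lambda_h\times\kappa_h$, I would use the density supplied by the full group: for a fixed $h\in G$, approximating the constant function $x\mapsto h$ by elements $g$ of $\widetilde{[\mathcal R_G]}_D$ (possible since constant functions lie in $\widetilde{[\mathcal R_G]}$ and $\widetilde{[\mathcal R_G]}_D$ is dense) gives $\pi(g)\to\lambda_h\times\kappa_h$ strongly, putting $\lambda_h\times\kappa_h$ in the von Neumann algebra generated by the image.

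The main obstacle I anticipate is the reverse inclusion in item (2): showing that \emph{every} measure-preserving automorphism of $G\times X$ whose Koopman operator happens to lie in $M$ actually comes from the orbit full group. This is where one must genuinely use the structure of the crossed product — the interplay between the abelian subalgebra $\LL^\infty(X)$ and the $G$-grading — rather than soft density or continuity arguments, and it is presumably the point for which the authors thank Sven Raum for the discrete-case argument.
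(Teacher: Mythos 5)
Your item (1) is correct, and it takes a genuinely different route from the paper: you write each countable-image element as the strongly convergent sum $\sum_n \chi_{A_n}(\lambda_{h_n}\times\kappa_{h_n})$ of elements of $M$ and then invoke density of $\widetilde{[\mathcal R_G]}_D$ (Theorem \ref{thm:fmlsui}) plus continuity of $\pi$, whereas the paper checks directly that $\pi(g)$ commutes with the generators $\tilde f$ and $1\times\rho_h$ of the commutant $M'$ (\cite[Thm. 3.12]{MR500089}) and concludes by the bicommutant theorem. Both work; yours trades knowledge of $M'$ for the density theorem.

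The genuine problems are in items (2) and (3). For (2) you give no argument: ``membership in $M$ forces the automorphism to respect the $\LL^\infty(X)$-module structure and the grading by $G$'' is a restatement of the goal, and you explicitly defer it as the main obstacle. In fact this step is a short concrete computation once one uses the same commutant description as in (1): if $T\in\Aut(G\times X,m\times\mu)$ commutes with every $1\times\rho_h$, then writing $T(g,x)=(t_1(g,x),t_2(g,x))$ one gets $t_1(gh,x)=t_1(g,x)h$ and that $t_2$ does not depend on $g$, so $T(h,x)=(g(x)h,t(x))$; measure-preservation of $T$ forces $t$ to preserve $\mu$, and commutation with the operators $\tilde f$ forces $t(x)=g(x)x$, i.e. $T$ is in the image of the full group. (Incidentally, the acknowledgement to Sven Raum concerns item (3), not item (2).)

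For (3), the step recovering $\LL^\infty(X)$ is where your proposal actually breaks. First, a map equal to a nontrivial $h\in G$ on $A$ and to $e$ off $A$ is a bijection of $X$, hence an element of the full group, only when $\mu(hA\bigtriangleup A)=0$; for a weakly mixing flow no such pair $(h,A)$ with $0<\mu(A)<1$ exists, so the elements you propose to multiply are generally not available. Second, even granting full group elements that are the identity off a set $B$ and nontrivial on $B$ (e.g. involutions swapping two halves of $B$), no finite product of unitaries is a proper projection, and the spectral projection onto $\ker(\pi(\sigma)-1)$ for an involution $\sigma$ is strictly larger than the subspace of vectors supported over the complement of $B$, because of the vectors over $B$ that are symmetric under the swap. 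Reaching $\chi_A$ requires a limiting mechanism that kills these unwanted invariant vectors: the paper reduces via Theorem \ref{thm:OEprod} to a product $\circle\times Y$ with a countable group $\Gamma$ acting on $Y$, picks an aperiodic $T\in[\tilde\RR_\Gamma]$ (Dye), and shows that the powers $T_{X\setminus A}^n$ of the first-return map converge \emph{weakly} to $\chi_A$ on $\LL^2(\mathcal R_G)$ --- aperiodicity is exactly what makes the off-diagonal terms vanish in the limit. This weak-limit-of-powers argument is the missing idea. Your recovery of the unitaries $\lambda_h\times\kappa_h$, by contrast, is fine but over-engineered: the constant function $x\mapsto h$ already lies in $\widetilde{[\mathcal R_G]}$ and $\pi$ of it equals $\lambda_h\times\kappa_h$, so no approximation is needed.
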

\begin{proof}
  $(1)$ Firstly, observe that the action of $\widetilde{[\mathcal R_G]}$ on $G\times X$ is measure preserving, so that $\pi$ is a unitary representation. 

  Let us now see why $\pi(G)\subseteq \mathcal U(M)$. For this, note that the commutant of $M=\LL^\infty(X)\rtimes G$ is generated by the operators $\tilde f$ for $f\in \LL^\infty(X)'=\LL^\infty(X)$ (acting by $\tilde (\tilde f \xi)(h,x)=f(h\inv x)\xi(h,x))$) and the operators $1\times \rho_g$ where $\rho_g$ is the right regular representation, see \cite[Thm. 3.12]{MR500089}. 
For $g\in[\mathcal R_G]$, we have
\begin{align*}
\pi(g)\tilde f \xi(h,x)&=(\tilde f \xi)(g(x)\inv h,g(x)\inv x)\\
&=f(h\inv x) \xi(g(x)\inv h,g(x)\inv x)\\
&=\tilde f \pi(g) \xi(h,x)
\end{align*}
and we also have for $g'\in G$
\begin{align*}
\pi(g)\rho_{h'} \xi(h,x)&= (\rho_{h'}\xi)(g(x)\inv h,g(x)\inv x)\\
&=\xi(g(x)\inv hh', g(x)\inv x)\\
&=\rho_{h'}\pi(g) \xi(h,x),
\end{align*}
which concludes the proof.

\vspace{0.2cm}

$(2)$ We will prove that every automorphism $T\in \Aut(G\times X)$ which commutes with the operators $\tilde f$ and $\rho_h$ is in the image of the full group.

Fix such a $T$ and put $T(g,x)=(t_1(g,x), t_2(g,x))$. Since $T$ commutes with $\rho_h$, we obtain that
\begin{align*}
 T\rho_h(g,x)&=(t_1(gh,x),t_2(gh,x)\\
&=(t_1(g,x)h,t_2(g,x))\\
\end{align*}
so that $t_2(g,x)$ only depends on $x$, and $t_1(gh,x)=t_1(g,x)h$. If we set $g(x):=t_1(1,x)$ and $t(x):=t_2(1,x)$, then we have $T(h,x)=(g(x)h,t(x))$. Now we observe that since $T$ preserves the measure, $T:X\to X$ also does:
\begin{align*}
m\times\mu(T^{-1}(A\times B))&=m\times\mu(\{(h,x): g(x)h\in A \text{ and } t(x)\in B)\\
&=m\times\mu(\{(h,x): h\in g(x)\inv A \text{ and } x\in t\inv(B))\\
&=\int_X m(g(x)\inv A)\chi_{t\inv(B)}(x)d\mu(x)\\
&=m(A)\int_X\chi_{ t\inv(B)}(x)d\mu(x)\\
&=m(A)\mu( t\inv(B))
\end{align*}

Finally, we exploit the hypothesis that $T$ commutes with the operators $\tilde f$,
\begin{align*}
T^{-1}\tilde f\xi(h,x)=&f(h\inv g(x)^{-1} t(x))\xi(g(x) h,t(x))\\
\tilde f T^{-1}\xi(h,x)=&f(h\inv x)\xi(g(x) h, t(x)).
\end{align*}

Since this is true for every $f$ and $\xi$, we must have that $g(x)x=t(x)$ and hence $T$ is in the image of the full group. 

\vspace{0.2cm}

$(3)$  Since $\lambda\times \kappa(G)$ is already a subgroup of $\pi([\mathcal R_G])$, by definition of the crossed product it suffices to show that $\pi([\mathcal R_G])''$ contains $\LL^\infty(X)$ . For this, it is enough to show that for every $A\subset X$ the characteristic function $\chi_{A}$ belongs to $\pi([\mathcal R_G])''$.

By Theorem \ref{thm:OEprod}, we may assume that $X=\mathbb S_1\times Y$, and that $\RR_G=\RR_{\mathbb S_1\times \Gamma}$, where $\circle\times \Gamma$  acts via a product action. Since $G$ is non compact, $\Gamma$ has infinite orbits, but recall that the action is not necessarily free. 

Let $\RR_\Gamma$ the equivalence relation of the action of $\Gamma$ on $Y$ and $\tilde\RR_\Gamma$ the equivalence relation of the action of $\Gamma$ on $\circle\times Y$ obtained by making $\Gamma$ act trivially on $\circle$. Observe that $\tilde\RR_\Gamma=\RR_\Gamma\times\circle$ as measure spaces and by Theorem \ref{thm:OEprod} (ii), we have a measure preserving isomorphism between $(G\times X,m\times \mu)$ and $(\RR_\Gamma\times\circle\times\circle,\tilde\nu\times L\times L)$. 

By a well-known result of Dye (see e.g. \cite[Thm. 3.5]{MR2583950}), we can choose an aperiodic element $T\in [\tilde\RR_\Gamma]\subset[\RR_{\Gamma\times\circle}]$. Now let $T_{X\setminus A}$ be the first return map induced by $T$ on $X\setminus A$. It is easy to check that the sequence $(T_{X\setminus A}^n)_{n\in\N}$ tends to $\chi_A$ weakly as operators on the Hilbert space $\LL^2( \tilde \RR_\Gamma)$. Since $\mathcal R_{\Gamma\times\circle}=\RR_\Gamma\times \circle\times \circle$ and $\tilde\RR_{\Gamma}=\RR_\Gamma\times\circle$, we deduce that $(T_A^n)_{n\in\N}$ tends to $\chi_A$ weakly as operators on the Hilbert space $\LL^2( \RR_{\Gamma\times\circle})\cong \LL^2(\RR_G)$. Therefore the sequence $(T_{X\setminus A}^n)_{n\in \N}$ tends to $\chi_A$ weakly in $\pi([\RR_G])''$.  
\end{proof}

\section{Extreme amenability of orbit full groups}

Let us recall that a Polish group is \textbf{extremely amenable} if whenever it acts continuously on a compact space, the action has a fixed point. It is \textbf{amenable} if whenever it acts continuously by affine transformations on a compact subset of a locally convex topological vector space, then the action has a fixed point.

The aim of this section is to extend Theorem 5.7 of Giordano and Pestov \cite{MR2311665} to the locally compact setting.

\begin{thm}\label{thm:amenable extreame}
  Let $G$ be a locally compact, non-compact unimodular Polish group. Suppose that $G$ acts freely on the probability space $(X,\mu)$ preserving the probability measure. Then  the following are equivalent.\begin{enumerate}[(i)]
\item $G$ is amenable.
\item $[\mathcal R_G]$ is amenable.
\item $[\mathcal R_G]$ is extremely amenable.
\end{enumerate}
\end{thm}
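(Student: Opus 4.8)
The plan is to prove the three implications $(i)\Rightarrow(iii)\Rightarrow(ii)\Rightarrow(i)$, so that all three statements become equivalent. The implication $(iii)\Rightarrow(ii)$ is immediate, since every extremely amenable group is amenable: a continuous affine action on a compact convex set is in particular a continuous action on a compact space, so the fixed point furnished by extreme amenability is the required fixed point for amenability. The real content lies in the two remaining implications, which correspond precisely to the two directions announced in the introduction for Theorem \ref{thmi:extr}.

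For the direction $(i)\Rightarrow(iii)$, I would follow the strategy of Giordano and Pestov, adapted to the locally compact setting via cross-sections. Using Theorem \ref{thm:OEprod}, the action of the amenable group $G$ is orbit equivalent to a product action of $\circle\times\Gamma$ on $(\circle\times Y,L\times\nu)$, and by item $(iii)$ of that theorem the amenability of $G$ is equivalent to the amenability of the equivalence relation $\RR_\Gamma$ induced by $\Gamma$ on $(Y,\nu)$. The heart of the Giordano--Pestov argument is that the full group of an \emph{amenable} ergodic measure-preserving equivalence relation is extremely amenable, established through a L\'evy group / concentration of measure argument on the symmetric groups $\mathfrak S_n$ approximating the full group. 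The task is to transport this to $[\RR_G]$: I would use Theorem \ref{thmi:density} (Theorem \ref{thm:densesub}) to reduce density questions to a countable dense subgroup, exploit the product decomposition $\RR_G\cong\RR_\circle\times\RR_\Gamma$, and show that $[\RR_G]$ is an increasing union (with dense union) of extremely amenable subgroups, using that extreme amenability passes to closures of directed unions of extremely amenable subgroups.

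For the direction $(ii)\Rightarrow(i)$, I would argue by contraposition through von Neumann algebras, which is the path flagged in the introduction. By Proposition \ref{prop:full group gen vna}(3), the full group $[\RR_G]$ generates the crossed product $\LL^\infty(X,\mu)\rtimes G$ as a von Neumann algebra, and by part (1) it embeds continuously as a subgroup of the unitary group $\mathcal U(M)$ of $M=\LL^\infty(X)\rtimes G$. The key general fact is that if a Polish group is amenable and it generates a von Neumann algebra $M$ while sitting inside $\mathcal U(M)$ (with a suitable strong-topology compatibility), then $M$ admits a hypertrace, i.e. $M$ is an amenable (injective) von Neumann algebra; concretely, an invariant mean on the $[\RR_G]$-action on an appropriate space of states yields a conditional-expectation-type functional witnessing injectivity of $M$. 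Once $M=\LL^\infty(X)\rtimes G$ is amenable, Connes' theorem together with the fact that amenability of the crossed product forces amenability of the acting group (for free actions, the group embeds via the canonical unitaries and amenability descends) gives that $G$ is amenable.

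The main obstacle I anticipate is the implication $(ii)\Rightarrow(i)$, specifically the step deducing amenability of the von Neumann algebra $M$ from topological amenability of the Polish group $[\RR_G]\subset\mathcal U(M)$. This requires a careful argument producing a hypertrace on $M$ from an invariant mean associated to the amenable action of $[\RR_G]$, and one must check that the strong/weak topologies are compatible enough that the generated von Neumann algebra inherits injectivity---this is exactly the point the authors thank Sven Raum for in the discrete case, so transporting it to the locally compact non-discrete setting via the product decomposition of Theorem \ref{thm:OEprod} will demand the most care. The $(i)\Rightarrow(iii)$ direction, while technically involved, is a comparatively routine adaptation of known L\'evy-group techniques using cross-sections and the density Theorem \ref{thmi:density}.
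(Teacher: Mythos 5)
Your proposal is correct and follows essentially the same route as the paper: for $(i)\Rightarrow(iii)$ the paper likewise combines the cross-section decomposition of Theorem \ref{thm:OEprod} with the density theorems to exhibit $[\RR_G]$ as the closure of an increasing union of extremely amenable subgroups (its specific building blocks being the full groups of the compact groups $K_n=(\bigoplus_{k\leq n}\Z/2\Z)\times\circle$ obtained after applying Connes--Feldman--Weiss, handled by Lemma \ref{lem: full groups for compact groups are extremely amenable}, rather than your invocation of Giordano--Pestov for amenable countable relations in the uniform topology), and for $(ii)\Rightarrow(i)$ the paper implements exactly the de la Harpe-style argument you sketch, using Proposition \ref{prop:full group gen vna}(3) together with a fixed point in the weakly compact convex hull of $\{uxu^*\}$ to verify Schwartz's property (P), hence injectivity of $\LL^\infty(X,\mu)\rtimes G$, hence amenability of $G$ by Theorem \ref{thm: injective crossed product and amenability}. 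The one step you flag as most delicate, producing injectivity from amenability of $[\RR_G]\subseteq\mathcal U(M)$, is in fact short once Proposition \ref{prop:full group gen vna}(3) is in hand (and that proposition, not the hypertrace step, is what the authors credit to Raum).
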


Before we prove the theorem, let us recall the following useful well-known result which follows from Remark 5.3.29(2) and Corollary 6.2.12 of \cite{MR1799683}.

\begin{thm}\label{thm: injective crossed product and amenability}
    Let $G$ be a locally compact, non-compact unimodular Polish group. Suppose that $G$ acts freely on the probability space $(X,\mu)$ preserving the probability measure. Then $G$ is amenable if and only if the crossed product $\LL^\infty(X,\mu)\rtimes G$ is injective. 
\end{thm}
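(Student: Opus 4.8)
The plan is to derive the equivalence from the product decomposition of Theorem \ref{thm:OEprod}, running it as a chain of equivalences rather than proving the two implications separately. We may first reduce to $G$ non-discrete: when $G$ is an infinite discrete group the statement is classical, since for a free probability measure-preserving action $\LL^\infty(X,\mu)\rtimes G$ is the von Neumann algebra $L(\mathcal R_G)$ of the orbit relation, whose injectivity is equivalent to hyperfiniteness, hence amenability, of $\mathcal R_G$ by Connes--Feldman--Weiss, and a free amenable p.m.p. relation forces the group to be amenable by Zimmer.

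For $G$ non-discrete, the first point is that, the action being free, the crossed product $M=\LL^\infty(X,\mu)\rtimes G$ is isomorphic to the von Neumann algebra $L(\mathcal R_G)$ of the orbit equivalence relation equipped with the Haar system coming from the cross-section; in particular $M$ depends, up to isomorphism, only on $\mathcal R_G$ up to measure-matching orbit equivalence. This is where Theorem \ref{thm:OEprod}(ii) is used, as it provides an orbit equivalence $\Theta$ which is measure-preserving at the level of the relations, so that it induces an isomorphism of the corresponding von Neumann algebras. Applying Theorem \ref{thm:OEprod}(i) we therefore get
\[M\;\cong\;L\big(\mathcal R_{\circle}\times\mathcal R_\Gamma\big)\;\cong\;L(\mathcal R_{\circle})\,\overline{\otimes}\,L(\mathcal R_\Gamma),\]
where $\mathcal R_{\circle}$ is the transitive relation on $(\circle,L)$ and $\mathcal R_\Gamma$ is the orbit relation of the countable group $\Gamma$ on $(Y,\nu)$.

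Now $\mathcal R_{\circle}$ is the full relation $\circle\times\circle$, so $L(\mathcal R_{\circle})\cong\B(\LL^2(\circle))$ is a type I factor, and in particular injective. Since a von Neumann tensor product with an injective factor is injective if and only if the other factor is, we obtain that $M$ is injective if and only if $L(\mathcal R_\Gamma)$ is injective. By Connes--Feldman--Weiss this happens exactly when the countable p.m.p. relation $\mathcal R_\Gamma$ is amenable, which by Theorem \ref{thm:OEprod}(iii) is in turn equivalent to the amenability of $G$. Chaining these equivalences proves the theorem. Alternatively, one can argue directly through the theory of amenable measured groupoids: $G$ amenable makes the transformation groupoid $G\ltimes X$ amenable, hence $M$ injective, while the converse is the measured-groupoid analogue of Connes' theorem together with the fact that an amenable groupoid carrying an invariant probability measure comes from an amenable group; this is the route taken in the cited \cite{MR1799683}.

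I expect the main obstacle to be the identification $M\cong L(\mathcal R_G)$ together with its orbit-equivalence invariance in the locally compact setting: one must check that the measure on $\mathcal R_G$ transported by $\Theta$ in Theorem \ref{thm:OEprod}(ii) is exactly the one for which the two groupoid von Neumann algebras correspond, so that the isomorphism of relations upgrades to an isomorphism of algebras. Once this bookkeeping of Haar systems is in place, the tensor splitting, the computation of $L(\mathcal R_{\circle})$, and the invocation of Connes--Feldman--Weiss are routine. In the textbook route, the analogous obstacle is instead the deep implication that injectivity of $M$ forces amenability of the groupoid.
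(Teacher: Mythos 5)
The first thing to note is that the paper contains no proof of this statement: it is recalled as a well-known result, quoted directly from Remark 5.3.29(2) and Corollary 6.2.12 of the monograph on amenable groupoids \cite{MR1799683}. That citation is precisely the ``alternative'' route you sketch in your last paragraph (for a probability-measure-preserving action, the measured transformation groupoid $G\ltimes X$ is amenable iff $G$ is, and the groupoid von Neumann algebra is injective iff the groupoid is amenable). Your main argument --- reduce to $G$ non-discrete, identify $\LL^\infty(X,\mu)\rtimes G$ with the von Neumann algebra of $\mathcal R_G$ equipped with its Haar system, transport it through the measure-preserving orbit equivalence of Theorem \ref{thm:OEprod}(ii), split off the type I factor $\B(\LL^2(\circle))$ coming from the transitive relation on the circle, and conclude by Connes--Feldman--Weiss together with Theorem \ref{thm:OEprod}(iii) --- is therefore a genuinely different derivation, assembled from the paper's own machinery instead of the general theory of amenable measured groupoids. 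What it buys is self-containedness; what it costs is the Haar-system bookkeeping you flag, which is real but of the same nature as the identification of $\LL^2(G\times X,m\times\mu)$ with $\LL^2(\mathcal R_\Gamma\times\circle\times\circle)$ that the paper itself carries out in the proof of Proposition \ref{prop:full group gen vna}(3).

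There is, however, a genuine gap in your main route: Theorem \ref{thm:OEprod} is stated and proved only for \emph{ergodic} actions, whereas the theorem you are proving assumes only a free probability-measure-preserving action, with no ergodicity hypothesis. For a non-ergodic action of a non-discrete group your chain of equivalences never gets started, since the decomposition $\mathcal R_G\cong\mathcal R_\circle\times\mathcal R_\Gamma$ is simply not available. The gap is repairable: disintegrate over the ergodic decomposition, observe that almost every ergodic component is again a free pmp action (freeness passes to components because the free part is a conull invariant Borel set, and components are non-atomic since free orbits of a non-compact group are uncountable), that $\LL^\infty(X,\mu)\rtimes G$ is the direct integral of the crossed products of the components, and that a direct integral of von Neumann algebras is injective iff almost every fiber is; your ergodic-case argument then applies fiberwise, and $G$ is amenable iff almost every fiber is injective. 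But none of this appears in your proposal, and it requires ergodic decomposition for locally compact group actions plus some direct-integral theory --- whereas the groupoid route cited by the paper needs no ergodicity at all, which is exactly what it buys here. (The omission is easy to make because the paper itself is loose on this point in Section 5, but as a standalone statement the theorem has no ergodicity assumption, so a complete proof must address it.)
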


We will also need the following lemma, which provides basic extremely amenable orbit full groups.

\begin{lem}\label{lem: full groups for compact groups are extremely amenable}Let $G$ be a compact metrisable group acting freely on a standard probability space $(X,\mu)$. Then the associated orbit full group is extremely amenable.
\end{lem}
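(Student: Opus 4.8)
The plan is to exploit that, since $G$ is compact and acts freely, the orbit equivalence relation $\RR_G$ is smooth and $[\RR_G]$ is really an $\LL^0$-space of automorphisms of a fixed probability space, after which extreme amenability follows from known results. \textbf{Step 1 (product decomposition).} A free action of a compact metrizable group is smooth, so the quotient $Z:=X/G$ is a standard probability space with the pushforward measure $\nu$, and there is a measurable $G$-equivariant identification $X\cong G\times Z$ under which the action becomes left translation on the first coordinate. Equivalently, disintegrating $\mu$ over the $\sigma$-algebra of $G$-invariant sets yields conditional measures that are supported on the orbits and $G$-invariant, hence equal to the normalized Haar measure $m_G$ of $G$ transported to each orbit; thus $\mu$ corresponds to $m_G\times\nu$, the orbit of $(h,z)$ is $G\times\{z\}$, and $\RR_G=\{((h,z),(h',z')):z=z'\}$.

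\textbf{Step 2 (identification with an $\LL^0$-group).} Since the action is free, each $T\in[\RR_G]$ preserves every fibre $G\times\{z\}$ and projects to the identity on $Z$, so by uniqueness of the disintegration it acts on almost every fibre by a Haar-preserving bijection $\psi^T_z\in\Aut(G,m_G)$, i.e. $T(h,z)=(\psi^T_z(h),z)$. The assignment $T\mapsto(z\mapsto\psi^T_z)$ is a bijection onto $\LL^0(Z,\nu;\Aut(G,m_G))$, and the direct computation $(S\circ T)(h,z)=(\psi^S_z(\psi^T_z(h)),z)$ shows $\psi^{S\circ T}_z=\psi^S_z\circ\psi^T_z$, so it is a group isomorphism. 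I would then verify that it is a homeomorphism, namely that the topology of convergence in measure on $[\RR_G]$ matches convergence in measure of the fields $z\mapsto\psi^T_z$ for a compatible metric on the weak topology of $\Aut(G,m_G)$. This gives $[\RR_G]\cong\LL^0(Z,\nu;\Aut(G,m_G))$ as Polish groups.

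\textbf{Step 3 (conclusion).} If $G$ is infinite then $(G,m_G)$ is a standard probability space, so $\Aut(G,m_G)\cong\Aut([0,1],\lambda)$, which is extremely amenable by \cite{MR1891002}; by a theorem of Pestov and Farah--Solecki, $\LL^0(Z,\nu;H)$ is extremely amenable whenever $H$ is, hence so is $[\RR_G]$. If $G$ is finite then $\Aut(G,m_G)$ is the finite symmetric group $\mathfrak S_{|G|}$, which is amenable and locally compact, and $\LL^0(Z,\nu;H)$ is again extremely amenable for $H$ amenable locally compact (Pestov). In either case $[\RR_G]$ is extremely amenable.

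The main obstacle is Step 2: one must establish rigorously both the measurability statements in the product decomposition (joint measurability of $z\mapsto\psi^T_z$ and the fact that $\psi^T_z$ is Haar-preserving for almost every $z$) and the precise matching of the convergence-in-measure topology on $[\RR_G]$ with the $\LL^0$-topology on the field of fibrewise automorphisms. Once the topological group isomorphism $[\RR_G]\cong\LL^0(Z,\nu;\Aut(G,m_G))$ is secured, the statement is a direct appeal to the cited extreme amenability results for $\LL^0$-groups.
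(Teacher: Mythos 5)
Your proposal is correct and follows essentially the same route as the paper: the paper likewise reduces to the identification $[\RR_G]\cong\LL^0(Y,\nu,\Aut(G,\lambda))$ (obtaining the decomposition $X\cong G\times Y$ via a Borel transversal rather than via smoothness of the orbit relation and disintegration of $\mu$), and then treats the same two cases, using Glasner's theorem on $\LL^0$-groups with amenable locally compact (here finite) target and Giordano--Pestov's extreme amenability of $\Aut(G,\lambda)$ together with the stability of extreme amenability under forming $\LL^0$. The topological matching you flag as the main remaining obstacle is also asserted without detailed proof in the paper, so your write-up is at the same level of rigour as the original.
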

\begin{proof}As a consequence of \cite[Thm. 3.2]{MR0159923}, we may view $X$ as a Borel $G$-invariant subspace of a compact continuous $G$-space $K$. By \cite[Prop. 3.4.6]{MR2455198}, there is a Borel transversal\footnote{A Borel transversal is a Borel subset which intersects every $G$-orbit at exactly one point.} for the $G$-action on $K$, in particular there is a Borel transversal $Y$ for the $G$-action on $X$. 

Let $\pi$ be the Borel map with takes every $x\in X$ to the only $y\in Y$ such that $y\in G\cdot x$, and equip $Y$ with the pushforward measure $\nu:=\pi_*\mu$. Let $\lambda$ be the Haar probability measure on $G$. By uniqueness of the Haar measure,  the Borel $G$-equivariant bijection
\begin{align*}\Phi: &(G\times Y, \lambda\times \nu)\to (X,\mu)\\
 & (g,y)\mapsto g\cdot y
\end{align*}
is measure-preserving. Moreover, under the identification of $X$ with $G\times Y$, the orbit full group becomes the group $\LL^0(Y,\nu,\Aut(G,\lambda))$ equipped with the topology of convergence in measure. We now have two cases to consider:
\begin{itemize}
\item $G$ is discrete hence finite, in which case $(Y,\nu)$ has to be non-atomic and $\Aut(G,\lambda)$ is a finite permutation group, in particular it is a compact group. Then by a result of Glasner (see \cite[Thm. 4.2.2]{MR2277969}), the group $\LL^0(Y,\nu, \Aut(G,\lambda))$ is extremely amenable.
\item $G$ is non-discrete, in which case $\Aut(G,\lambda)$ is extremely amenable by a result of Giordano and Pestov (see \cite[Thm. 4.5.15]{MR2277969}), which implies that $\LL^0(Y,\nu,\Aut(G,\lambda))$ also is.
\end{itemize}
In either case, we see that the orbit full group $\LL^0(Y,\nu,\Aut(G,\lambda))$ is extremely amenable as desired.
\end{proof}

%

\begin{proof}[Proof of Theorem \ref{thm:amenable extreame}]
Clearly  $(iii)\impl (ii)$, so we will only have to show that $(i)\impl(iii)$ and that $(ii)\impl(i)$.

   $(i)\impl(iii)$:  Suppose the group $G$ is amenable. By Theorem \ref{thm:OEprod}, we can assume that $X$ decomposes as a product $(Y\times \mathbb S^1,\nu\times\lambda)$, and that $\mathcal R_G=\mathcal R\times (\circle\times\circle)$ where $\mathcal R$ is a measure-preserving countable aperiodic amenable equivalence relation. By Connes-Feldman-Weiss' theorem \cite{MR662736}, we can actually assume that $\mathcal R=\mathcal R_\Gamma$ where $\Gamma:=\bigoplus_{n\in\N}\Z/2\Z$ is acting freeely on $(Y,\nu)$.
   
Let then $H:=\Gamma\times\mathbb S^1$, then we have a natural $H$-action on $Y\times \mathbb S^1$ which induces the same equivalence relation as $G$. We thus only have to show that $[\mathcal R_H]$ is extremely amenable. The group $H$ is naturally written as an increasing union of compact groups $K_n:=(\bigoplus_{k\leq n}\Z/2\Z)\times\circle$.

Note that the reunion $\cup_n\widetilde{[K_n]}_D$ is dense in $\widetilde{[H]}_D$. In fact given for every $f\in \widetilde{[H]}_D$ and every $\eps$, there exists $N>0$ such that \[A:=\{x\in X:\ f(x)\in K_N\}\] has measure bigger than $1-\eps$. Therefore by Lemma \ref{lem: ext countable range} we can extend $f_A$, the restriction of $f$ to $A$, to an element of $\widetilde{[K_N]}_D$ which is closed to $f$. 

By Theorem \ref{thm:fmlsui} the group $[H]_D$ is dense in $[\mathcal R_H]$, so the reunion $\cup_n[K_n]$ is dense in $[\mathcal R_H]$. Observe now that $\cup_n[K_n]_D\subseteq \bigcup_n[\mathcal R_{K_n}]$ and hence $\bigcup_n[\mathcal R_{K_n}]$ is dense in $[\mathcal R_H]$. Finally observe that the full groups $[\RR_{K_n}]$ are extremely amenable by Lemma \ref{lem: full groups for compact groups are extremely amenable}. So $[\RR_H]$ contains an increasing sequence of extremely amenable subgroups, whose union is dense, therefore $[\RR_H]=[\mathcal R_G]$ is extremely amenable. 


  \vspace{0.2cm}

$(ii)\impl(i)$:  Suppose the full group $[\mathcal R_G]$ is amenable. By Theorem \ref{thm: injective crossed product and amenability}, the amenability of $G$ is equivalent to the injectivity of the crossed product $\LL^\infty(X,\mu)\rtimes G$. Moreover by the celebrated result of Connes \cite[Thm. 6]{MR0454659}, the injectivity of a von Neumann algebra $M\subseteq \BH$ is equivalent to Schwartz's property (P), which means that whenever $x\in \BH$, the  closed convex hull of $(uxu^*)_{u\in\mathcal U(M)}$ intersects the commutant of $M$. It thus suffices to prove that $\LL^\infty(X)\rtimes G$ has Schwartz's property (P).

To this end, let $x\in \BH$. Then the convex closed hull $K$ of $(uxu^*)_{u\in\mathcal U(\LL^\infty(X,\mu)\rtimes G)}$ is a weakly compact convex set onto which $[\RR_G]$ acts continuously by conjugation. Since $[\RR_G]$ is amenable and this action is the restriction of a linear hence affine action, there exists $x_0\in K$ which is fixed by the conjugation action. This means that $x_0\in\pi([\RR_G])'=(\pi([\RR_G])'')'$ so $x_0$ belongs to the commutant of $\LL^\infty(X)\rtimes G$ by item (3) of Proposition \ref{prop:full group gen vna}, which concludes the proof.
\end{proof}

The proof of $(ii)\impl(i)$ actually shows that the von Neumann algebra generated by an amenable unitary group is injective. Note that de la Harpe proved that a von Neumann algebra is injective if and only if its unitary group is amenable \cite{MR548116}. Our proof of $(ii)\impl(i)$ is essentially a reformulation of his.

\appendix
\section{Haar measures for equivalence relations}

The content of this appendix is standard and can be carried out in a much more general setting (see \cite{MR1799683}). 
However, extracting the statements we need can be difficult, so we give complete proofs for which we claim no originality.

\subsection{Invariant Haar systems}

When $G=\Gamma$ is a discrete group, the first-coordinate projection $\pi:\mathcal R_\Gamma\to X$ has countable fibers, which allows us to define a \textbf{Haar measure} $M$ on $\mathcal R_\Gamma$ by integrating the counting measure over the fibers: for all Borel $A\subseteq\mathcal R_\Gamma$, $$M(A)=\int_X\abs{\pi\inv(\{x\})\cap A}d\mu(x).$$

The definition of a Haar measure in a more general context of locally compact groups is however more complicated. 

\begin{df}Let $\mathcal R$ be a Borel equivalence relation on $(X,\mu)$. An \textbf{invariant Haar system} on $\mathcal R$ is a family $(m_x)_{x\in X}$ of Borel measures on $X$ which satisfy the following properties:
\begin{enumerate}[(1)]
\item \label{condition: invariance}(\textit{invariance}) There is a full measure subset $X'$ of $X$ such that for all $(x,y)\in\mathcal R\cap(X'\times X')$, $m_x=m_y$. 
\item For all $x\in X$, $m_x$ is non trivial and supported on $[x]_{\mathcal R}$ (i.e. $m_x(X\setminus[x]_{\mathcal R})=0$ and $m_x([x]_\mathcal R)>0)$. 
\item (\textit{measurability}) For all Borel $A\subseteq \mathcal R$, the map $x\mapsto m_x(A_x)$ is Borel, where $A_x:=\{y\in X: (x,y)\in A\}$.
\item \label{cond:sigma finite}($\sigma$-\textit{finiteness}) There exists an exhausting increasing sequence of Borel subsets $(A_n)$ of $\mathcal R$ such that for all $n\in\N$, one has $\int_Xm_x((A_n)_x)d\mu(x)<+\infty$. 

\item \label{cond:haar full measure}For all full measure subsets $X'$ of $X$, one has  $m_x(X\setminus X')=0$ for $\mu$-almost all $x\in X$.
\end{enumerate}
\end{df}
\begin{rmq}Note that condition (\ref{cond:haar full measure}) allows one to transport a Haar system on $\mathcal R$ to a Haar system on $\mathcal R'$ whenever $\mathcal R$ and $\mathcal R'$ are orbit equivalent.  
\end{rmq}
\begin{exemple}
Suppose $\mathcal R$ is a Borel countable non-singular equivalence relation on $(X,\mu)$. Then an invariant Haar system on $\mathcal R$ is given by letting $m_x$ be the counting measure on $[x]_{\mathcal R}$. 
\end{exemple}
\begin{exemple}\label{ex: haar system from right haar on lc group}
Suppose $G$ is a locally compact Polish group with right Haar measure $\lambda$. Then given an essentially free measure-preserving $G$-action on $(X,\mu)$, one can endow $\mathcal R_G$ with an invariant Haar system $(\lambda_x)_{x\in X}$ given by the natural identification $g\mapsto g\cdot x$ between $(G,\lambda)$ and $[x]_{\mathcal R}$. In other words, given a Borel subset $A$ of $G$ and $x\in X$, we set $\lambda_x(A\cdot x):=\lambda(A)$. Note that such an identification only makes sense when $x$ belongs to the free part of the action, so when $x$ does not belong to it we define $\lambda_x$ to be the Dirac measure on $x$.

Let us check that the field of measure $(\lambda_x)_{x\in X}$ is an invariant Haar system. For a Borel subset $A$ of $G$, we have 
$$\lambda_{gx}(A\cdot x)=\lambda_{gx}(Ag\inv g\cdot x)=\lambda(Ag\inv)=\lambda(A)=m_x(A\cdot x),$$
so condition (1) is satisfied. One can easily check that conditions (2), (3) and (4) are satisfied, while (\ref{cond:haar full measure})  is a consequence of the Fubini theorem and the fact that the $G$-action preserves the measure: if $X'$ has full measure in $X$ then for almost all $x\in X$, for $\lambda$-almost $g\in G$ one has $g\cdot x\in X'$. 

Note that when $G$ is discrete, this definition of the Haar measure coincides with the previous one.
\end{exemple}

\begin{rmq}
  Actually, as the expert reader knows, one can define a Haar measure on $\mathcal R_G$ regardless of the freeness of the $G$-action, whenever $G$ is locally compact Polish. But since the construction of the measure is significantly more complicated, and since we will only deal with non-free actions when $G$ is discrete, we chose not to present this more general setting.
\end{rmq}

\begin{exemple}If $(\mathcal R_1,(m^1_x)_{x\in X})$ and $(\mathcal R_2,(m^2_y)_{y\in Y})$ are measured equivalence relations on $(X,\mu)$ and $(Y,\nu)$ respectively, then $(\mathcal R_1\times\mathcal R_2,(m^1_x\times m^2_y)_{(x,y)\in X\times Y})$ is a measured equivalence relation on $(X\times Y, \mu\times \nu)$. A particular case of interest to us is when $\mathcal R_1$ is the transitive equivalence relation and $\mathcal R_2$ is a countable measure-preserving equivalence relation. Indeed by Theorem \ref{thm:OEprod} every measured equivalence relation arising from a free action of a non discrete unimodular locally compact group is of this form. 
\end{exemple}

\begin{rmq}There can be a lot of different invariant Haar systems on an equivalence relation $\mathcal R$, even in the ergodic case. For instance, if $\mathcal R$ is the transitive equivalence relation on $(X,\mu)$, then any choice of Borel $\sigma$-finite measure $\nu$ on $X$ which is absolutely continuous with respect to $\mu$ yields an invariant Haar system $(m_x)_{x\in X}$ given by $m_x=\nu$. In the next section, we will add a condition which yields uniqueness: unimodularity.
\end{rmq}

By Weil's theorem,
a Polish group which admits a right-invariant measure is locally compact. Similarly, the existence of an invariant Haar system on an equivalence relation forces the acting group to be locally compact. Let a Polish group $G$ act freely on $(X,\mu)$, and suppose that there exists an invariant Haar system $(m_x)_{x\in X}$ on $\mathcal R_G$. Then we can define a natural right-invariant measure on $G$ as \[ \lambda(A):=\int_X m_x(A\cdot x)\d\mu(x),\ \text{for}\ A\subset G.\]
This measure is not always $\sigma$-finite\footnote{Let $\Gamma=\Z/2\Z$ act on $[0,1]$ via $T:x\mapsto (1-x)$ and take a $T$-invariant function $f:[0,1]\to [0,+\infty[$ which is not integrable, then $m_x=f(x)(\delta_x+\delta_{T(x)})$ is an invariant Haar system but the associated measure on $\Z/2\Z$ is infinite. }, but we now show how this can be circumvented.


\begin{thm}\label{thm: lc is closed under OE}Let $G$ be a Polish group acting freely on $(X,\mu)$ in a measure-preserving manner. If $\mathcal R_G$ has an invariant Haar system, then $G$ is locally compact.
\end{thm}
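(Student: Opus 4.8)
The plan is to produce a nonzero, right-invariant, $\sigma$-finite Borel measure on $G$ and then invoke Weil's theorem exactly as quoted just above the statement. The natural candidate is the measure already written down before the theorem, namely $\lambda(A)=\int_X m_x(A\cdot x)\,d\mu(x)$ for Borel $A\subseteq G$. Throughout I would work with the pushforward $\nu_x$ of $m_x$ to $G$ under the bijection $g\mapsto g\cdot x$ of $G$ with $[x]_{\mathcal R}$ (which makes sense because the action is free), so that $\nu_x(A)=m_x(A\cdot x)$ and $\lambda(A)=\int_X\nu_x(A)\,d\mu(x)$.

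First I would record the cocycle identity forced by the invariance axiom~(\ref{condition: invariance}): for a.e.\ $x$ and every $g\in G$ one has $m_x=m_{gx}$, which unwinds to $\nu_{gx}(A)=\nu_x(Ag)$. Right-invariance of $\lambda$ is then a one-line change of variables using that the $G$-action preserves $\mu$: $\lambda(Ag)=\int_X\nu_x(Ag)\,d\mu(x)=\int_X\nu_{gx}(A)\,d\mu(x)=\int_X\nu_y(A)\,d\mu(y)=\lambda(A)$. Nontriviality is immediate from axiom~(2), since $\nu_x(G)=m_x([x]_{\mathcal R})>0$ for every $x$, whence $\lambda(G)>0$.

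The crux is $\sigma$-finiteness, which genuinely can fail for $\lambda$ as defined (the footnote example). Two mechanisms must be separated. The harmless one is non-integrability in the $X$-direction, as exhibited in the footnote; this is removed by replacing $(m_x)_{x\in X}$ with the equivalent system $(\rho(x)m_x)_{x\in X}$ for a bounded strictly positive $G$-invariant Borel function $\rho\colon X\to(0,1]$. Invariance of $\rho$ preserves all five axioms and keeps $\lambda$ right-invariant, while boundedness lets one keep axiom~(\ref{cond:sigma finite}) for the new system. The essential mechanism is $\sigma$-finiteness along $G$, and here I would exploit axiom~(\ref{cond:sigma finite}) directly: the measure $\tilde M(A):=\int_X m_x(A_x)\,d\mu(x)$ on $\mathcal R_G$ is $\sigma$-finite, and under the Borel isomorphism $\mathcal R_G\cong X\times G$ coming from freeness it disintegrates as $d\nu_x(g)\,d\mu(x)$, with $\lambda$ equal to its projection to the $G$-factor. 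Since a projection of a $\sigma$-finite measure need not be $\sigma$-finite, this is precisely where right-invariance must be brought in: it suffices to produce a single Borel set $B\subseteq G$ with $0<\lambda(B)<\infty$ and a countable family $\{g_n\}$ with $G=\bigcup_n Bg_n$, for then right-invariance gives $\lambda(Bg_n)=\lambda(B)<\infty$ and $\lambda$ is $\sigma$-finite. A set of positive finite measure is extracted from a finite-mass piece of $\tilde M$, and the countable cover is furnished by second countability (the Lindel\"of property) of $G$.

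The step I expect to be the main obstacle is exactly this last one: converting the $\sigma$-finiteness of $\tilde M$ on $\mathcal R_G$ into $\sigma$-finiteness of its $G$-projection $\lambda$, i.e.\ ruling out that $\lambda$ (even after the invariant rescaling) degenerates into a purely infinite, $\{0,\infty\}$-valued measure, and producing a positive-finite-measure set whose translates cover $G$. This is where all five Haar-system axioms, and not merely invariance, have to be used in tandem. Everything else---the cocycle identity, right-invariance, nontriviality, and the invariant rescaling---is routine bookkeeping. Once a nonzero $\sigma$-finite right-invariant Borel measure on $G$ is in hand, Weil's theorem immediately yields that $G$ is locally compact, completing the proof.
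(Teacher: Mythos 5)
Your construction of $\lambda$, the verification of right-invariance and nontriviality, and the identification of $\sigma$-finiteness as the crux all match the paper's setup. But your proposed repair of $\sigma$-finiteness has a genuine gap, and it is fatal to the Weil-theorem route as you set it up: an $\mathcal R$-invariant rescaling $\rho$ is powerless exactly when it is needed. If the $G$-action is ergodic, every $G$-invariant Borel function is a.e.\ constant, so replacing $(m_x)$ by $(\rho(x)m_x)$ only multiplies $\lambda$ by a constant. Concretely, let $G=\Z$ act on $(X,\mu)$ by an ergodic measure-preserving transformation $T$ (an irrational rotation, say), pick a measurable $W\colon X\to(0,\infty)$ which is finite everywhere but not integrable, and set $m_x:=\sum_{n\in\Z}W(T^nx)\,\delta_{T^nx}$. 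All five axioms hold: invariance is immediate since $m_{T^kx}=m_x$, and $\sigma$-finiteness holds because under the identification $\mathcal R_\Z\cong X\times\Z$ the slice of the measure $A\mapsto\int_X m_x(A_x)\,d\mu(x)$ over $X\times\{n\}$ is $W(T^nx)\,d\mu(x)$, which is $\sigma$-finite as $W\circ T^n$ is finite; an increasing exhaustion is given by $A_N=\bigcup_{|n|\leq N}\{x: W(T^nx)\leq N\}\times\{n\}$. Yet $\lambda(\{n\})=\int_X W(T^nx)\,d\mu(x)=\int_X W\,d\mu=\infty$ for every $n$, so $\lambda$ is $\{0,\infty\}$-valued, and by ergodicity no invariant rescaling changes this. (Your fix does handle the paper's footnote example, but only because that $\Z/2\Z$-action is not ergodic.) So there is in general no set of positive finite $\lambda$-measure to be found; moreover, your appeal to Lindel\"of is unjustified in any case, since Lindel\"of applies to open covers and a Borel set of positive finite measure has no reason to have right translates containing open sets before local compactness is known. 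The example has $G$ discrete, so the theorem is not threatened; the point is that the implication you want to prove (axioms $\Rightarrow$ $\sigma$-finiteness of an exactly invariant $\lambda$) is simply false.

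This is precisely the obstruction the paper's proof is built to avoid, and it avoids it by sacrificing exact invariance rather than trying to save it. Using the exhaustion $(A_n)$ from the $\sigma$-finiteness axiom, the paper replaces each $m_x$ by the probability measure $\eta_x(A)=\sum_{n}2^{-(n+1)}\,m_x((A\cap A_n)_x)/m_x((A_n)_x)$. The normalizing constants $m_x((A_n)_x)$ are not constant along orbits, so invariance is lost, but the measure class survives: $[\eta_x]=[\eta_y]$ for a.e.\ equivalent $x,y$. Integrating gives a right-\emph{quasi}-invariant probability measure on $G$ (right translation preserves null sets), and one concludes by Mackey's theorem --- a Polish group carrying a nonzero $\sigma$-finite quasi-invariant Borel measure is locally compact --- instead of Weil's theorem. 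If you want to salvage your plan, the lesson is that finiteness and exact invariance cannot be repaired simultaneously: you must weaken invariance to quasi-invariance at the same moment you normalize, and then invoke Mackey rather than Weil.
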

\begin{proof}
We will show that there exists a non-trivial right-\textit{quasi}-invariant Borel probability measure on $G$. This implies that $G$ is locally compact by Mackey's theorem \cite[Thm. 7.1]{MR0089999}. 

Let $(m_x)$ be an invariant Haar system on $\mathcal R_G$, let $(A_n)$ be a partition of $\mathcal R_G$ into Borel sets of finite measure. We define a new Haar system $(\eta_x)$ of \textit{probability measures} on $\mathcal R_G$ by putting, for every $x\in X$ and Borel $A\subseteq X$, 
\[\eta_x(A):=\sum_{n=0}^{\infty}\frac 1{2^{n+1}}\frac{m_x((A\cap A_n)_x)}{m_x((A_n)_x)}\]
Then our Haar system satisfies all the axioms of invariant Haar systems except of course invariance (condition (\ref{condition: invariance})) , which can be replaced by
\begin{enumerate}[(1')]
\item (\textit{quasi-invariance}) There is a full measure subset $X'$ of $X$ such that for all $(x,y)\in \mathcal R\cap (X'\times X')$, $[\eta_x]=[\eta_y]$.
\end{enumerate}
As before, we can integrate the Haar system to obtain a probability measure on $G$: \[ \lambda(A):=\int_X \eta_x(A\cdot x)\d\mu(x),\ \text{for}\ A\subset G.\]

To complete the proof, we will show that $\lambda$ is quasi-invariant with respect to the right multiplication. For this, suppose that $\lambda(A)=0$, then by definition for almost all $x\in X$ one has $\eta_x(A\cdot x)=0$ which implies by $(1')$ that for every $g\in G$ and almost all $x\in X$, $\eta_{gx}(A\cdot x)=0$. Since moreover we have that $g_*\mu=\mu$, we can conclude the proof:
\[\lambda(Ag)=\int_X\eta_x(Agx)d\mu(x)=\int_X\eta_{g^{-1}x}(Ax)d\mu(x)=0.\qedhere\]
\end{proof}

\subsection{Unimodularity}

For a measured equivalence relation $\mathcal R$ on $(X,\mu)$, the \textbf{pre-orbit full group} $[\mathcal R]_B$ is the group of all Borel bijections $T:X\to X$ which preserve $\mu$, and such that for all $x\in X$, one has $(x,T(x))\in\mathcal R$. The pre-orbit full group has two natural actions on $\mathcal R$:
\begin{itemize}
\item the \textbf{left action} defined by $l_T(x,y)=(T(x),y)$ for all $(x,y)\in\mathcal R$ and
\item the \textbf{right action} defined by $r_T(x,y)=(x,T(y))$ for all $(x,y)\in\mathcal R$.
\end{itemize}
These two actions are conjugated by the \textbf{flip} $\sigma$ defined by $\sigma(x,y):=(y,x)$. 

For $A\subset \mathcal R$, as in the last section, we put $A_x=\{y\in X: (x,y)\in A\}$. 
Every invariant Haar system $(m_x)_{x\in X}$ allows us to equip $\mathcal R$ with a natural measure $M$ defined as follows
$$M(A):=\int_Xm_x(A_x)d\mu(x)\quad \text{for every } A\subseteq\RR \text{ Borel}.$$
Note that condition (\ref{cond:sigma finite}) on $(m_x)$ corresponds to the $\sigma$-finiteness of $(\mathcal R,M)$.

\begin{lem}
  The left action of the pre-full group on $\mathcal R$ preserves $M$. 
\end{lem}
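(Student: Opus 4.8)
The plan is to show directly that the pushforward measure $(l_T)_*M$ equals $M$, which is exactly what it means for $l_T$ to preserve $M$. Unravelling the definitions, for a Borel set $A\subseteq\mathcal R$ the fiber of $l_T^{-1}(A)$ over a point $x$ is
\[(l_T^{-1}(A))_x=\{y\in X:\ (T(x),y)\in A\}=A_{T(x)},\]
so that
\[M(l_T^{-1}(A))=\int_X m_x\bigl((l_T^{-1}(A))_x\bigr)\,d\mu(x)=\int_X m_x(A_{T(x)})\,d\mu(x).\]
The whole game is then to turn $m_x(A_{T(x)})$ into $m_{T(x)}(A_{T(x)})$ and to perform the change of variables $x\mapsto T(x)$.

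First I would record that $l_T$ is a well-defined Borel bijection of $\mathcal R$: if $(x,y)\in\mathcal R$ then $(x,T(x))\in\mathcal R$ by the definition of the pre-full group, so $(T(x),y)\in\mathcal R$ by transitivity; hence $l_T^{-1}(A)$ is Borel and condition (3) of the invariant Haar system (applied to $l_T^{-1}(A)$) guarantees that $x\mapsto m_x(A_{T(x)})$ is a Borel function, so the integrals above make sense. Next I would invoke the invariance condition (1): there is a full measure set $X'$ with $m_x=m_y$ for all $(x,y)\in\mathcal R\cap(X'\times X')$. Since $T_*\mu=\mu$, the set $X'\cap T^{-1}(X')$ still has full measure, and for every $x$ in it we have $(x,T(x))\in\mathcal R\cap(X'\times X')$, hence $m_x=m_{T(x)}$. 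Replacing $m_x$ by $m_{T(x)}$ inside the integral (legitimate as the two agree $\mu$-almost everywhere) gives
\[M(l_T^{-1}(A))=\int_X m_{T(x)}(A_{T(x)})\,d\mu(x).\]
Finally, applying $T_*\mu=\mu$ to the Borel function $z\mapsto m_z(A_z)$ yields $\int_X m_{T(x)}(A_{T(x)})\,d\mu(x)=\int_X m_z(A_z)\,d\mu(z)=M(A)$, which is precisely $(l_T)_*M=M$.

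The only genuinely delicate point, and the step I expect to be the main obstacle, is the interplay between the almost-everywhere nature of invariance and the change of variables: I must make sure the coincidence $m_x=m_{T(x)}$ holds on a set of full $\mu$-measure \emph{before} substituting, and that the hypothesis that $T$ is $\mu$-preserving is used twice, once to keep $X'\cap T^{-1}(X')$ of full measure and once to justify the substitution $x\mapsto T(x)$ in the integral. Everything else is routine bookkeeping with the fiber notation $A_x$; no property of the Haar system beyond invariance, measurability, and the $\sigma$-finiteness that makes $M$ a genuine measure is needed.
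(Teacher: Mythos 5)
Your proof is correct and is essentially the same computation as the paper's: the paper evaluates $M(l_T A)$ by inserting $m_{T^{-1}(x)}$ via invariance, identifying the fiber $(l_T A)_x = A_{T^{-1}(x)}$, and changing variables with $T_*\mu = \mu$, which is exactly your argument applied to $l_T^{-1}$ instead of $l_T$. Your additional care with the full-measure set $X' \cap T^{-1}(X')$ and the measurability of $x \mapsto m_x(A_{T(x)})$ just makes explicit what the paper leaves implicit.
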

\begin{proof}
For all Borel $A\subseteq X$ and all $T\in[\mathcal R]_B$, one has
\begin{align*}
M(l_TA)&=\int_Xm_x((l_TA)_x)d\mu(x)\\
&=\int_Xm_{T\inv (x)}((l_TA)_{x})d\mu(x)\\
&=\int_Xm_{T\inv (x)}((A)_{T\inv(x)})d\mu(x)\\
&=\int_Xm_x(A_x)d\mu(x)=M(A),\end{align*}
so the measure $M$ is preserved by the left action of the pre-orbit full group.
\end{proof}

Denote by $\Aut(\mathcal R,M)$ the group of measure-preserving Borel bijections of $\mathcal R$, two such bijections being identified up to measure zero. Then the left action defines a morphism $[\mathcal R ]_B\to\Aut(\mathcal R,M)$ which factors through the orbit full group $[\mathcal R]$. So the orbit full group (pre-) acts in a measure-preserving manner on $(\mathcal R,M)$. 

\begin{df}An invariant Haar system $(m_x)$ on a Borel equivalence relation $\mathcal R$ is called \textbf{unimodular} if the flip preserves $M$.
\end{df}

As the name suggests, free actions of unimodular locally compact groups give rise to unimodular Haar systems. 

\begin{prop}Let $G$ be a unimodular locally compact group acting essentially freely on $(X,\mu)$ and let $\lambda$ be a Haar measure on $G$. Then the associated invariant Haar system $(\lambda_x)_{x\in X}$ on $\mathcal R_G$ given by Example \ref{ex: haar system from right haar on lc group} is unimodular.
\end{prop}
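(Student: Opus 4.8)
The plan is to transport everything to the coordinates $G\times X$ via the free-action identification and reduce the claim to a change-of-variables computation. Since the action is essentially free, the Borel map $\Psi\colon G\times X\to\mathcal R_G$, $\Psi(g,x):=(x,g\cdot x)$, is a bijection onto (the free part of) $\mathcal R_G$. Unwinding the definitions of $\lambda_x$ and of $M$ and applying Tonelli, for a Borel $A\subseteq\mathcal R_G$ with $\tilde A:=\Psi\inv(A)$ and $x$-section $\tilde A^x:=\{g\in G:(g,x)\in\tilde A\}$, one has $A_x=\tilde A^x\cdot x$, whence $\lambda_x(A_x)=\lambda(\tilde A^x)$ and
\[M(A)=\int_X\lambda_x(A_x)\,\d\mu(x)=\int_X\lambda(\tilde A^x)\,\d\mu(x)=(\lambda\times\mu)(\tilde A).\]
Thus $\Psi$ is measure-preserving from $(G\times X,\lambda\times\mu)$ to $(\mathcal R_G,M)$ (this is exactly the content of Theorem \ref{thm:OEprod}(ii) for the Haar data), so it suffices to show that the conjugate $\tilde\sigma:=\Psi\inv\circ\sigma\circ\Psi$ of the flip preserves $\lambda\times\mu$.

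Next I would compute this conjugate explicitly. Since $\sigma\Psi(g,x)=\sigma(x,g\cdot x)=(g\cdot x,x)$, and $\Psi(g\inv,g\cdot x)=(g\cdot x,g\inv g\cdot x)=(g\cdot x,x)$, freeness gives
\[\tilde\sigma(g,x)=(g\inv,\,g\cdot x);\]
that is, the flip becomes simultaneous inversion of the group coordinate and translation of the space coordinate.

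Finally I would check directly that $\tilde\sigma$ preserves $\lambda\times\mu$. For a nonnegative Borel $f$ on $G\times X$,
\begin{align*}
\int_{G\times X}f(\tilde\sigma(g,x))\,\d(\lambda\times\mu)
&=\int_X\int_G f(g\inv,g\cdot x)\,\d\lambda(g)\,\d\mu(x)\\
&=\int_X\int_G f(g,g\inv\cdot x)\,\d\lambda(g)\,\d\mu(x)\\
&=\int_G\int_X f(g,g\inv\cdot x)\,\d\mu(x)\,\d\lambda(g)\\
&=\int_G\int_X f(g,x)\,\d\mu(x)\,\d\lambda(g),
\end{align*}
where the second equality uses the invariance of $\lambda$ under $g\mapsto g\inv$, the third is Tonelli (justified by the $\sigma$-finiteness of $M$, equivalently of $\lambda\times\mu$), and the fourth uses that $x\mapsto g\inv\cdot x$ preserves $\mu$ for each fixed $g$. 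This gives $\int f\circ\tilde\sigma\,\d(\lambda\times\mu)=\int f\,\d(\lambda\times\mu)$, so $\tilde\sigma$, and hence $\sigma$, is measure-preserving. The one genuine input is the inversion-invariance of $\lambda$ used in the second equality, and this is the main point: it holds precisely because $G$ is unimodular, so the modular function is trivial. For a non-unimodular $G$ the modular function would survive this change of variables, and the flip would fail to preserve $M$; so unimodularity is used exactly here and nowhere else.
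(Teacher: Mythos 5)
Your proof is correct and takes essentially the same route as the paper's: both identify $(\mathcal R_G,M)$ with $(G\times X,\lambda\times\mu)$ via the free action, compute that the flip becomes $(g,x)\mapsto(g^{-1},g\cdot x)$ in these coordinates, and verify invariance using Fubini/Tonelli, the $\mu$-invariance of the $G$-action, and the inversion-invariance of $\lambda$ coming from unimodularity. The only cosmetic difference is that the paper checks the identity on rectangles $A\times B$ and concludes by uniqueness of the product measure, whereas you integrate a general nonnegative Borel function.
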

\begin{proof}
By Lemma \ref{lem:free part is Borel} we may assume that $G$ acts freely. Let $\Phi:X\times G\to \mathcal R_G$ the Borel identification given by $\Phi(x,g):=(x,g\cdot x)$. By definition, the measure $M$ on $\mathcal R_G$ obtained by $M(A)=\int_X\lambda_x(A_x)d\mu(x)$ is just the product measure, $\Phi_*(\mu\otimes\lambda)=M$. Therefore in order to show that $(\lambda_x)_x$ is unimodular, we need to show that the map 
\begin{align*}
\Psi:=&\Phi\circ\sigma\circ \Phi\inv: (X\times G, \mu\otimes \lambda)\to (X\times G,\mu\otimes \lambda)
\end{align*}
is measure preserving. Observe that $\Psi(x,g)=(gx,g^{-1})$. For a set $C$, let $\chi_C$ denote its characteristic function. Let $A\subseteq X$ and $B\subseteq G$ be Borel sets, then we have
\begin{align*}
\Psi_*(\mu\otimes\lambda)(A\times B)&=\int_{X\times G}\chi_{A\times B}(g\cdot x,g\inv)d\mu\otimes\lambda(g,x)\\
&=\int_{X\times G}\chi_{A}(g\cdot x)\chi_B(g\inv)d\mu\otimes\lambda(g,x)\\
&=\int_G\chi_B(g\inv)\left(\int_X\chi_A(g\cdot x)d\mu(x)\right)d\lambda(g)\\
&=\int_G\chi_B(g\inv)\mu(A)d\lambda(g)\\
&=\lambda(B)\mu(A),
\end{align*}
where the last three equalities are respectively consequences of Fubini's theorem, the fact that $G$ preserves the measure and  the unimodularity of $G$. By uniqueness of the product measure, we conclude that $\Psi_*(\mu\times\lambda)=\mu\times\lambda$ as desired. 
\end{proof}

\begin{rmq}
Let $G$ be a unimodular locally compact Polish group acting essentially freely on $(X,\mu)$, let $\lambda$ be a Haar measure on $G$ and let $(\lambda_x)$ be the associated unimodular invariant Haar system on $\mathcal R_G$. Then by the above proposition the right $[\mathcal R_G]$-action on $(\mathcal R_G,M)$ gives an embedding $[\mathcal R_G]\into \LL^0(X,\mu,\Aut(G,\lambda))$ (in other words, the full group acts on every $\mathcal R_G$-class in a measure-preserving manner.). In particular, if $(Y,\nu)$ is a standard $\sigma$-finite space, then the group $\LL^0(X,\mu,\Aut(Y,\nu))$ contains as a closed subgroup every orbit full group arising from a measure-preserving free action of a non-discrete unimodular Polish locally compact group. For a similar statement in the discrete case, see  \cite[Prop. 13]{KLMamplegen}.
\end{rmq}

\begin{thm}\label{thm:haaruni}
  Let $G$ be a Polish group acting freely on $(X,\mu)$. If there is a unimodular invariant Haar system $(m_x)_{x\in X}$ on $\mathcal R_G$, then $G$ is locally compact unimodular.
  
If the action is moreover ergodic, then there exists a constant $c>0$ such that for almost all $x\in X$, one has $m_x=c\lambda_x$, where $\lambda_x$ is the invariant Haar system associated to a fixed Haar measure $\lambda$ on  $G$.
\end{thm}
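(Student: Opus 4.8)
The plan is to prove the two assertions in turn, working throughout with the measure $M$ on $\mathcal R_G$ attached to the Haar system $(m_x)$ and the identification $\Phi\colon X\times G\to\mathcal R_G$, $\Phi(x,g)=(x,g\cdot x)$, under which $M$ pulls back to $d\mu(x)\,d\mu_x(g)$, where $\mu_x$ is the pushforward of $m_x$ to $G$ along $g\mapsto g\cdot x$. Local compactness of $G$ is immediate from Theorem \ref{thm: lc is closed under OE}, so the first real task is unimodularity of $G$. Here I would exploit both actions of the full group on $\mathcal R_G$: the Lemma above gives that the left action preserves $M$, and since by hypothesis the flip $\sigma$ preserves $M$ and conjugates the two actions ($r_T=\sigma l_T\sigma$), the right action preserves $M$ as well. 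Specialising to the elements $h\in G$, which lie in $[\mathcal R_G]_B$, the map $r_h$ reads in the above coordinates as $(x,g)\mapsto(x,hg)$ and preserves $d\mu(x)\,d\mu_x(g)$; interpreting this fibrewise shows that for $\mu$-almost every $x$ the measure $\mu_x$ is invariant under left translation, whence $\mu_x=c(x)\lambda_L$ for a fixed left Haar measure $\lambda_L$ by uniqueness of Haar measure.

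Next I would feed in the invariance axiom of the Haar system. Writing $m_x=m_{gx}$ in coordinates, and using $A\cdot gx=Ag\cdot x$, gives $\mu_{gx}(A)=\mu_x(Ag)$; combined with $\mu_x=c(x)\lambda_L$ and the definition $\lambda_L(Ag)=\Delta(g)\lambda_L(A)$ of the modular function $\Delta$, this yields the cocycle relation $c(gx)=\Delta(g)\,c(x)$ for a.e. $x$ and every $g$. To force $\Delta\equiv 1$, I would push the probability measure $\mu$ forward under $\log c$ to a probability measure $\theta$ on $\mathbb R$; since $\mu$ is $G$-invariant, the equality $\log c(gx)=\log c(x)+\log\Delta(g)$ shows that $\theta$ is invariant under translation by $\log\Delta(g)$ for every $g$. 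As no probability measure on $\mathbb R$ is invariant under a nonzero translation, we get $\Delta\equiv 1$, i.e. $G$ is unimodular.

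For the ergodic statement, once $\Delta\equiv 1$ the cocycle relation degenerates to $c(gx)=c(x)$, so $c$ is $G$-invariant and therefore almost everywhere equal to a constant $c>0$ by ergodicity. Since $G$ is now unimodular we may take $\lambda_L=\lambda$, and then $m_x(A\cdot x)=\mu_x(A)=c\,\lambda(A)=c\,\lambda_x(A\cdot x)$, that is $m_x=c\,\lambda_x$ for almost every $x$, as required.

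The step I expect to be the main obstacle is the fibrewise (disintegration) argument: passing from ``$r_h$ preserves $M$ for each fixed $h$'', which a priori only yields left-invariance of $\mu_x$ for $\mu$-almost every $x$ along an $h$-dependent conull set, to ``$\mu_x$ is genuinely a left Haar measure for almost every $x$''. I would resolve this using second countability of $G$, testing invariance against a countable dense family of translations $h$ and of continuous compactly supported functions and then invoking continuity of translation, while checking that each $\mu_x$ is $\sigma$-finite (Radon) so that the uniqueness of Haar measure genuinely applies.
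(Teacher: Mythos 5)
Your proof follows the same skeleton as the paper's: local compactness via Theorem \ref{thm: lc is closed under OE}, transfer of $M$-invariance from the left to the right action by the flip, fibrewise invariance giving $\mu_x=c(x)\lambda$, the cocycle identity $c(gx)=\Delta(g)c(x)$, and ergodicity at the end. Your argument that $\Delta\equiv 1$ (pushing $\mu$ forward under $\log c$ and noting that no probability measure on $\R$ is invariant under a nonzero translation) is a correct, clean substitute for the paper's Poincar\'e recurrence argument; both are equivalent uses of the finiteness of $\mu$.

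However, the step you yourself single out as the main obstacle is resolved incorrectly, and this is a genuine gap. The Haar system axioms give only $\sigma$-finiteness of the fibre measures $\mu_x$, not local finiteness, and your parenthetical ``$\sigma$-finite (Radon)'' conflates the two: there is no way to verify a priori that $\mu_x$ is finite on compact sets, and the continuity-of-translation argument against compactly supported continuous functions needs exactly that. Worse, the implication you want is simply false at this level of generality: the counting measure on $\Q\subset\R$ is $\sigma$-finite and invariant under the dense subgroup $\Q$, yet it is not translation-invariant and not a multiple of Lebesgue measure. So invariance of $\mu_x$ under a countable dense family of translations cannot, by itself, force $\mu_x$ to be a Haar multiple. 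The paper's resolution avoids regularity entirely: apply Fubini to $\lambda\times\mu$ on $G\times X$ to upgrade ``for every $g$, for a.e.\ $x$, $g_*\mu_x=\mu_x$'' to ``for a.e.\ $x$, for $\lambda$-a.e.\ $g$, $g_*\mu_x=\mu_x$''; then observe that $H_x:=\{g\in G: g_*\mu_x=\mu_x\}$ is a \emph{subgroup}, and a conull subgroup of a locally compact Polish group is the whole group (any $g$ satisfies $gH_x\cap H_x\neq\emptyset$, hence $g\in H_xH_x^{-1}=H_x$; this is \cite[Prop. B.1]{MR776417}, as cited in the paper). Once $\mu_x$ is invariant under \emph{all} of $G$, uniqueness of Haar measure does apply with only $\sigma$-finiteness (its Fubini proof needs nothing more). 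Replacing your density-plus-continuity argument by this Fubini-plus-conull-subgroup argument repairs the proof; everything else you wrote goes through.
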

\begin{proof}
First note that by Theorem \ref{thm: lc is closed under OE}, $G$ has to be locally compact. We fix a left-invariant Borel probability measure $\lambda$ on $G$. 
For every $x$ in the free part of the action, consider the $G$-equivariant bijective Borel map $\phi_x:[x]_{\mathcal R}\to G$ defined by $\phi_x(y)\cdot x=y$; then the pushforward measure $\eta_x:=(\phi_{x})_*m_x$ is a $\sigma$-finite measure on $G$. 

Since the right action of the orbit full group $[\mathcal R]$ on $(\mathcal R,M)$ is conjugate to the left action by the flip, unimodularity yields that the right action of $[\mathcal R]$ on $\mathcal R$ preserves $M$. In particular, the right action of $G$ on $\mathcal R$ preserves $M$, so for a fixed $g\in G$, and any $A\subseteq\mathcal R$ we have
\begin{align*}
\int_Xm_x(A_x)d\mu(x)=\int_Xm_x((r(g)A)_x)d\mu(x)=\int_Xm_x(gA_x)d\mu(x).
\end{align*}
By the uniqueness of disintegration, this implies that for almost all $x\in X$, $g_*m_x=m_x$. Then by Fubini's theorem, for almost all $x\in X$ and $\lambda$-almost all $g\in G$, $g_*m_x=m_x$. Since $\phi_x:[x]_\mathcal R\to G$ is left $G$-equivariant, this implies that for almost all $x\in X$, there is a full measure subgroup of $G$ which preserves $\eta_x$ when acting on the left.  But every full measure subgroup of $G$ equates $G$ (see e.g. \cite[Prop. B.1]{MR776417}),  so for almost all $x\in X$, one has that $\eta_x$ is a Borel $\sigma$-finite left-invariant measure on $G$. By uniqueness of the Haar measure, we conclude that for almost all $x\in X$, the measure $\eta_x$ is a multiple of $\lambda$. 

Fix a Borel subset $K$ of $G$ such that $\lambda(K)=1$. For all $x\in X$, we let $c_x=\eta_x(K)$, then $x\mapsto c_x$ is Borel and we have $\eta_x=c_x\lambda$. Moreover for all $(g,x)\in G\times X$,
\[c_{gx}=\eta_{gx}(K)=m_{gx}(Kgx)=m_{x}(Kgx)=\eta_x(Kg),\]
so that $c_{gx}=\Delta(g)c_x$, where $\Delta$ is the modular function on $G$.


Let $g\in G$, let $a>0$ be an essential value of the function $x\mapsto c_x$ and consider the set of positive measure $A:=\{x\in X: a/2<c_x<3a/2\}$. By Poincaré's recurrence theorem for almost all $x\in A$ there is an infinite subset $S_x\subset \N$ such that $g^kx\in A$ for every $k\in S_x$. So for $x\in A$ we have that \[a/2<\Delta(g^k)c_x=\Delta(g)^kc_x<3a/2\quad \text{ for all }k\in S_x,\] which implies that $\Delta(g)=1$, and we conclude that $G$ is unimodular. 

Therefore $c_{gx}=c_x$ and the function $x\mapsto c_x$ is $G$-invariant. So whenever the $G$-action is ergodic, $c_x$ is a.s. constant, which yields the second part of the theorem. 


\end{proof}

Let us point out that when the acting group $G$ is already known to be locally compact, the freeness hypothesis above can be replaced by almost freeness, since we know by Lemma \ref{lem:free part is Borel} that the free part of the action is a Borel set and invariant Haar systems restrict well to full measure Borel subsets. So unimodular locally compact groups form a closed class under orbit equivalence among locally compact groups.

\bibliographystyle{alpha}
\bibliography{/Users/francoislemaitre/Dropbox/Maths/biblio}

\end{document}